\newtheorem{thm}{Theorem}[section]
\newtheorem{prop}[thm]{Proposition}
\newtheorem{cor}[thm]{Corollary}
\newtheorem{lem}[thm]{Lemma}
\newtheorem{defn}[thm]{Definition}
\newtheorem{remark}[thm]{Remark}
\theoremstyle{definition}
\newcommand{\comment}[1]{}
\numberwithin{equation}{section}
\def\lsim{\raisebox{-1ex}{$~\stackrel{\textstyle <}{\sim}~$}}
\theoremstyle{definition}
\begin{document}
\title{Dual of Hardy-amalgam spaces and norms inequalities}
\author[Z.V.P. Abl\'e]{Zobo Vincent de Paul Abl\'e}
\address{Laboratoire de Math\'ematiques Fondamentales, UFR Math\'ematiques et Informatique, Universit\'e F\'elix Houphou\"et-Boigny Abidjan-Cocody, 22 B.P 582 Abidjan 22. C\^ote d'Ivoire}
\email{{\tt vincentdepaulzobo@yahoo.fr}}
\author[J. Feuto]{Justin Feuto}
\address{Laboratoire de Math\'ematiques Fondamentales, UFR Math\'ematiques et Informatique, Universit\'e F\'elix Houphou\"et-Boigny Abidjan-Cocody, 22 B.P 1194 Abidjan 22. C\^ote d'Ivoire}
\email{{\tt justfeuto@yahoo.fr}}

\subjclass{42B30, 46E30, 42B35, 42B20, 47B06}
\keywords{Amalgam spaces, Hardy-Amalgam spaces, Duality, Calder\'on-Zygmund operator, Convolution operator, Riesz potential operators.}

\date{}
\begin{abstract}
We characterize the dual spaces of the generalized Hardy spaces defined by replacing Lebesgue quasi-norms by Wiener amalgam ones. In these generalized Hardy spaces, we prove that some classical linear operators such as Calder\'on-Zygmund, convolution and Riesz potential operators are bounded.
\end{abstract}

\maketitle

\section{Introduction}
Let $\varphi\in\mathcal C^\infty(\mathbb R^d)$ with support on $B(0,1)$ such that $\int_{\mathbb R^d}\varphi dx=1$, where $B(0,1)$ is the unit open ball centered at $0$ and $\mathcal C^\infty(\mathbb R^d)$ denotes the space of infinitely differentiable complex valued functions on $\mathbb R^d$. For $t>0$, we denote by $\varphi_t$ the dilated function $\varphi_t(x)=t^{-d}\varphi(x/t)$, $x\in\mathbb R^d$. The Hardy-amalgam spaces $\mathcal H^{(q,p)}$ ( $0<q, p<\infty$) introduced in \cite{AbFt}, are a generalization of the classical Hardy spaces $\mathcal H^q$ in the sense that 
they are the spaces of tempered distributions $f$ such that the maximal function
\begin{equation}
\mathcal M_{\varphi}(f):=\sup_{t>0}|f\ast\varphi_t| \label{maximal}
\end{equation}
belongs to the Wiener amalgam spaces $(L^q,\ell^p)(\mathbb R^d)$. 

We recall that for $0<p,q\leq\infty$,  a locally integrable function $f$ belongs to the amalgam space $(L^q,\ell^p)$ if 
$$\left\|f\right\|_{q,p}:=\left\|\left\{\left\|f\chi_{_{Q_k}}\right\|_{q}\right\}_{k\in\mathbb{Z}^d}\right\|_{\ell^{p}}<\infty,$$ 
where $Q_k=k+\left[0,1\right)^{d}$ for $k\in\mathbb Z^d$.
Endowed with the (quasi)-norm $\left\|\cdot\right\|_{q,p}$, the amalgam space $(L^q,\ell^p)$ is a complete space, and a Banach space for $1\leq q, p\leq+\infty$. These spaces coincide with the Lebesgue spaces whenever $p=q$, the class of spaces decreases with respect to the local component and increases with respect to the global component. Hardy-amalgam spaces are equal to $(L^q,\ell^p)$ spaces for $1<p,q<\infty$ and the dual space of $(L^q,\ell^p)$ is known to be $(L^{q'},\ell^{p'})$ under the assumption that $1\leq q,p<\infty$ where $\frac{1}{q}+\frac{1}{q'}=1$ and $\frac{1}{p}+\frac{1}{p'}=1$ with the usual conventions (see \cite{BDD}, \cite{RBHS}, \cite{FSTW}, \cite{FH} and \cite{JSTW}).

We also know that the Hardy-Littlewood maximal operator $\mathfrak{M}$ defined for a locally integrable function $f$ by 
\begin{eqnarray}
\mathfrak{M}(f)(x)=\underset{r>0}\sup\ |B(x,r)|^{-1}\int_{B(x,r)}|f(y)|dy,\ \ x\in\mathbb{R}^d.\label{maxop}
\end{eqnarray} 
is bounded in $(L^q,\ell^p)$ ($1<q,p<\infty$) (see \cite{CLHS} Theorems 4.2 and 4.5). 

As for the classical Hardy spaces, not only the definition does not depend on the particular function $\varphi$, but the Hardy-amalgam spaces can be characterized in terms of grand maximal function, and we can also replace the regular function $\varphi$ by the Poisson kernel. The Hardy-Lorentz spaces $H^{q,\alpha}$ and $H^{p,s}$ defined by Wael Abu-Shammala and Alberto Torchinsky in \cite{AT}, are continuously embedded in $\mathcal H^{(q,p)}$ provided $q\leq p$, $\alpha\leq q$ and $s\leq p$. In the case $0<q\leq\min(1,p)$ and $0<p<\infty$, we proved in \cite{AbFt} that $\mathcal H^{(q,p)}$ has an atomic characterization, with atoms which are exactly those of classical Hardy space. 

It is also well known (see \cite{EMS})  that in the classical Hardy spaces Riesz transforms $R_j$ ($j=1,\ldots,d$), convolution operators such as Riesz potential and Calder\'on-Zygmund type operators are bounded in $\mathcal H^p$ under appropriate conditions.

The aim of this paper is to characterize the dual of $\mathcal H^{(q,p)}$ spaces when $0<q\leq p\leq 1$, 
and to extend some of these results and others, known in the case of classical $\mathcal{H}^q$ spaces to Hardy-amalgam spaces $\mathcal{H}^{(q,p)}$, when $0<q\leq 1$ and $q\leq p<+\infty$, just as it is done for other generalizations of $\mathcal{H}^q$ (see \cite{MBOW}, \cite{Gc}, \cite{NEYS}, \cite{NEYS1} and \cite{SAY}). For this purpose, we organize this paper as follows. 

In Section 2, we recall some properties of Wiener amalgam spaces $(L^q,\ell^p)$ and Hardy-amalgam spaces $\mathcal{H}^{(q,p)}$ obtained in \cite{AbFt} that we will need. We characterize the dual spaces of $\mathcal{H}^{(q,p)}$ ($0<q\leq p\leq 1$) in Section 3. The last section, consisting of three subsections, is devoted to some applications of the results of Sections 2 and 3. In the first two subsections, we respectively study the boundedness of Calder\'on-Zygmund and convolution operators in $\mathcal{H}^{(q,p)}$ spaces when $0<q\leq 1$ and $q\leq p<+\infty$. In the last subsection, we prove that the Riesz potential operators $I_\alpha$ extend to bounded operators from $\mathcal{H}^{(q,p)}$ to $\mathcal{H}^{(q_1,p_1)}$ when $0<\alpha<d$, $0<q\leq 1$; $q\leq p<+\infty$; $0<q_1\leq p_1<+\infty$; $1/{q_1}=1/q-\alpha/d$ and $1/{p_1}=1/p-\alpha/d$. 

Throughout the paper, we always let $\mathbb{N}=\left\{1,2,\ldots\right\}$ and $\mathbb{Z}_{+}=\left\{0,1,2,\ldots\right\}$. We use $\mathcal S := \mathcal S(\mathbb R^{d})$ to denote the Schwartz class of rapidly decreasing smooth functions equipped with the topology defined by the family of norms $\left\{\mathcal{N}_{m}\right\}_{m\in\mathbb{Z}_{+}}$, where for all $m\in\mathbb{Z}_{+}$ and $\psi\in\mathcal{S}$, $$\mathcal{N}_{m}(\psi):=\underset{x\in\mathbb R^{d}}\sup(1 + |x|)^{m}\underset{|\beta|\leq m}\sum|{\partial}^\beta \psi(x)|$$ with $|\beta|=\beta_1+\ldots+\beta_d$, ${\partial}^\beta=\left(\partial/{\partial x_1}\right)^{\beta_1}\ldots\left(\partial/{\partial x_d}\right)^{\beta_d}$ for all $\beta=(\beta_1,\ldots,\beta_d)\in\mathbb{Z}_{+}^d$ and $|x|:=(x_1^2+\ldots+x_d^2)^{1/2}$. The dual space of $\mathcal S$ is the space of tempered distributions denoted by $\mathcal S':= \mathcal S'(\mathbb R^{d})$ equipped with the weak-$\ast$ topology. If $f\in\mathcal{S'}$ and $\theta\in\mathcal{S}$, we denote the evaluation of $f$ on $\theta$ by $\left\langle f,\theta\right\rangle$. The letter $C$ will be used for non-negative constants independent of the relevant variables that may change from one occurrence to another. When a constant depends on some important parameters $\alpha,\gamma,\ldots$, we denote it by $C(\alpha,\gamma,\ldots)$. Constants with subscript, such as $C_{\alpha,\gamma,\ldots}$, do not change in different occurrences but depend on the parameters mentioned in it. We propose the following abbreviation $\mathrm{\bf A}\lsim \mathrm{\bf B}$ for the inequalities $\mathrm{\bf A}\leq C\mathrm{\bf B}$, where $C$ is a positive constant independent of the main parameters. If $\mathrm{\bf A}\lsim \mathrm{\bf B}$ and $\mathrm{\bf B}\lsim \mathrm{\bf A}$, then we write $\mathrm{\bf A}\approx \mathrm{\bf B}$. For any given (quasi-) normed spaces $\mathcal{A}$ and $\mathcal{B}$ with the corresponding (quasi-) norms $\left\|\cdot\right\|_{\mathcal{A}}$ and $\left\|\cdot\right\|_{\mathcal{B}}$, the symbol $\mathcal{A}\hookrightarrow\mathcal{B}$ means that if $f\in\mathcal{A}$, then $f\in\mathcal{B}$ and $\left\|f\right\|_{\mathcal{B}}\lsim\left\|f\right\|_{\mathcal{A}}$.

For $\lambda>0$ and a cube $Q\subset\mathbb R^{d}$ (by a cube we mean a cube whose edges are parallel to the coordinate axes), we write $\lambda Q$ for the cube with same center as $Q$ and side-length $\lambda$ times side-length of $Q$, while $\left\lfloor \lambda \right\rfloor$ stands for the greatest integer less or equal to $\lambda$. Also, for $x\in\mathbb R^{d}$ and $\ell>0$, $Q(x,\ell)$ will denote the cube centered at $x$ and side-length $\ell$. We use the same notations for balls. For a measurable set $E\subset\mathbb R^d$, we denote by $\chi_{_{E}}$ the characteristic function of $E$ and $\left|E\right|$ for its Lebesgue measure. 
 
Throughout this paper, without loss of generality and unless otherwise specified, we assume that cubes are closed and denote by $\mathcal{Q}$ the set of all cubes.

{\bf Acknowledgement.} The authors are very grateful to Aline Bonami for her support and to the referee for his meticulous reading of the manuscript.
\section{Prerequisites on Wiener amalgam and Hardy-amalgam spaces } 

\subsection{Wiener amalgam spaces $(L^q,\ell^p)$} 



Let  $0<q<1$ and $0<p\leq 1$. We have the following well known reverse Minkowski's inequality for $(L^q, \ell^p)$. We give a proof for the reader's convenience.

\begin{prop} \label{InverseHoldMink3}
Let $0<q<1$ and $0<p\leq 1$. For all finite sequence $\left\{f_n\right\}_{n=0}^m$ of elements of $(L^q, \ell^p)$, we have
\begin{eqnarray}
\sum_{n=0}^m\left\|f_n\right\|_{q,p}\leq\left\|\sum_{n=0}^m|f_n|\right\|_{q,p}. \label{InverseHoldMink4}
\end{eqnarray}
\end{prop}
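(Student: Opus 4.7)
The strategy is to reduce the claim to two instances of the classical reverse Minkowski inequality, one for $L^q$ applied locally on each unit cube $Q_k$, and one for $\ell^p$ applied to the resulting sequence of local $L^q$-norms. Both ingredients take the form: for $0<r<1$ and nonnegative functions (respectively, sequences), $\sum_{n} \|g_n\|_r \le \|\sum_n |g_n|\|_r$.

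The plan is as follows. First I would fix $k\in\mathbb{Z}^d$ and apply the reverse Minkowski inequality in $L^q(Q_k)$ (valid since $0<q<1$) iteratively to the functions $\{f_n\chi_{Q_k}\}_{n=0}^m$ to obtain
\[
\sum_{n=0}^m \|f_n\chi_{Q_k}\|_q \;\le\; \Bigl\|\sum_{n=0}^m |f_n|\chi_{Q_k}\Bigr\|_q.
\]
Setting $a_n(k):=\|f_n\chi_{Q_k}\|_q$ and $b(k):=\|\sum_{n=0}^m|f_n|\chi_{Q_k}\|_q$, this reads $\sum_{n=0}^m a_n(k)\le b(k)$ pointwise in $k$. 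Because all entries are nonnegative, monotonicity of the $\ell^p$ quasi-norm gives $\|\sum_{n=0}^m a_n\|_{\ell^p}\le\|b\|_{\ell^p}$.

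Second, I would apply the reverse Minkowski inequality in $\ell^p$ (trivially true, with equality, when $p=1$, and standard when $0<p<1$) to the nonnegative sequences $\{a_n\}_{n=0}^m$, giving $\sum_{n=0}^m\|a_n\|_{\ell^p}\le\|\sum_{n=0}^m a_n\|_{\ell^p}$. Chaining everything yields
\[
\sum_{n=0}^m\|f_n\|_{q,p}=\sum_{n=0}^m\|a_n\|_{\ell^p}\le \Bigl\|\sum_{n=0}^m a_n\Bigr\|_{\ell^p}\le \|b\|_{\ell^p}=\Bigl\|\sum_{n=0}^m|f_n|\Bigr\|_{q,p},
\]
which is \eqref{InverseHoldMink4}.

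Each of the two reverse Minkowski inequalities can itself be obtained in one line from the reverse H\"older inequality with dual exponent $r/(r-1)<0$: for nonnegative $f,g$ and $0<r<1$, $\|f+g\|_r^r=\int(f+g)^{r-1}f+\int(f+g)^{r-1}g\ge \|f+g\|_r^{r-1}(\|f\|_r+\|g\|_r)$, and the $\ell^p$ version is proved identically. There is no real obstacle; the only point deserving care is the order in which the two reductions are carried out — the $L^q$ step must be applied first on each $Q_k$, and the $\ell^p$ step afterwards on the resulting sequences, not the reverse — so that one is always summing nonnegative quantities at the level where reverse Minkowski is being invoked.
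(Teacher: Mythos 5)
Your proposal is correct and follows essentially the same route as the paper: reduce to the reverse Minkowski inequality in $L^q$ on each unit cube $Q_k$ and in $\ell^p$ for the resulting sequences $a_n=\left\{\left\|f_n\chi_{_{Q_k}}\right\|_q\right\}_{k}$, then chain the two via monotonicity of the $\ell^p$ quasi-norm. The paper writes the same chain of inequalities (treating $p=1$ as a separate, immediate case), so there is nothing substantive to add.
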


\begin{proof}
Let $0<q<1$, $0<p\leq 1$ and a finite sequence $\left\{f_n\right\}_{n=0}^m$ of elements of $(L^q, \ell^p)$. 

For $p=1$, it is immediate that 
\begin{equation*}
\sum_{n=0}^m\left\|f_n\right\|_{q,1}
\leq\sum_{k\in\mathbb{Z}^d}\left\|\sum_{n=0}^m|f_n\chi_{_{Q_k}}|\right\|_q
=\left\|\sum_{n=0}^m|f_n|\right\|_{q,1},
\end{equation*}
by the reverse Minkowski's inequality in $L^q$ (see \cite{LGkos}, 1.1.5 (b), pp. 11-12). 

Let $0<p<1$ and set 
$a_n:=\left\{\left\|f_n\chi_{_{Q_k}} \right\|_q\right\}_{k\in\mathbb{Z}^d}$, for $n=0,1,\ldots,m$. We have 
\begin{eqnarray*}
\sum_{n=0}^m\left\|f_n\right\|_{q,p}&=&\sum_{n=0}^m\left\|a_n\right\|_{\ell^p}
\leq\left\|\left\{\sum_{n=0}^m\left\|f_n\chi_{_{Q_k}}\right\|_q\right\}_{k\in\mathbb{Z}^d}\right\|_{\ell^p}\\
&\leq&\left\|\left\{\left\|\sum_{n=0}^m|f_n\chi_{_{Q_k}}|\right\|_q\right\}_{k\in\mathbb{Z}^d}\right\|_{\ell^p}
=\left\|\sum_{n=0}^m|f_n|\right\|_{q,p},
\end{eqnarray*}
by the reverse Minkowski's inequality in $\ell^p$ and $L^q$. 
This completes the proof. 
\end{proof}

Notice that by monotone convergence theorem, (\ref{InverseHoldMink4}) holds for any sequence in $(L^q, \ell^p)$, for $0<q<1$ and $0<p\leq 1$.



Another very useful result is the following. 

\begin{prop}[\cite{LSUYY}, Proposition 11.12] \label{operamaxima}
Let $1<p,u\leq+\infty$ and $1<q<+\infty$. Then, for all sequence of measurable functions $\left\{f_n\right\}_{n\geq 0}$,
\begin{eqnarray*}
\left\|\left(\underset{n=0}{\overset{+\infty}\sum}\left[\mathfrak{M}(f_n)\right]^{u}\right)^{\frac{1}{u}}\right\|_{q,p}\approx\left\|\left(\underset{n=0}{\overset{+\infty}\sum}|f_n|^{u}\right)^{\frac{1}{u}}\right\|_{q,p}, 
\end{eqnarray*}
with the implicit equivalent positive constants independent of $\left\{f_n\right\}_{n\geq 0}$ .
\end{prop}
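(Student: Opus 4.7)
The easy direction $\gsim$ is immediate: since $|f_n(x)|\le \mathfrak{M}(f_n)(x)$ almost everywhere (at the Lebesgue points of each $f_n$), the pointwise inequality
\[
\left(\sum_{n\ge 0}|f_n(x)|^u\right)^{1/u}\le \left(\sum_{n\ge 0}[\mathfrak{M}(f_n)(x)]^u\right)^{1/u}
\]
holds a.e., and taking $(L^q,\ell^p)$-norms yields half of the equivalence. All the work goes into the reverse inequality.

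For the $\lsim$ direction, my plan is a localize-then-globalize argument built on top of the classical scalar Fefferman--Stein vector-valued maximal inequality in $L^q(\mathbb{R}^d)$, valid for $1<q,u<\infty$. Fix $k\in\mathbb Z^d$ and decompose each $f_n$ as $f_n=g_n^k+\sum_{j\ge 1}h_n^{k,j}$, where $g_n^k:=f_n\chi_{3Q_k}$ and $h_n^{k,j}:=f_n\chi_{3^{j+1}Q_k\setminus 3^jQ_k}$. On the local piece, the scalar Fefferman--Stein inequality applied on $\mathbb R^d$ gives
\[
\left\|\Bigl(\sum_n[\mathfrak{M}(g_n^k)]^u\Bigr)^{1/u}\chi_{Q_k}\right\|_q\lsim \left\|\Bigl(\sum_n|f_n|^u\Bigr)^{1/u}\chi_{3Q_k}\right\|_q .
\]
For the far-field pieces, the standard geometric observation is that for $x\in Q_k$ and $j\ge 1$ any ball centered at $x$ that meets $3^{j+1}Q_k\setminus 3^jQ_k$ has radius comparable to $3^j$, so that
\[
\mathfrak{M}(h_n^{k,j})(x)\lsim 3^{-jd}\int_{3^{j+1}Q_k}|f_n(y)|\,dy.
\]
Applying Minkowski's inequality in $\ell^u$ (which is a norm since $u>1$) to move $(\sum_n \cdot^u)^{1/u}$ past the integral, and then Hölder in $L^q$, one can dominate $\bigl(\sum_n[\mathfrak{M}(h_n^{k,j})(x)]^u\bigr)^{1/u}$ by $3^{-jd}\bigl|3^{j+1}Q_k\bigr|^{1/q'}\bigl\|(\sum_n|f_n|^u)^{1/u}\chi_{3^{j+1}Q_k}\bigr\|_q$ for $x\in Q_k$, which in turn is controlled by $\mathfrak{M}\bigl(G\chi_{Q_k}\text{-style expressions}\bigr)$ where $G:=(\sum_n|f_n|^u)^{1/u}$.

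The final step is to take the $\ell^p(\mathbb Z^d)$-norm in $k$. Using the quasi-triangle inequality in $\ell^p$ and the fact that the tail factors $3^{-j}$ produce a summable geometric series in $j$ (because $q>1$ gives a favorable power of $|3^{j+1}Q_k|^{1/q'-1}=3^{-jd/q}$), one reduces the global estimate to
\[
\|G\|_{q,p}+\|\mathfrak{M}(G)\|_{q,p}\lsim \|G\|_{q,p},
\]
which is exactly the boundedness of $\mathfrak{M}$ on $(L^q,\ell^p)$ for $1<q,p<\infty$ (quoted from the paper). The main obstacle is the tail bookkeeping: one has to sum over $j\ge 1$ with the correct power of $3^{-j}$ \emph{before} taking the $\ell^p$-norm in $k$, and verify that the resulting operator dominating the right-hand side is controlled by $\mathfrak{M}$ applied to $G$. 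A cleaner alternative, which I would use if convergence issues become delicate, is to invoke Rubio de Francia extrapolation: the Fefferman--Stein inequality holds in $L^q(w)$ for every $w\in A_q$, and $(L^q,\ell^p)$ is a Banach function space on which both $\mathfrak{M}$ and its dual are bounded, so the vector-valued inequality transfers to $(L^q,\ell^p)$ automatically.
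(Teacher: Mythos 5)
First, a point of comparison: the paper does not prove this proposition at all --- it is quoted from \cite{LSUYY}, Proposition 11.12 --- so any proof you supply is extra work rather than a match or mismatch with an argument in the text. Your easy direction is fine, but the main localize-then-globalize argument contains a genuine gap. The decomposition $\mathfrak{M}(f_n)\le\mathfrak{M}(g_n^k)+\sum_{j\ge1}\mathfrak{M}(h_n^{k,j})$ is fatally lossy: already for $f_n\equiv 1$ one has $\mathfrak{M}(h_n^{k,j})(x)\ge c_d>0$ for every $x\in Q_k$ and every $j\ge 1$ (a ball centered at $x$ of radius $3^{j+1}\sqrt d$ contains the annulus $3^{j+1}Q_k\setminus 3^jQ_k$, which occupies a fixed proportion of it), so the right-hand side is identically $+\infty$ while $\mathfrak{M}(f_n)\equiv 1$. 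The same defect appears quantitatively in your tail bookkeeping: the coefficient $3^{-jd}\,|3^{j+1}Q_k|^{1/q'}=C\,3^{-jd/q}$ is exactly cancelled by the growth of the other factor $\|(\sum_n|f_n|^u)^{1/u}\chi_{3^{j+1}Q_k}\|_q$, which for $G:=(\sum_n|f_n|^u)^{1/u}=\chi_{B(0,R)}$ is of order $3^{jd/q}$ for all $3^j\le R$. The series over $j$ is therefore not a convergent geometric series; summing up to $j\approx\log_3 R$ produces a loss of order $\log R$ against $\|G\|_{q,p}$, and no uniform bound follows. The standard repair changes the structure of the argument: one must keep the supremum over radii rather than converting it into a sum over annuli. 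For $x\in Q_k$ one has $\mathfrak{M}(f_n\chi_{(3Q_k)^c})(x)\lesssim\sup_{j\ge1}3^{-jd}\int_{3^{j+1}Q_k}|f_n(y)|\,dy$, and after Minkowski in $\ell^u$ and H\"older on unit cubes the global piece is dominated by the \emph{discrete} Hardy--Littlewood maximal operator acting on the sequence $k\mapsto\|G\chi_{Q_k}\|_q$, whose boundedness on $\ell^p(\mathbb{Z}^d)$ for $p>1$ closes the estimate; the genuinely delicate point, which your sketch does not address, is interchanging the $\ell^u$-sum in $n$ with this supremum in $j$.

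Your fallback via Rubio de Francia extrapolation is the cleaner viable route for $1<p<+\infty$ and $1<u<+\infty$: the weighted Fefferman--Stein inequality in $L^{q}(w)$ for $w\in A_{q}$ together with extrapolation into Banach function spaces on which $\mathfrak{M}$ and its associate-space counterpart are bounded does transfer the inequality to $(L^q,\ell^p)$. But it does not cover the endpoints admitted by the statement: $u=+\infty$ must be treated separately (easy, via $\sup_n\mathfrak{M}(f_n)\le\mathfrak{M}(\sup_n|f_n|)$ and the boundedness of $\mathfrak{M}$ on $(L^q,\ell^p)$), and $p=+\infty$ is not automatic, since the associate space of $(L^q,\ell^\infty)$ is $(L^{q'},\ell^1)$, on which $\mathfrak{M}$ is \emph{not} bounded. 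As written, then, the proposal does not establish the proposition; the first argument fails and the second covers only part of the stated range.
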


\subsection{Hardy-amalgam spaces $\mathcal{H}^{(q,p)}$} 

Let $0<q,p<+\infty$ and $\varphi\in\mathcal{S}$ with $\text{supp}(\varphi)\subset B(0,1)$ and $\int_{\mathbb{R}^d}\varphi(x)dx=1$. 
The Hardy-amalgam spaces $\mathcal{H}^{(q,p)}$ are Banach spaces whenever $1\leq q,p<+\infty$ and quasi-Banach spaces otherwise (see \cite{AbFt}, Proposition 3.8, p. 1912). 

From now on, we assume that $0<q\leq 1$ and $q\leq p<+\infty$. Also, for simplicity, we just denote $\mathcal{M}_{\varphi}$ by $\mathcal{M}_{0}$. We recall the following definition which is also the one of an atom for $\mathcal{H}^{q}$.

\begin{defn}\label{defhqpatom}
Let $1<r\leq+\infty$ and $s\geq\left\lfloor d\left(\frac{1}{q}-1\right)\right\rfloor$ be an integer. A function $\textbf{a}$ is a $(q,r,s)$-atom on $\mathbb{R}^d$ for $\mathcal{H}^{(q,p)}$ if there exists a cube $Q$ such that  
\begin{enumerate}
\item $\text{supp}(\textbf{a})\subset Q$ ,
\item $\left\|\textbf{a}\right\|_r\leq|Q|^{\frac{1}{r}-\frac{1}{q}}$ ; \label{defratom1}
\item $\int_{\mathbb{R}^d}x^{\beta}\textbf{a}(x)dx=0$, for all multi-indexes $\beta$ with $|\beta|\leq s$ . \label{defratom2}
\end{enumerate}
\end{defn}

 We denote by $\mathcal{A}(q,r,s)$ the set of all $(\textbf{a},Q)$ such that $\textbf{a}$ is a $(q,r,s)$-atom and $Q$ is the associated cube (with respect to Definition \ref{defhqpatom}). As it was proved in 
(\cite{AbFt}, Theorems 4.3 and 4.4), we have the following atomic decomposition theorem.

\begin{thm} 
\label{thafonda} 
Let $0<\eta\leq 1$, $\delta\geq\left\lfloor d\left(\frac{1}{q}-1\right)\right\rfloor$ be an integer and $f\in\mathcal S'$. Then, $f\in\mathcal H^{(q,p)}$ if and only if there exist a sequence $\left\{(\textbf{a}_n, Q^n)\right\}_{n\geq 0}$ in $\mathcal{A}(q,\infty,\delta)$ and a sequence of scalars $\left\{{\lambda}_n\right\}_{n\geq 0}$ such that 
\begin{eqnarray*}
\left\|\sum_{n\geq 0}\left(\frac{|\lambda_n|}{\left\|\chi_{_{Q^n}}\right\|_{q}}\right)^{\eta}\chi_{_{Q^{n}}}\right\|_{\frac{q}{\eta},\frac{p}{\eta}}<+\infty, 
\end{eqnarray*}
and $f=\sum_{n\geq 0}{\lambda}_n\textbf{a}_n$ in $\mathcal{S'}$ and $\mathcal{H}^{(q,p)}$.

Moreover, we have 
\begin{eqnarray*}
\left\|f\right\|_{\mathcal{H}^{(q,p)}}\approx\inf\left\|\sum_{n\geq 0}\left(\frac{|\lambda_n|}{\left\|\chi_{_{Q^n}}\right\|_{q}}\right)^{\eta}\chi_{_{Q^{n}}}\right\|_{\frac{q}{\eta},\frac{p}{\eta}}^{\frac{1}{\eta}}.
\end{eqnarray*}
where the infimum is taken over all decompositions of $f$ in sum of multiple of elements of $\mathcal A(q;\infty,\delta)$.
\end{thm}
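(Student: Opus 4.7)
The plan is to prove the biconditional in two halves, following the strategy developed in \cite{AbFt} for Theorems 4.3 and 4.4.

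\textbf{Sufficiency.} The engine is the standard pointwise estimate on the maximal function of a $(q,\infty,\delta)$-atom $\textbf{a}$ supported in a cube $Q$, namely
$$\M_{0}(\textbf{a})(x) \lsim \|\chi_{Q}\|_{q}^{-1}\bigl[\mathfrak{M}(\chi_{Q})(x)\bigr]^{\theta_{0}}, \qquad \theta_{0} := \frac{d+\delta+1}{d},$$
obtained by using $\|\textbf{a}\|_{\infty}\leq |Q|^{-1/q}$ for $x$ near $Q$ and by subtracting the order-$\delta$ Taylor polynomial of $\varphi_{t}$ around the centre of $Q$ for $x$ away from $Q$. The constraint $\delta\geq \lfloor d(1/q-1)\rfloor$ forces $\theta_{0}>1/q$, providing the margin needed for the maximal inequality. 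Combining the sublinearity of $\M_{0}$ with the elementary $\eta$-subadditivity $(\sum |u_{n}|)^{\eta}\leq \sum |u_{n}|^{\eta}$ (valid since $\eta\leq 1$) applied to $f=\sum_{n}\lambda_{n}\textbf{a}_{n}$, one gets the pointwise bound
$$[\M_{0}(f)(x)]^{\eta}\lsim \sum_{n}\Bigl(\frac{|\lambda_{n}|}{\|\chi_{Q^{n}}\|_{q}}\Bigr)^{\eta}\bigl[\mathfrak{M}(\chi_{Q^{n}})(x)\bigr]^{\eta\theta_{0}}.$$
Taking $(L^{q/\eta},\ell^{p/\eta})$-quasi-norms on both sides, setting $u:=\eta\theta_{0}$ and $g_{n}:=(|\lambda_{n}|/\|\chi_{Q^{n}}\|_{q})^{\eta/u}\chi_{Q^{n}}$ (so that $[\mathfrak{M}(g_{n})]^{u}$ reproduces each summand by homogeneity), Proposition \ref{operamaxima} with outer exponents $uq/\eta=q\theta_{0}>1$ and $up/\eta=p\theta_{0}>1$ removes the maximal operator and yields
$$\|\M_{0}(f)\|_{q,p}^{\eta}\lsim \Bigl\|\sum_{n}\Bigl(\frac{|\lambda_{n}|}{\|\chi_{Q^{n}}\|_{q}}\Bigr)^{\eta}\chi_{Q^{n}}\Bigr\|_{q/\eta,p/\eta}.$$
Convergence of the series in $\mathcal{S}'$ and in $\H^{(q,p)}$ is inherited from the same inequality applied to tails.

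\textbf{Necessity.} Starting from $f\in\H^{(q,p)}$ and using the equivalence of $\M_{0}$ with the grand maximal function $\M_{N}$ (\cite{AbFt}), I would run the Calder\'on--Zygmund construction adapted to $\M_{N}$. Introduce the dyadic level sets $\Omega_{k}:=\{\M_{N} f>2^{k}\}$, perform a Whitney decomposition $\Omega_{k}=\bigcup_{i}Q_{i}^{k}$, and take a smooth partition of unity $\{\zeta_{i}^{k}\}$ subordinate to a mild dilation of the $Q_{i}^{k}$. Define the bad parts $b_{i}^{k}:=(f-P_{i}^{k})\zeta_{i}^{k}$, where $P_{i}^{k}$ is the $L^{2}(\zeta_{i}^{k}\,dx)$-orthogonal projection of $f$ onto polynomials of degree $\leq\delta$; these rescale to $(q,\infty,\delta)$-atoms $\textbf{a}_{i}^{k}$ with coefficients $|\lambda_{i}^{k}|\lsim 2^{k}\|\chi_{Q_{i}^{k}}\|_{q}$. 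The good parts $g_{k}:=f-\sum_{i}b_{i}^{k}$ satisfy $\|g_{k}\|_{\infty}\lsim 2^{k}$ and tend to $0$ in $\mathcal{S}'$ as $k\to -\infty$, producing $f=\sum_{k,i}\lambda_{i}^{k}\textbf{a}_{i}^{k}$ in $\mathcal{S}'$.

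\textbf{Main obstacle.} The crux is the quantitative matching
$$\Bigl\|\sum_{k,i}\Bigl(\frac{|\lambda_{i}^{k}|}{\|\chi_{Q_{i}^{k}}\|_{q}}\Bigr)^{\eta}\chi_{Q_{i}^{k}}\Bigr\|_{q/\eta,p/\eta}^{1/\eta}\lsim \|\M_{N}f\|_{q,p}.$$
The Whitney property gives $\sum_{i}\chi_{Q_{i}^{k}}\lsim \chi_{\Omega_{k}}$ with bounded overlap, so after substituting the bound on $|\lambda_{i}^{k}|$ the left-hand side collapses to $\|\sum_{k}2^{k\eta}\chi_{\Omega_{k}}\|_{q/\eta,p/\eta}^{1/\eta}$. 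This last quantity is controlled by $\|\M_{N}f\|_{q,p}$ through a layer-cake comparison performed locally inside each amalgam cell $Q_{m}=m+[0,1)^{d}$ and then summed in $\ell^{p}$. The fact that the amalgam quasi-norm treats its local $L^{q}$ and global $\ell^{p}$ components differently is precisely what forces this cell-by-cell layer-cake, in contrast to the classical $\H^{q}$ argument; this is the main technical point carried out in \cite{AbFt}, to which I would refer rather than reproduce it here.
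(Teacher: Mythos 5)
First, a point of reference: this paper does not actually prove Theorem \ref{thafonda}; it imports it verbatim from \cite{AbFt} (Theorems 4.3 and 4.4), so there is no in-text argument to measure your proposal against. That said, your outline follows the same standard route as the cited proof: for sufficiency, the pointwise majorization $\mathcal{M}_0(\textbf{a})\lsim\left\|\chi_{Q}\right\|_q^{-1}\left[\mathfrak{M}(\chi_{Q})\right]^{\theta_0}$ with $\theta_0=\frac{d+\delta+1}{d}$ followed by the vector-valued maximal inequality; for necessity, the Calder\'on--Zygmund construction on the level sets of the grand maximal function. In the necessity half, note that the final comparison is in fact the pointwise identity $\sum_k 2^{k\eta}\chi_{\Omega_k}\approx\left[\mathcal{M}_N f\right]^{\eta}$ (a geometric series in $k$), so no cell-by-cell layer-cake inside the amalgam blocks is actually required there.

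There is, however, one concrete flaw in your sufficiency step. Proposition \ref{operamaxima} requires the \emph{inner} exponent to exceed $1$ as well as the outer ones; in your application that inner exponent is $u=\eta\theta_0$, and you verify $q\theta_0>1$ and $p\theta_0>1$ but not $u>1$. When $\eta\leq d/(d+\delta+1)$ one has $u\leq1$, and the $\ell^u$-valued Fefferman--Stein inequality is false in that range, so the step as written does not go through for all $0<\eta\leq1$. The clean repair is to run the argument only at $\eta=1$: writing $\sum_n c_n\left[\mathfrak{M}(\chi_{Q^n})\right]^{\theta_0}=\sum_n\left[\mathfrak{M}\left(c_n^{1/\theta_0}\chi_{Q^n}\right)\right]^{\theta_0}$ with $c_n=|\lambda_n|/\left\|\chi_{Q^n}\right\|_q$ and applying Proposition \ref{operamaxima} with inner exponent $\theta_0>1$ and outer exponents $q\theta_0, p\theta_0>1$ gives $\left\|\mathcal{M}_0(f)\right\|_{q,p}\lsim\left\|\sum_n c_n\chi_{Q^n}\right\|_{q,p}$. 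One then passes to general $\eta$ via the elementary pointwise inequality $\sum_n c_n\chi_{Q^n}\leq\left(\sum_n c_n^{\eta}\chi_{Q^n}\right)^{1/\eta}$ (valid since $1/\eta\geq1$), which yields $\left\|\sum_n c_n\chi_{Q^n}\right\|_{q,p}\leq\left\|\sum_n c_n^{\eta}\chi_{Q^n}\right\|_{q/\eta,p/\eta}^{1/\eta}$ and restores the full range $0<\eta\leq1$. With that adjustment your sketch is consistent with the argument of \cite{AbFt}.
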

Notice that in this theorem, one can replace $(q,\infty,\delta)$-atoms by $(q,r,\delta)$-atoms, provided that $\max(p,1)<r<\infty$ and $0<\eta<q$ (see\cite{AbFt}, Theorem 4.6).

Let $1<r\leq+\infty$ and $\delta\geq\left\lfloor d\left(\frac{1}{q}-1\right)\right\rfloor$ be an integer. 
For simplicity, we denote by $\mathcal{H}_{fin}^{(q,p)}$ the subspace of $\mathcal{H}^{(q,p)}$ consisting of finite linear combinations of $(q,r,\delta)$-atoms. Then, $\mathcal{H}_{fin}^{(q,p)}$ is a dense subspace of $\mathcal{H}^{(q,p)}$ under the quasi-norm $\left\|\cdot\right\|_{\mathcal{H}^{(q,p)}}$ (see \cite{AbFt}, Remark 4.7, pp. 1922-1923). 
 
For given $(q,r,\delta)$-atoms and $f$ belonging to the corresponding $\mathcal{H}_{fin}^{(q,p)}$ space, we put: 
\begin{eqnarray*}
\left\|f\right\|_{\mathcal{H}_{fin}^{(q,p)}}:=\inf\left\{\left\|\sum_{n=0}^m\left(\frac{|\lambda_n|}{\left\|\chi_{_{Q^n}}\right\|_{q}}\right)^{\eta}\chi_{_{Q^{n}}}\right\|_{\frac{q}{\eta},\frac{p}{\eta}}^{\frac{1}{\eta}}: f=\sum_{n=0}^m{\lambda}_n\mathfrak{a}_n,\ m\in\mathbb{Z}_{+}\right\}, 
\end{eqnarray*}
if $r=+\infty$ and $0<\eta\leq 1$ is a fix real number, and 
\begin{eqnarray*}
\left\|f\right\|_{\mathcal{H}_{fin}^{(q,p)}}:=\inf\left\{\left\|\sum_{n=0}^m\left(\frac{|\lambda_n|}{\left\|\chi_{_{Q^n}}\right\|_{q}}\right)^{\eta}\chi_{_{Q^{n}}}\right\|_{\frac{q}{\eta},\frac{p}{\eta}}^{\frac{1}{\eta}}: f=\sum_{n=0}^m{\lambda}_n\mathfrak{a}_n,\ m\in\mathbb{Z}_{+}\right\}, 
\end{eqnarray*}
if $\max\left\{p,1\right\}<r<+\infty$ and $0<\eta<q$ a fix real number. 
The infimum in both definitions, are taken over all finite decompositions of $f$ using $(q,r,\delta)$-atom $\mathfrak{a}_n$ supported on the cube $Q^n$ and $\left\|\cdot\right\|_{\mathcal{H}_{fin}^{(q,p)}}$ defines a quasi-norm on $\mathcal{H}_{fin}^{(q,p)}$.


With these definitions, we obtained the following result.

\begin{thm}[\cite{AbFt}, Theorem 4.9]  \label{thafondamfini}
Let $\delta\geq\left\lfloor d\left(\frac{1}{q}-1\right)\right\rfloor$ be an integer and $\max\left\{p,1\right\}<r\leq+\infty$. 
\begin{enumerate}
\item If $r<+\infty$ and $0<\eta<q$, then  $\left\|\cdot\right\|_{\mathcal{H}^{(q,p)}}$ and $\left\|\cdot\right\|_{\mathcal{H}_{fin}^{(q,p)}}$ are equivalent on $\mathcal{H}_{fin}^{(q,p)}$. \label{thafondamfini01}
\item If $r=+\infty$ and $0<\eta\leq 1$, then, $\left\|\cdot\right\|_{\mathcal{H}^{(q,p)}}$ and $\left\|\cdot\right\|_{\mathcal{H}_{fin}^{(q,p)}}$ are equivalent on $\mathcal{H}_{fin}^{(q,p)}\cap\mathcal{C}(\mathbb{R}^d)$, where $\mathcal{C}(\mathbb{R}^d)$ denotes the space of continuous complex functions on $\mathbb{R}^d$. \label{thafondamfini02}
\end{enumerate}
\end{thm}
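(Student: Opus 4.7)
The plan is to establish the equivalence in both directions. The inequality $\|f\|_{\mathcal{H}^{(q,p)}}\lsim \|f\|_{\mathcal{H}^{(q,p)}_{fin}}$ is immediate from Theorem~\ref{thafonda} (together with its $(q,r,\delta)$-atom variant cited after it), since any finite atomic decomposition of $f$ is a particular case of the admissible decompositions over which the infimum defining $\|f\|_{\mathcal{H}^{(q,p)}}$ is taken. I therefore concentrate on the reverse inequality, which is the substance of the theorem.

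My strategy for the reverse direction follows the Meda--Sj\"ogren--Vallarino scheme adapted to the amalgam setting. Fix $f\in\mathcal{H}^{(q,p)}_{fin}$ (assumed continuous in case (2)) with $\|f\|_{\mathcal{H}^{(q,p)}}=1$, and observe that $f$ has compact support, so $\mathcal{M}_0(f)$ is bounded and decays at infinity. I first invoke the Calder\'on--Zygmund/Whitney construction underlying Theorem~\ref{thafonda}, applied to the level sets $\Omega_k:=\{\mathcal{M}_0(f)>2^k\}$, to produce an atomic decomposition
\[
f=\sum_{k\in\mathbb{Z}}\sum_{i}\lambda_{k,i}\,a_{k,i}\quad\text{in }\mathcal{S}',
\]
where each $a_{k,i}$ is a $(q,\infty,\delta)$-atom supported in a cube $Q^{k,i}\subset\Omega_k$, $|\lambda_{k,i}|\lsim 2^{k}\|\chi_{Q^{k,i}}\|_{q}$, and the associated $(q/\eta,p/\eta)$-amalgam quasi-norm is comparable to $\|f\|_{\mathcal{H}^{(q,p)}}$.

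Next I split the decomposition at a threshold $k_0$: write $f=g+h$ with $g:=\sum_{k\leq k_0}\sum_{i}\lambda_{k,i}a_{k,i}$ and $h:=\sum_{k>k_0}\sum_{i}\lambda_{k,i}a_{k,i}$. Since $\mathcal{M}_0(f)$ decays at infinity, for $k_0$ large enough $\bigcup_{k>k_0}\Omega_k$ is contained in a bounded region and only finitely many atoms survive in $h$, whose contribution to $\|\cdot\|_{\mathcal{H}^{(q,p)}_{fin}}$ is therefore directly controlled by the atomic quasi-norm estimate. For the low-level part $g$, the pointwise bound $\sum_{i}|\lambda_{k,i}a_{k,i}|\lsim 2^{k}\chi_{\Omega_k}$ on each level combined with a geometric sum in $k\leq k_0$ yields $\|g\|_{\infty}\lsim 2^{k_0}$; the supports are contained in a single fixed cube $Q_0$ determined by $\supp f$ and the localization $Q^{k,i}\subset\Omega_k$, and the vanishing moments of $g$ follow by termwise integration against polynomials of degree $\leq\delta$. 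After rescaling, $g$ becomes (up to a constant $\lsim 2^{k_0}\|\chi_{Q_0}\|_{q}$) either a single $(q,\infty,\delta)$-atom in case (2), or a single $(q,r,\delta)$-atom in case (1), and is absorbed into a single term of the finite decomposition.

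The main obstacle is to rigorously justify that $g$, defined a priori only as the formal $\mathcal{S}'$-limit of the low-level partial sums, is actually a bounded measurable function for which the pointwise estimate holds everywhere on $Q_0$. This is precisely where the continuity assumption in case (2) is essential: without it, the partial sums can only be controlled almost everywhere, whereas the construction of a genuine $(q,\infty,\delta)$-atom requires an honest uniform bound. In case (1), this difficulty is sidestepped because $(q,r,\delta)$-atoms with $r<+\infty$ are permitted, and an $L^{r}$-bound on $g$ (obtained from the $L^{\infty}$ estimate on the fixed cube $Q_{0}$) is sufficient. A subsidiary technical point is the verification that the splitting $f=g+h$ is valid in $\mathcal{S}'$, which follows from the absolute summability of the atomic series guaranteed by the normalization $\|f\|_{\mathcal{H}^{(q,p)}}=1$ together with Proposition~\ref{InverseHoldMink3}.
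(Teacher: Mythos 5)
The paper itself offers no proof of this statement: Theorem \ref{thafondamfini} is imported verbatim from \cite{AbFt} (Theorem 4.9), so there is nothing in-document to compare against, and your proposal must stand on its own. Your easy direction is fine, and the overall Meda--Sj\"ogren--Vallarino scheme is indeed the right one, but your treatment of the high-level part $h=\sum_{k>k_0}\sum_i\lambda_{k,i}a_{k,i}$ contains a genuine error. Even though $\bigcup_{k>k_0}\Omega_k$ is contained in a fixed dilate $Q_0^*$ of $\supp f$ once $2^{k_0}$ exceeds $\sup_{x\notin Q_0^*}\mathcal{M}_0f(x)$, the sum $h$ is \emph{not} finite: there may be infinitely many levels $k>k_0$ (certainly in case (1), where $f$ need not be bounded), and the Whitney--Calder\'on--Zygmund decomposition of each open set $\Omega_k$ produces infinitely many cubes accumulating at $\partial\Omega_k$. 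Reducing $h$ to a finite partial sum plus a remainder that is a \emph{small multiple of a single atom} is precisely the heart of the theorem, and it is there --- not in the low part $g$ --- that the case distinction enters: for $r<+\infty$ the tail is small in $L^r(Q_0^*)$ (via the majorant $\sum_k 2^k\chi_{\Omega_k}\lsim\mathcal{M}_0f\in L^r(Q_0^*)$ and dominated convergence), hence a small $(q,r,\delta)$-atom; for $r=+\infty$ one needs the tail small in $L^\infty$, which requires uniform convergence of the partial sums, and this is where continuity of $f$ is actually used. Your proposal instead locates the role of continuity in making $g$ a genuine bounded function; but $g$ is bounded a.e.\ by $C2^{k_0}$ in both cases, and since the atom condition $\left\|\textbf{a}\right\|_\infty\leq|Q|^{-1/q}$ is an essential-supremum bound, an a.e.\ estimate suffices. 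That is not where the difficulty lies.

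Two further points need repair. First, $k_0$ cannot merely be ``large enough'': the low part contributes a single atom on $Q_0^*$ with coefficient $\approx 2^{k_0}\left\|\chi_{Q_0^*}\right\|_q$, hence a contribution $\approx 2^{k_0}\left\|\chi_{Q_0^*}\right\|_{q,p}$ to $\left\|f\right\|_{\mathcal{H}_{fin}^{(q,p)}}$; for this to be $\lsim\left\|f\right\|_{\mathcal{H}^{(q,p)}}=1$ with a constant independent of $f$ you must first prove the quantitative off-support bound $\sup_{x\notin Q_0^*}\mathcal{M}_0f(x)\lsim\left\|\chi_{Q_0^*}\right\|_{q,p}^{-1}$ and choose $2^{k_0}$ comparable to that threshold; this estimate is missing from your argument. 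Second, the support of $g$ does not follow from ``$Q^{k,i}\subset\Omega_k$'': for $k\leq k_0$ the level sets $\Omega_k$ are large and the cubes $Q^{k,i}$ are not confined to any fixed cube. One obtains $\supp g\subset Q_0^*$ only by writing $g=f-h$ and combining $\supp f\subset Q_0$ with $\supp h\subset\overline{\Omega_{k_0+1}}\subset Q_0^*$. With the tail argument supplied, the quantitative choice of $k_0$ justified, and the support of $g$ derived correctly, your outline would become a correct proof.
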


Furthermore, by \cite{AbFt}, Lemma 4.10, p. 1929, if $\mathcal{H}_{fin}^{(q,p)}$ is the subspace of $\mathcal{H}^{(q,p)}$ consisting of finite linear combinations of $(q,\infty,\delta)$-atoms, then $\mathcal{H}_{fin}^{(q,p)}\cap\mathcal{C}(\mathbb{R}^d)$ is dense in $\mathcal{H}^{(q,p)}$ with respect to the quasi-norm $\left\|\cdot\right\|_{\mathcal{H}^{(q,p)}}$, since $\mathcal{H}_{fin}^{(q,p)}$ is dense in $\mathcal{H}^{(q,p)}$ in the quasi-norm $\left\|\cdot\right\|_{\mathcal{H}^{(q,p)}}$.
  
We recall the following definition of a molecule.  

\begin{defn}[\cite{AbFt}, Definition 5.1] \label{defhqpatomolec}
Let $1<r\leq+\infty$ and $\delta\geq\left\lfloor d\left(\frac{1}{q}-1\right)\right\rfloor$ be an integer. A measurable function $\textbf{m}$ is a $(q,r,\delta)$-molecule centered at a cube $Q$ if it satisfies the following conditions: 
\begin{enumerate}
\item $\left\|\textbf{m}\chi_{_{\widetilde{Q}}}\right\|_r\leq|Q|^{\frac{1}{r}-\frac{1}{q}}$ , where $\widetilde{Q}:=2\sqrt{d}Q$, \label{defratom11molec}
\item $|\textbf{m}(x)|\leq|Q|^{-\frac{1}{q}}\left(1+\frac{|x-x_Q|}{\ell(Q)}\right)^{-2d-2\delta-3}$, for all $x\notin\widetilde{Q}$, where $x_Q$ and $\ell(Q)$ respectively denote the center and the sidelength of $Q$. \label{defratom1molec}
\item $\int_{\mathbb{R}^d}x^{\beta}\textbf{m}(x)dx=0$, for all multi-indexes $\beta$ with $|\beta|\leq \delta$.\label{defratom2molec}
\end{enumerate}
\end{defn} 

We denote by $\mathcal{M}\ell(q,r,\delta)$ the set of all $(\textbf{m},Q)$ such that $\textbf{m}$ is a $(q,r,\delta)$-molecule centered at $Q$. We obtained in \cite{AbFt} the following theorem. 

\begin{thm}[\cite{AbFt}, Theorem 5.3] \label{thafondammolec2}
Let $\max\left\{p,1\right\}<r<+\infty$, $0<\eta<q$ and $\delta\geq\left\lfloor d\left(\frac{1}{q}-1\right)\right\rfloor$ be an integer. Then, for all sequences $\left\{(\textbf{m}_n, Q^n)\right\}_{n\geq 0}$ in $\mathcal{M}\ell(q,r,\delta)$ and for all sequences of scalars $\left\{{\lambda}_n\right\}_{n\geq 0}$ such that 
\begin{eqnarray*}
\left\|\sum_{n\geq 0}\left(\frac{|\lambda_n|}{\left\|\chi_{_{Q^n}}\right\|_{q}}\right)^{\eta}\chi_{_{Q^{n}}}\right\|_{\frac{q}{\eta},\frac{p}{\eta}}<+\infty,
\end{eqnarray*}
the series $f:=\sum_{n\geq 0}{\lambda}_n\textbf{m}_n$ converges in $\mathcal{S'}$ and $\mathcal{H}^{(q,p)}$, with 
\begin{eqnarray*}
\left\|f\right\|_{\mathcal{H}^{(q,p)}}\lsim\left\|\sum_{n\geq 0}\left(\frac{|\lambda_n|}{\left\|\chi_{_{Q^n}}\right\|_{q}}\right)^{\eta}\chi_{_{Q^{n}}}\right\|_{\frac{q}{\eta},\frac{p}{\eta}}^{\frac{1}{\eta}}. 
\end{eqnarray*}
\end{thm}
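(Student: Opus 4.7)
The natural plan is to reduce the molecular decomposition to the atomic one (Theorem \ref{thafonda}) by showing that every $(q,r,\delta)$-molecule $\textbf{m}$ centered at $Q$ admits an atomic decomposition $\textbf{m}=\sum_{k\geq 0} c_k\mathfrak{a}_k$, where each $\mathfrak{a}_k$ is (a constant multiple of) a $(q,r,\delta)$-atom supported on a cube $Q^{(k)}$ comparable to $2^k\widetilde Q$, and the coefficients decay geometrically: $|c_k|\lesssim 2^{-k\varepsilon}$ for some $\varepsilon>0$ depending on $\delta$. Once this is proved, I feed the double series $f=\sum_n\sum_k \lambda_n c_{n,k}\mathfrak{a}_{n,k}$ into Theorem \ref{thafonda}; the required norm control on $f$ then reduces to estimating
\[
\left\|\sum_n\sum_k\left(\frac{|\lambda_n c_{n,k}|}{\|\chi_{Q^{(n,k)}}\|_q}\right)^{\eta}\chi_{Q^{(n,k)}}\right\|_{q/\eta,\,p/\eta}
\]
by the analogous sum over the cubes $Q^n$.

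\textbf{Atomic decomposition of a single molecule.} I introduce the annuli $E_0=\widetilde Q$ and $E_k=2^k\widetilde Q\setminus 2^{k-1}\widetilde Q$ for $k\geq 1$, write $\textbf{m}=\sum_k\textbf{m}\chi_{E_k}$, and correct each piece by a polynomial of degree $\leq\delta$ so that moments up to order $\delta$ vanish. Concretely, one picks an $L^2(E_k)$-orthonormal basis of polynomials of degree $\leq\delta$ and defines $P_k=\sum_\beta\bigl(\int_{E_k}\textbf{m}\,\phi_\beta^{(k)}\bigr)\phi_\beta^{(k)}$; then $\mathfrak{a}_k:=\textbf{m}\chi_{E_k}-P_k+R_k$ (with $R_k$ a telescoping polynomial built to absorb the $P_k$'s while respecting the moment condition of $\textbf{m}$) is supported in a cube $Q^{(k)}\subset C 2^k\widetilde Q$, has vanishing moments up to order $\delta$, and from the molecule's decay $|\textbf{m}(x)|\leq|Q|^{-1/q}(1+|x-x_Q|/\ell(Q))^{-2d-2\delta-3}$ one derives $\|\mathfrak{a}_k\|_r\leq c_k|Q^{(k)}|^{1/r-1/q}$ with $|c_k|\lesssim 2^{-k\varepsilon}$ for $\varepsilon=d+2\delta+3-d/q+\text{(room)}$, which is strictly positive for the admissible range of $\delta$. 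This construction is classical (Taibleson--Weiss); the only care needed is that the $L^r$-estimate on $E_0=\widetilde Q$ uses hypothesis \eqref{defratom11molec} directly.

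\textbf{The amalgam norm estimate.} Substituting the atomic decompositions of each $\textbf{m}_n$, using $\|\chi_{Q^{(n,k)}}\|_q\approx 2^{kd/q}\|\chi_{Q^n}\|_q$ and the pointwise bound $\chi_{Q^{(n,k)}}(x)\leq C2^{kd\tau}[\mathfrak M(\chi_{Q^n})(x)]^{\tau}$ (valid for every $\tau>0$), one obtains
\[
\sum_n\sum_k\left(\frac{|\lambda_n c_{n,k}|}{\|\chi_{Q^{(n,k)}}\|_q}\right)^{\eta}\chi_{Q^{(n,k)}}(x)\lesssim\Bigl(\sum_k 2^{-k(\varepsilon\eta+d\eta/q-d\tau)}\Bigr)\sum_n\left(\frac{|\lambda_n|}{\|\chi_{Q^n}\|_q}\right)^{\eta}[\mathfrak M(\chi_{Q^n})(x)]^{\tau}.
\]
The main obstacle is the joint choice of $\tau$: I need $\tau>1$ in order to apply the vector-valued maximal inequality (Proposition \ref{operamaxima}) with $u=\tau$ in $(L^{q/\eta},\ell^{p/\eta})$ (note that $q/\eta>1$ and $p/\eta>1$ because $0<\eta<q\leq p$), and simultaneously $\tau<\eta/q+\varepsilon\eta/d$ for the geometric series in $k$ to converge. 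The molecule decay exponent $2d+2\delta+3$ was chosen precisely so that $\varepsilon$ is large enough for this window to be non-empty. Having picked such a $\tau$, Proposition \ref{operamaxima} (with $F_n=(|\lambda_n|/\|\chi_{Q^n}\|_q)^{\eta/\tau}\chi_{Q^n}$) yields
\[
\Bigl\|\sum_n\left(\tfrac{|\lambda_n|}{\|\chi_{Q^n}\|_q}\right)^{\eta}[\mathfrak M(\chi_{Q^n})]^{\tau}\Bigr\|_{q/\eta,p/\eta}\lesssim\Bigl\|\sum_n\left(\tfrac{|\lambda_n|}{\|\chi_{Q^n}\|_q}\right)^{\eta}\chi_{Q^n}\Bigr\|_{q/\eta,p/\eta}.
\]
Combining with the geometric sum gives the required bound, so Theorem \ref{thafonda} applies and delivers both the convergence of $f=\sum_n\lambda_n\textbf{m}_n$ in $\mathcal{S}'$ and $\mathcal H^{(q,p)}$ and the claimed quasi-norm inequality.
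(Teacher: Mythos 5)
The paper itself does not prove this theorem here — it is quoted from \cite{AbFt}, Theorem 5.3 — so I can only assess your argument on its own terms. Your overall route (split each molecule over the dyadic annuli $E_k$ around $\widetilde Q$, correct by polynomials \`a la Taibleson--Weiss to obtain $(q,r,\delta)$-atoms on cubes comparable to $2^k\widetilde Q$ with coefficients $|c_k|\lsim 2^{-k\varepsilon}$, $\varepsilon=2d+2\delta+3-d/q>0$, then feed the resulting double-indexed atomic family into Theorem \ref{thafonda}) is the standard way to deduce a molecular decomposition from an atomic one, and that part of the sketch is sound.

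The gap is exactly at the step you call ``the joint choice of $\tau$''. Your two constraints are $\tau>1$ (to invoke Proposition \ref{operamaxima} with inner exponent $\tau$) and $\tau<\eta/q+\varepsilon\eta/d$. With $\varepsilon=2d+2\delta+3-d/q$ the upper bound equals $\eta(2d+2\delta+3)/d$ (the $\eta/q$ terms cancel), so the window $\left(1,\ \eta(2d+2\delta+3)/d\right)$ is non-empty if and only if $\eta>d/(2d+2\delta+3)$. The theorem is stated for \emph{every} $\eta\in(0,q)$, and for small $\eta$ this window is empty; your claim that the decay exponent $2d+2\delta+3$ was ``chosen precisely'' to make it non-empty is only correct for $\eta$ near $q$ (the hypothesis $\delta\geq\left\lfloor d\left(\frac{1}{q}-1\right)\right\rfloor$ does give $d/(2d+2\delta+3)<q$, so such $\eta$ exist). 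The repair is cheap but must be stated: fix one $\eta_0\in\left(d/(2d+2\delta+3),\,q\right)$, run your argument for $\eta_0$, and then use that for $0<\eta\leq\eta_0$ and nonnegative $a_n$ one has $\sum_n a_n^{\eta_0/\eta}\leq\left(\sum_n a_n\right)^{\eta_0/\eta}$ pointwise, which (applied with $a_n=(|\lambda_n|/\|\chi_{_{Q^n}}\|_q)^{\eta}\chi_{_{Q^n}}(x)$) yields
\[
\left\|\sum_{n\geq 0}\left(\frac{|\lambda_n|}{\left\|\chi_{_{Q^n}}\right\|_{q}}\right)^{\eta_0}\chi_{_{Q^{n}}}\right\|_{\frac{q}{\eta_0},\frac{p}{\eta_0}}^{\frac{1}{\eta_0}}\leq\left\|\sum_{n\geq 0}\left(\frac{|\lambda_n|}{\left\|\chi_{_{Q^n}}\right\|_{q}}\right)^{\eta}\chi_{_{Q^{n}}}\right\|_{\frac{q}{\eta},\frac{p}{\eta}}^{\frac{1}{\eta}}.
\]
This transfers both the finiteness hypothesis and the quasi-norm bound from $\eta_0$ to every smaller $\eta$, completing the full range of the statement. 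Without this (or an equivalent device, e.g.\ running the maximal-function step with an auxiliary exponent independent of the given $\eta$), the proof as written does not cover small $\eta$. The remaining ingredients — the geometric decay of the atomic coefficients, the pointwise bound $\chi_{_{Q^{(n,k)}}}\leq C2^{kd\tau}[\mathfrak{M}(\chi_{_{Q^n}})]^{\tau}$, and the application of Proposition \ref{operamaxima} — are all correct.
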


\section{A characterization of the dual space of $\mathcal{H}^{(q,p)}$}
\subsection{Some results about Campanato's spaces}
In this section, we consider an integer $\delta\geq\left\lfloor d\left(\frac{1}{q}-1\right)\right\rfloor$.  
Let $1<r\leq+\infty$. We denote by $L_{\mathrm{comp}}^r(\mathbb{R}^d)$, the subspace of $L^r$- functions with compact support. We set 
$$L_{\mathrm{comp}}^{r,\delta}(\mathbb{R}^d):=\left\{f\in L_{\mathrm{comp}}^r(\mathbb{R}^d):\ \int_{\mathbb{R}^d}f(x)x^{\beta}dx=0,\ |\beta|\leq\delta\right\}$$ 
and, for all cubes $Q$,
 $$L^{r,\delta}(Q):=\left\{f\in L^r(Q):\ \int_{Q}f(x)x^{\beta}dx=0,\ |\beta|\leq\delta\right\},$$
 where $L^r(Q)$ stands for the subspace of $L^r$-functions supported in $Q$. Also, $L^r(Q)$ endowed with the norm $\left\|\cdot\right\|_{L^r(Q)}$ defined by 
$$\left\|f\right\|_{L^r(Q)}:=\left(\int_{Q}|f(x)|^rdx\right)^{\frac{1}{r}}.$$
 We point out that $L_{\mathrm{comp}}^{r,\delta}(\mathbb{R}^d)$ is a dense subspace of $\mathcal{H}^{(q,p)}$. Indeed, $L_{\mathrm{comp}}^{r,\delta}(\mathbb{R}^d)=\mathcal{H}_{fin}^{(q,p)}$ (the subspace of $\mathcal{H}^{(q,p)}$ consisting of finite linear combinations of $(q,r,\delta)$-atoms) and $\mathcal{H}_{fin}^{(q,p)}$ is dense in $\mathcal{H}^{(q,p)}$. Let $f\in L_{\mathrm{loc}}^1$ and $Q$ be a cube. Then, there exists an unique  polynomial of $\mathcal{P_{\delta}}$ ($\mathcal{P_{\delta}}:=\mathcal{P_{\delta}}(\mathbb{R}^d)$ is the space of polynomial functions of degree at most $\delta$) that we denote by $P_Q^{\delta}(f)$ such that, for all $\mathfrak{q}\in\mathcal{P_{\delta}}$,
\begin{eqnarray}
\int_{Q}\left[f(x)-P_Q^{\delta}(f)(x)\right]\mathfrak{q}(x)dx=0. \label{Campanato1}
\end{eqnarray}
Relation (\ref{Campanato1}) follows from Riesz's representation theorem. From (\ref{Campanato1}), we have $P_Q^{\delta}(g)=g$, if $g\in\mathcal{P_{\delta}}$.   

\begin{remark} 
Let $1\leq r\leq+\infty$ and $f\in L_{\mathrm{loc}}^r$. We have, for all $Q\in\mathcal{Q}$,    
\begin{eqnarray}
\sup_{x\in Q}|P_Q^{\delta}(f)(x)|\leq\frac{C}{|Q|}\int_{Q}|f(x)|dx, \label{1Campanato3}
\end{eqnarray}
and
\begin{eqnarray}
\left(\int_{Q}|P_Q^{\delta}(f)(x)|^rdx\right)^{\frac{1}{r}}\leq C\left(\int_{Q}|f(x)|^rdx\right)^{\frac{1}{r}}, \label{3Campanato3}
\end{eqnarray}
where the constant $C>0$ does not depend on $Q$ and $f$. 
\end{remark}

For (\ref{1Campanato3}), see \cite{SZLU}, Lemma 4.1. Estimate (\ref{3Campanato3}) follows from (\ref{1Campanato3}). 

\begin{defn} [\cite{NEYS}, Definition 6.1 $\left(\mathcal{L}_{r,\phi,\delta}(\mathbb{R}^d)\right)$] Let $1\leq r\leq+\infty$, a function $\phi: \mathcal{Q}\rightarrow (0,+\infty)$  and $f\in L_{\mathrm{loc}}^r$. One denotes 
\begin{equation}
\left\|f\right\|_{\mathcal{L}_{r,\phi,\delta}}:=\sup_{Q\in\mathcal{Q}}\frac{1}{\phi(Q)}\left(\frac{1}{|Q|}\int_{Q}\left|f(x)-P_Q^{\delta}(f)(x)\right|^rdx\right)^{\frac{1}{r}}, \label{Campanato2}
\end{equation}
when $r<+\infty$, and
\begin{eqnarray}
\left\|f\right\|_{\mathcal{L}_{r,\phi,\delta}}:=\sup_{Q\in\mathcal{Q}}\frac{1}{\phi(Q)}\left\|f-P_Q^{\delta}(f)\right\|_{L^{\infty}(Q)}, \label{Campanato3}
\end{eqnarray}
when $r=+\infty$. Then, the Campanato space $\mathcal{L}_{r,\phi,\delta}(\mathbb{R}^d)$ is defined to be the set of all $f\in L_{\mathrm{loc}}^r$ such that $\left\|f\right\|_{\mathcal{L}_{r,\phi,\delta}}<+\infty$. One considers elements in $\mathcal{L}_{r,\phi,\delta}(\mathbb{R}^d)$ modulo polynomials of degree $\delta$ so that $\mathcal{L}_{r,\phi,\delta}(\mathbb{R}^d)$ is a Banach space. When one writes $f\in\mathcal{L}_{r,\phi,\delta}(\mathbb{R}^d)$, then $f$ stands for the representative of $\left\{f+\mathfrak{q}:\ \mathfrak{q}\ \text{ is polynomial of degree}\ \delta\right\}$.  
\end{defn}
\subsection{A dual of some Hardy-amalgam spaces}
For simplicity, we just denote $\mathcal{L}_{r,\phi,\delta}(\mathbb{R}^d)$ by $\mathcal{L}_{r,\phi,\delta}$ and define the function $\phi_{1}: \mathcal{Q}\rightarrow (0,+\infty)$ by $$\phi_{1}(Q)=\frac{\left\|\chi_{_{Q}}\right\|_{q,p}}{|Q|}\ ,$$ for all $Q\in\mathcal{Q}$. For any $T\in\left(\mathcal{H}^{(q,p)}\right)^{\ast}$, we set 
$$\left\|T\right\|:=\left\|T\right\|_{\left(\mathcal{H}^{(q,p)}\right)^{\ast}}=\sup_{\underset{\left\|f\right\|_{\mathcal{H}^{(q,p)}}\leq 1}{f\in\mathcal{H}^{(q,p)}}}|T(f)|.$$ 
Our duality result for $\mathcal{H}^{(q,p)}$, with $0<q\leq p\leq 1$, can be stated as follows. 

\begin{thm} \label{theoremdual}
Suppose that $0<q\leq p\leq 1$. Let $1<r\leq+\infty$. Then, $\left(\mathcal{H}^{(q,p)}\right)^{\ast}$ is isomorphic to $\mathcal{L}_{r',\phi_1,\delta}$ with equivalent norms, where $\frac{1}{r}+\frac{1}{r'}=1$. More precisely, we have the following assertions:  
\begin{enumerate}
\item Let $g\in\mathcal{L}_{r',\phi_1,\delta}$. Then, the mapping 
$$T_g:f\in\mathcal{H}_{fin}^{(q,p)}\longmapsto\int_{\mathbb{R}^d}g(x)f(x)dx,$$ where $\mathcal{H}_{fin}^{(q,p)}$ is the subspace of $\mathcal{H}^{(q,p)}$ consisting of finite linear combinations of $(q,r,\delta)$-atoms, extends to a unique continuous linear functional on $\mathcal{H}^{(q,p)}$ such that
\begin{eqnarray*}
\left\|T_g\right\|\leq C\left\|g\right\|_{\mathcal{L}_{r',\phi_{1},\delta}}, 
\end{eqnarray*}
where $C>0$ is a constant independent of $g$. \label{dualpoint1}
\item Conversely, for any $T\in\left(\mathcal{H}^{(q,p)}\right)^{\ast}$, there exists $g\in\mathcal{L}_{r',\phi_1,\delta}$ such that $$T(f)=\int_{\mathbb{R}^d}g(x)f(x)dx,\ \text{ for all }\ f\in\mathcal{H}_{fin}^{(q,p)},$$ and 
\begin{eqnarray*}
\left\|g\right\|_{\mathcal{L}_{r',\phi_{1},\delta}}\leq C\left\|T\right\|, 
\end{eqnarray*}
where $C>0$ is a constant independent of $T$. \label{dualpoint2}
\end{enumerate}
\end{thm}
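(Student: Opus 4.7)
My overall strategy follows the classical atomic/duality pattern for Hardy-space duality, with the amalgam reverse Minkowski inequality of Proposition \ref{InverseHoldMink3} replacing its Lebesgue analogue; this is precisely the step where the hypothesis $q\le p\le 1$ is needed. The representing pairing is the honest integral $T_g(f):=\int_{\mathbb{R}^d}g(x)f(x)\,dx$, which is well defined on $\mathcal{H}_{fin}^{(q,p)}=L_{\mathrm{comp}}^{r,\delta}(\mathbb{R}^d)$ because $f$ has compact support with $f\in L^r$ while $g\in L_{\mathrm{loc}}^{r'}$. For assertion (1), I fix a finite $(q,r,\delta)$-atomic decomposition $f=\sum_{n=0}^{m}\lambda_n\mathfrak{a}_n$ with $\mathrm{supp}\,\mathfrak{a}_n\subset Q^n$; the vanishing moments of $\mathfrak{a}_n$ allow me to insert $-P_{Q^n}^{\delta}(g)$ in the integrand, and Hölder on $Q^n$ gives
$$\left|\int_{\mathbb{R}^d}g\mathfrak{a}_n\,dx\right|\le\|g-P_{Q^n}^{\delta}(g)\|_{L^{r'}(Q^n)}\|\mathfrak{a}_n\|_{r}\le|Q^n|^{1-1/q}\phi_1(Q^n)\|g\|_{\mathcal{L}_{r',\phi_1,\delta}}=\frac{\|\chi_{Q^n}\|_{q,p}}{\|\chi_{Q^n}\|_q}\|g\|_{\mathcal{L}_{r',\phi_1,\delta}}.$$
Summing bounds $|T_g(f)|$ by $\|g\|_{\mathcal{L}_{r',\phi_1,\delta}}\sum_n|\lambda_n|\|\chi_{Q^n}\|_{q,p}/\|\chi_{Q^n}\|_q$, and Proposition \ref{InverseHoldMink3} (with the boundary case $q=p=1$ handled trivially by Fubini) majorises this by $\|g\|_{\mathcal{L}_{r',\phi_1,\delta}}\left\|\sum_n(|\lambda_n|/\|\chi_{Q^n}\|_q)\chi_{Q^n}\right\|_{q,p}$. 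For the $(q,\infty,\delta)$-regime this is already the $\eta=1$ finite atomic quasi-norm of Theorem \ref{thafondamfini}; for $r<\infty$, the pointwise inequality $\sum x_n\le(\sum x_n^{\eta})^{1/\eta}$ (valid for $0<\eta\le 1$ and $x_n\ge 0$) combined with the identity $\|u^{1/\eta}\|_{q,p}=\|u\|_{q/\eta,p/\eta}^{1/\eta}$ produces the $0<\eta<q$ form. Taking the infimum and invoking Theorem \ref{thafondamfini} yields $|T_g(f)|\lsim\|g\|_{\mathcal{L}_{r',\phi_1,\delta}}\|f\|_{\mathcal{H}^{(q,p)}}$, and the density of $\mathcal{H}_{fin}^{(q,p)}$ in $\mathcal{H}^{(q,p)}$ gives the unique continuous extension.

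For assertion (2), any $f\in L^{r,\delta}(Q)$ is (after rescaling) a constant multiple of a $(q,r,\delta)$-atom, so Theorem \ref{thafonda} yields $\|f\|_{\mathcal{H}^{(q,p)}}\lsim|Q|^{-1/r}\|\chi_Q\|_{q,p}\|f\|_{r}$, and therefore $T|_{L^{r,\delta}(Q)}$ is continuous in the $L^r(Q)$-topology with norm at most $C\|T\||Q|^{-1/r}\|\chi_Q\|_{q,p}$. The Hahn-Banach theorem together with $L^r$-$L^{r'}$ duality (for $r=\infty$ reduced to, say, $r=2$ using $L^{\infty,\delta}(Q)\subset L^{2,\delta}(Q)$ on the bounded cube $Q$) produces a local representative $g_Q\in L^{r'}(Q)$, unique modulo $\mathcal{P}_{\delta}$, with $T(f)=\int_Q g_Q f\,dx$ for every $f\in L^{r,\delta}(Q)$. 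I would then exhaust $\mathbb{R}^d$ by an increasing sequence of cubes $Q_1\subset Q_2\subset\cdots$ and inductively add polynomial corrections of degree $\le\delta$ so that $g_{Q_{k+1}}|_{Q_k}=g_{Q_k}$, producing a globally defined $g\in L_{\mathrm{loc}}^{r'}$ that represents $T$ on $L_{\mathrm{comp}}^{r,\delta}$. For the Campanato estimate, the orthogonality $\int_Q(g-P_Q^{\delta}(g))\mathfrak{q}\,dx=0$ for $\mathfrak{q}\in\mathcal{P}_{\delta}$ turns $L^r$-$L^{r'}$ duality into
$$\|g-P_{Q}^{\delta}(g)\|_{L^{r'}(Q)}=\sup_{\|h\|_{L^r(Q)}\le 1}|T(h-P_Q^{\delta}(h))|\lsim\|T\||Q|^{-1/r}\|\chi_Q\|_{q,p},$$
using (\ref{3Campanato3}) to dominate $\|h-P_Q^{\delta}(h)\|_r$ by $\|h\|_r$; since $|Q|^{-1/r}\|\chi_Q\|_{q,p}=|Q|^{1/r'}\phi_1(Q)$, this is exactly $\|g\|_{\mathcal{L}_{r',\phi_1,\delta}}\lsim\|T\|$.

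The main obstacle I anticipate is the gluing step in part (2): each local $g_Q$ is determined only modulo $\mathcal{P}_{\delta}$, so assembling a single $g\in L_{\mathrm{loc}}^{r'}$ that simultaneously represents $T$ on every $L^{r,\delta}(Q)$ requires a careful inductive polynomial-correction argument along an exhausting family of cubes, with verification that the global $g$ remains in $L_{\mathrm{loc}}^{r'}$ and faithfully reproduces $T$. A secondary technical nuisance is the case $r=\infty$, where $(L^{\infty})^{\ast}\ne L^{1}$ forces the detour through a finite exponent, together with the bookkeeping in part (1) that must simultaneously accommodate the two atomic-norm regimes ($\eta=1$ for $(q,\infty,\delta)$-atoms and $0<\eta<q$ for $r<\infty$) via the pointwise $\eta$-inequality.
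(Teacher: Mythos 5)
Your proposal is correct and follows essentially the same route as the paper's own proof: part (1) via Hölder against $g-P_{Q^n}^{\delta}(g)$ on each atom, the reverse Minkowski inequality of Proposition \ref{InverseHoldMink3}, the pointwise $\eta$-power trick, Theorem \ref{thafondamfini} and density; part (2) via Hahn--Banach and $L^r$--$L^{r'}$ duality on each cube, polynomial-correction gluing along an exhausting sequence, and the Campanato bound by testing against the normalized atoms $(f-P_Q^{\delta}(f))\chi_Q$. The only detail worth flagging is that in the case $r=+\infty$ the norm equivalence of Theorem \ref{thafondamfini} is only available on $\mathcal{H}_{fin}^{(q,p)}\cap\mathcal{C}(\mathbb{R}^d)$, so one should (as the paper does) run the argument of part (1) on that still-dense subspace.
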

\begin{proof}
For our proof, we borrow some ideas from \cite{MBOW}, Theorem 8.3, \cite{JGLR}, pp. 289-292; \cite{SZLU}, Theorem 4.1 and \cite{NEYS}, Theorem 7.5. We distinguish two cases: $1<r<+\infty$ and $r=+\infty$. 

Suppose that $1<r<+\infty$. We first prove Assertion (\ref{dualpoint1}). Fix $0<\eta<q$. Let $g\in\mathcal{L}_{r',\phi_1,\delta}$ , where $\frac{1}{r}+\frac{1}{r'}=1$. Consider the mapping $T_g$ defined on $\mathcal{H}_{fin}^{(q,p)}$ by $$T_g(f)=\int_{\mathbb{R}^d}g(x)f(x)dx,\ \forall\ f\in\mathcal{H}_{fin}^{(q,p)}.$$ It's easy to verify that $T_g$ is well defined and linear. Let $f\in\mathcal{H}_{fin}^{(q,p)}$. Then, there exist a finite sequence $\left\{\left(a_n,Q^n\right)\right\}_{n=0}^m$ in $\mathcal{A}(q,r,\delta)$ and a finite sequence of scalars $\left\{\lambda_n\right\}_{n=0}^m$ such that $f=\sum_{n=0}^m\lambda_n a_n$. Thus, by the vanishing condition of the atoms $a_n$, H\"older's inequality and Proposition \ref{InverseHoldMink3}, we have 
\begin{eqnarray*}
|T_g(f)|
&\leq&\sum_{n=0}^m|\lambda_n|\left\|a_n\right\|_r\left(\int_{Q^n}\left|g(x)-P_{Q^n}^{\delta}(g)(x)\right|^{r'}dx\right)^{\frac{1}{r'}}\\
&\leq&\sum_{n=0}^m\left\|\frac{|\lambda_n|}{\left\|\chi_{_{Q^n}}\right\|_q}\chi_{_{Q^n}}\right\|_{q,p}\left[\frac{1}{\phi_{1}(Q^n)}\left(\frac{1}{|Q^n|}\int_{Q^n}\left|g(x)-P_{Q^n}^{\delta}(g)(x)\right|^{r'}dx\right)^{\frac{1}{r'}}\right]\\ 
&\leq&\left(\sum_{n=0}^m\left\|\frac{|\lambda_n|}{\left\|\chi_{_{Q^n}}\right\|_q}\chi_{_{Q^n}}\right\|_{q,p}\right)\left\|g\right\|_{\mathcal{L}_{r',\phi_1,\delta}}\leq\left\|\sum_{n=0}^m\left(\frac{|\lambda_n|}{\left\|\chi_{_{Q^n}}\right\|_q}\right)^{\eta}\chi_{_{Q^n}}\right\|_{\frac{q}{\eta},\frac{p}{\eta}}^{\frac{1}{\eta}}\left\|g\right\|_{\mathcal{L}_{r',\phi_1,\delta}}.
\end{eqnarray*}
 Thus, 
\begin{eqnarray*}
|T_g(f)|\leq\left\|f\right\|_{\mathcal{H}_{fin}^{(q,p)}}\left\|g\right\|_{\mathcal{L}_{r',\phi_1,\delta}}\lsim \left\|f\right\|_{\mathcal{H}^{(q,p)}}\left\|g\right\|_{\mathcal{L}_{r',\phi_1,\delta}}, 
\end{eqnarray*}
by Theorem \ref{thafondamfini}. This shows that $g\in\left(\mathcal{H}^{(q,p)}\right)^{\ast}$ and 
\begin{eqnarray*}
\left\|g\right\|:=\left\|T_g\right\|\leq C\left\|g\right\|_{\mathcal{L}_{r',\phi_{1},\delta}}, 
\end{eqnarray*}
since $\mathcal{H}_{fin}^{(q,p)}$ is a dense subspace of $\mathcal{H}^{(q,p)}$ with respect to the quasi-norm $\left\|\cdot\right\|_{\mathcal{H}^{(q,p)}}$.
 
Now, we prove the second assertion. 
 Let $T\in\left(\mathcal{H}^{(q,p)}\right)^{\ast}$. Fix $0<\eta<q$.  Let $Q$ be a cube. We first prove that
\begin{eqnarray}
\left(\mathcal{H}^{(q,p)}\right)^{\ast}\subset\left(L^{r,\delta}(Q)\right)^{\ast}.\label{Hanbanc}
\end{eqnarray}
For any $f\in L^{r,\delta}(Q)\backslash\left\{0\right\}$, set $$a(x):=\left\|f\right\|_{L^r(Q)}^{-1}|Q|^{\frac{1}{r}-\frac{1}{q}}f(x)\ ,$$ for all $x\in\mathbb{R}^d$. Clearly, $(a,Q)\in\mathcal{A}(q,r,\delta)$. Thus, $f\in\mathcal{H}_{fin}^{(q,p)}\subset\mathcal{H}^{(q,p)}$ and 
\begin{eqnarray*}
\left\|f\right\|_{\mathcal{H}^{(q,p)}}=\left\|f\right\|_{L^r(Q)}|Q|^{\frac{1}{q}-\frac{1}{r}}\left\|a\right\|_{\mathcal{H}^{(q,p)}}\lsim |Q|^{\frac{1}{q}-\frac{1}{r}}\left\|f\right\|_{L^r(Q)}. 
\end{eqnarray*} 
Hence for all $f\in L^{r,\delta}(Q)$,  
\begin{eqnarray}
|T(f)|\leq\left\|T\right\|\left\|f\right\|_{\mathcal{H}^{(q,p)}}\lsim |Q|^{\frac{1}{q}-\frac{1}{r}}\left\|T\right\|\left\|f\right\|_{L^r(Q)}. \label{Campanato5}
\end{eqnarray}
Thus, $T$ is a continuous linear functional on $L^{r,\delta}(Q)$, with 
\begin{eqnarray*}
\left\|T\right\|_{\left(L^{r,\delta}(Q)\right)^{\ast}}:=\sup_{\underset{\left\|f\right\|_{L^r(Q)}\leq 1}{f\in L^{r,\delta}(Q)}}|T(f)|\lsim |Q|^{\frac{1}{q}-\frac{1}{r}}\left\|T\right\|.
\end{eqnarray*}
This proves (\ref{Hanbanc}). 

From (\ref{Hanbanc}), since $L^{r,\delta}(Q)$ is a subspace of $L^{r}(Q)$, by Hahn-Banach Theorem, there exists a continuous linear functional $T_Q$ which extends $T$ on $L^{r}(Q)$; namely $T_Q\in\left(L^{r}(Q)\right)^{\ast}$ and $T(f)=T_Q(f)$, for all $f\in L^{r,\delta}(Q)$. Also, by the duality $\left(L^{r}(Q)\right)^{\ast}=L^{r'}(Q)$, that is there exists $g^Q\in L^{r'}(Q)$ such that  
\begin{eqnarray*}
T_Q(f)=\int_{Q}f(x)g^Q(x)dx, 
\end{eqnarray*}
for all $f\in L^{r}(Q)$. Hence 
\begin{eqnarray}
T(f)=T_{Q}(f)=\int_{Q}f(x)g^{Q}(x)dx,
\label{Campanato6}
\end{eqnarray}
for all $f\in L^{r,\delta}(Q)$. Let $\left\{Q_n\right\}_{n\geq 1}$ be an increasing sequence of cubes which converges to $\mathbb{R}^d$; namely $Q_n\subset Q_{n+1}$, for all $n\geq 1$, and $\underset{n\geq 1}\bigcup Q_n=\mathbb{R}^d$. From the above result, it follows that for each cube $Q_n$, there exists $g^{Q_n}\in L^{r'}(Q_n)$, such that 
\begin{eqnarray}
T(f)=T_{Q_n}(f)=\int_{Q_n}f(x)g^{Q_n}(x)dx, 
\label{Campanato8}
\end{eqnarray}
for all $f\in L^{r,\delta}(Q_n)$.

Now, we construct a function $g\in L_{\mathrm{loc}}^{r'}$ such that 
\begin{eqnarray}
T(f)=\int_{Q_n}f(x)g(x)dx, \label{Campanato9}
\end{eqnarray}
for all $f\in L^{r,\delta}(Q_n)$ and all $n\geq1$. Let us first assume that $f\in L^{r,\delta}(Q_1)$. Then, 
\begin{eqnarray*}
T(f)=T_{Q_1}(f)=\int_{Q_1}f(x)g^{Q_1}(x)dx,
\end{eqnarray*}
by (\ref{Campanato8}). Moreover, $L^{r,\delta}(Q_1)\subset L^{r,\delta}(Q_2)$. Hence $f\in L^{r,\delta}(Q_2)$ and by (\ref{Campanato8}), 
\begin{eqnarray*}
T(f)=T_{Q_2}(f)=\int_{Q_2}f(x)g^{Q_2}(x)dx=\int_{Q_1}f(x)g^{Q_2}(x)dx. 
\end{eqnarray*}
Thus, for all $f\in L^{r,\delta}(Q_1)$, 
\begin{eqnarray}
\int_{Q_1}f(x)\left[g^{Q_1}(x)-g^{Q_2}(x)\right]dx=0. \label{Campanato10}
\end{eqnarray}
For any $h\in L^{r}(Q_1)$, we have $h-P_{Q_1}^{\delta}(h)\chi_{_{Q_1}}\in L^{r,\delta}(Q_1)$, by (\ref{Campanato1}). Hence 
\begin{eqnarray*}
0=\int_{Q_1}\left[h(x)-P_{Q_1}^{\delta}(h)(x)\chi_{_{Q_1}}(x)\right]\left[g^{Q_1}(x)-g^{Q_2}(x)\right]dx,
\end{eqnarray*}
for all $h\in L^{r}(Q_1)$, by (\ref{Campanato10}). But  
\begin{eqnarray*}
&&\int_{Q_1}\left[h(x)-P_{Q_1}^{\delta}(h)(x)\chi_{_{Q_1}}(x)\right]\left[g^{Q_1}(x)-g^{Q_2}(x)\right]dx\\
&=&\int_{Q_1}h(x)\left[(g^{Q_1}-g^{Q_2})(x)-P_{Q_1}^{\delta}(g^{Q_1}-g^{Q_2})(x)\right]dx
\end{eqnarray*}
(see \cite{JGLR}, pp. 290-291). Therefore, 
\begin{eqnarray}
\int_{Q_1}h(x)\left[(g^{Q_1}-g^{Q_2})(x)-P_{Q_1}^{\delta}(g^{Q_1}-g^{Q_2})(x)\right]dx=0, \label{Campanato11} 
\end{eqnarray}
for all $h\in L^{r}(Q_1)$. It follows from (\ref{Campanato11}) that 
\begin{eqnarray*}
(g^{Q_1}-g^{Q_2})(x)=P_{Q_1}^{\delta}(g^{Q_1}-g^{Q_2})(x), 
\end{eqnarray*}
for almost all $x\in Q_1$. Thus, after changing values of $g^{Q_1}$ (or $g^{Q_2}$) on a set of measure zero, we have $$(g^{Q_1}-g^{Q_2})(x)=P_{Q_1}^{\delta}(g^{Q_1}-g^{Q_2})(x),\ \text{ for all }\ x\in Q_1.$$ 
Arguing as above, we obtain 
\begin{eqnarray}
(g^{Q_n}-g^{Q_{n+1}})(x)=P_{Q_n}^{\delta}(g^{Q_n}-g^{Q_{n+1}})(x), \label{Campanato12}
\end{eqnarray}
for all $x\in Q_n$ and all $n\geq1$.

 Set  
\begin{eqnarray}
g_1(x):=g^{Q_1}(x),\ \text{ if }\ x\in Q_1 \label{Campanato13}
\end{eqnarray}
and $$g_{n+1}(x):=\left\{\begin{array}{lll}g^{Q_n}(x),&\text{ if }&x\in Q_n\ ,\\ 
g^{Q_{n+1}}(x)+P_{Q_n}^{\delta}(g^{Q_n}-g^{Q_{n+1}})(x),&\text{ if }&x\in Q_{n+1}\backslash{Q_n}\ ,\end{array}\right.$$ for all $n\geq 1$. Then, we have  
\begin{eqnarray}
g_{n+1}(x)=g^{Q_{n+1}}(x)+P_{Q_n}^{\delta}(g^{Q_n}-g^{Q_{n+1}})(x), \label{Campanato14}
\end{eqnarray}
for all $x\in Q_{n+1}$ and all $n\geq 1$, by (\ref{Campanato12}). With (\ref{Campanato13}) and (\ref{Campanato14}), we define the function $g$ on $\mathbb{R}^d$ by $$g(x)=g_n(x)\ ,\ \text{ if }\ x\in Q_n\ ,$$ for all $n\geq 1$. We have that $g\in L_{\mathrm{loc}}^{r'}$, since $g_n\in L_{\mathrm{loc}}^{r'}$, for all $n\geq 1$, by definition. Also, it's easy to see that  
\begin{eqnarray*}
\int_{Q_n}f(x)g(x)dx=T_{Q_n}(f)=T(f),
\end{eqnarray*}
for all $f\in L^{r,\delta}(Q_n)$ and all $n\geq1$, by (\ref{Campanato8}). Thus, the function $g$ satisfies (\ref{Campanato9}).

To finish, we show that $g\in\mathcal{L}_{r',\phi_1,\delta}$ and 
\begin{eqnarray}
T(f)=\int_{\mathbb{R}^d}f(x)g(x)dx, \label{Campanato15}  
\end{eqnarray}
for all $f\in\mathcal{H}_{fin}^{(q,p)}$. To prove (\ref{Campanato15}), consider $f\in\mathcal{H}_{fin}^{(q,p)}$. We have $f\in L_{\mathrm{comp}}^{r,\delta}(\mathbb{R}^d)$, since $L_{\mathrm{comp}}^{r,\delta}(\mathbb{R}^d)=\mathcal{H}_{fin}^{(q,p)}$. Thus, by the definition of the sequence $\left\{Q_n\right\}_{n\geq 1}$, there exists an integer $n\geq 1$ such that $f\in L^{r,\delta}(Q_n)$. Hence (\ref{Campanato15}) holds, by (\ref{Campanato9}). It remains to show that $g\in\mathcal{L}_{r',\phi_1,\delta}$. Let $Q$ be a cube and $f\in L^r(Q)$ such that $\left\|f\right\|_{L^r(Q)}\leq 1$. Set 
\begin{eqnarray}
a(x):=C_{f,Q,\delta,r}|Q|^{\frac{1}{r}-\frac{1}{q}}\left(f(x)-P_Q^{\delta}(f)(x)\right)\chi_{_Q}(x), \label{Campanato15bis}
\end{eqnarray}
for all $x\in\mathbb{R}^d$, with $C_{f,Q,\delta,r}:=\left(1+\left\|\left(f-P_Q^{\delta}(f)\right)\chi_{_Q}\right\|_r\right)^{-1}$. Notice that $C_{f,Q,\delta,r}^{-1}\leq 2+C$, where $C>0$ is a constant independent of $f$ and $Q$, by (\ref{3Campanato3}) and the fact that $\left\|f\right\|_{L^r(Q)}\leq1$. It is straightforward that $(a,Q)\in\mathcal{A}(q,r,\delta)$. Hence  
\begin{eqnarray*}
T(a)=\int_{\mathbb{R}^d}a(x)g(x)dx=\int_{Q}a(x)g(x)dx,
\end{eqnarray*}
by (\ref{Campanato15}). Since $T\in\left(\mathcal{H}^{(q,p)}\right)^{\ast}$, by using the vanishing condition of the atom $a$, Theorem \ref{thafondamfini}, 
it follows that 
\begin{eqnarray*}
\left|\int_{Q}a(x)\left[g(x)-P_Q^{\delta}(g)\right]dx\right|&=&\left|\int_{Q}a(x)g(x)dx\right|=|T(a)|\leq\left\|T\right\|\left\|a\right\|_{\mathcal{H}^{(q,p)}}\\
&\lsim& \left\|a\right\|_{\mathcal{H}_{fin}^{(q,p)}}\left\|T\right\|\lsim\frac{1}{\left\|\chi_{_{Q}}\right\|_q}\left\|\chi_{_{Q}}\right\|_{q,p}\left\|T\right\|.
\end{eqnarray*}
Combining this result with (\ref{Campanato15bis}) and (\ref{Campanato1}), we obtain 
\begin{eqnarray*}
\left|\int_{Q}f(x)\left[g(x)-P_Q^{\delta}(g)\right]dx\right|&\lsim&C_{f,Q,\delta,r}^{-1}|Q|^{\frac{1}{q}-\frac{1}{r}}\frac{1}{\left\|\chi_{_{Q}}\right\|_q}\left\|\chi_{_{Q}}\right\|_{q,p}\left\|T\right\|\\
&\lsim&|Q|^{-\frac{1}{r}}\left\|\chi_{_{Q}}\right\|_{q,p}\left\|T\right\|.
\end{eqnarray*} 
Thus, 
\begin{eqnarray*}
\left(\int_{Q}|g(x)-P_Q^{\delta}(g)|^{r'}dx\right)^{\frac{1}{r'}}&=&\sup_{\underset{\left\|f\right\|_{L^r(Q)}\leq 1}{f\in L^r(Q)}}\left|\int_{Q}f(x)\left[g(x)-P_Q^{\delta}(g)\right]dx\right|\\
&\lsim&|Q|^{-\frac{1}{r}}\left\|\chi_{_{Q}}\right\|_{q,p}\left\|T\right\|.
\end{eqnarray*}
From this inequality, it follows that
\begin{eqnarray*}
\frac{1}{\phi_1(Q)}\left(\frac{1}{|Q|}\int_{Q}|g(x)-P_Q^{\delta}(g)|^{r'}dx\right)^{\frac{1}{r'}}\lsim\left\|T\right\|.
\end{eqnarray*}
Therefore, $\left\|g\right\|_{\mathcal{L}_{r',\phi_{1},\delta}}\lsim\left\|T\right\|$ and $g\in\mathcal{L}_{r',\phi_{1},\delta}$. This finishes the proof 
for the case where $1<r<+\infty$.

We consider now the case where $r=+\infty$. Let $g\in\mathcal{L}_{1,\phi_{1},\delta}$. Consider the mapping $T_g$ defined on $\mathcal{H}_{fin}^{(q,p)}\cap\mathcal{C}(\mathbb{R}^d)$ by $$T_g(f)=\int_{\mathbb{R}^d}f(x)g(x)dx,\ \forall\ f\in\mathcal{H}_{fin}^{(q,p)}\cap\mathcal{C}(\mathbb{R}^d).$$ Fix $0<\eta<q$. Arguing as in the first case, we obtain
\begin{eqnarray*}
|T_g(f)|\leq\left\|f\right\|_{\mathcal{H}_{fin}^{(q,p)}}\left\|g\right\|_{\mathcal{L}_{1,\phi_1,\delta}}\lsim \left\|f\right\|_{\mathcal{H}^{(q,p)}}\left\|g\right\|_{\mathcal{L}_{1,\phi_1,\delta}},  
\end{eqnarray*}
by Theorem  \ref{thafondamfini}. This shows that $g\in\left(\mathcal{H}^{(q,p)}\right)^{\ast}$ and 
\begin{eqnarray*}
\left\|g\right\|:=\left\|T_g\right\|\leq C\left\|g\right\|_{\mathcal{L}_{1,\phi_1,\delta}}, 
\end{eqnarray*}
since $\mathcal{H}_{fin}^{(q,p)}\cap\mathcal{C}(\mathbb{R}^d)$ is dense in $\mathcal{H}^{(q,p)}$ with respect to the quasi-norm $\left\|\cdot\right\|_{\mathcal{H}^{(q,p)}}$. This proves the first assertion.
 
For the converse, let $T\in\left(\mathcal{H}^{(q,p)}\right)^{\ast}$. Consider $1<s<+\infty$. According to the first case, there exists a function $g\in L_{\mathrm{loc}}^{s'}$ such that
\begin{eqnarray}
T(f)=\int_{\mathbb{R}^d}f(x)g(x)dx, \label{Campanato16} 
\end{eqnarray}
for all $f\in\mathcal{H}_{fin}^{(q,p)}$, where $\mathcal{H}_{fin}^{(q,p)}$ consists of finite linear combinations of $(q,s,\delta)$-atoms, $g\in\mathcal{L}_{s',\phi_{1},\delta}$ and $\left\|g\right\|_{\mathcal{L}_{s',\phi_{1},\delta}}\lsim\left\|T\right\|$. Hence  
\begin{eqnarray*}
T(f)=\int_{\mathbb{R}^d}f(x)g(x)dx,
\end{eqnarray*}
for all $f\in\mathcal{H}_{fin}^{(q,p)}$, where $\mathcal{H}_{fin}^{(q,p)}$ consists of finite linear combinations of $(q,\infty,\delta)$-atoms, by (\ref{Campanato16}) and the fact that $(q,\infty,\delta)$-atoms are $(q,s,\delta)$-atoms. Also, $\left\|g\right\|_{\mathcal{L}_{1,\phi_{1},\delta}}\lsim\left\|T\right\|$, since $\left\|g\right\|_{\mathcal{L}_{1,\phi_{1},\delta}}\leq\left\|g\right\|_{\mathcal{L}_{s',\phi_{1},\delta}}$. This ends the proof in Case $r=+\infty$ and hence of Theorem \ref{theoremdual}. 
\end{proof}

We mention that we are not able at the moment to characterize the dual of $\mathcal{H}^{(q,p)}$ when $0<q\leq 1<p<+\infty$. 

\section{Boundedness of some classical linear operators}
 
In this section, unless otherwise specified, we assume that $0<q\leq 1$ and $q\leq p<+\infty$. 

\subsection{Calder\'on-Zygmund operator} 
 
Let $\triangle:=\left\{(x,x):\ x\in\mathbb{R}^d\right\}$ be the diagonal of $\mathbb{R}^d\times\mathbb{R}^d$.
We say that a function $K:\mathbb{R}^d\times\mathbb{R}^d\backslash\triangle\rightarrow\mathbb{C}$ is a standard kernel if there exist a constant $A>0$ and an exponent $\mu>0$ such that:
\begin{eqnarray}
|K(x,y)|\leq A|x-y|^{-d}, \label{integralsing1}
\end{eqnarray}
\begin{equation}
|K(x,y)-K(x,z)|\leq A\frac{|y-z|^{\mu}}{|x-y|^{d+\mu}}\ ,\ \text{ if }\ \ |x-y|\geq 2|y-z| \label{integralsing2}
\end{equation}
and
\begin{equation}
|K(x,y)-K(w,y)|\leq A\frac{|x-w|^{\mu}}{|x-y|^{d+\mu}}\ ,\ \text{ if }\ \ |x-y|\geq 2|x-w|. \label{integralsing3}
\end{equation}

We denote by $\mathcal{SK}(\mu,A)$ the class of all standard kernels $K$ associated with $\mu$ and $A$. 

A classical example of standard kernel is the function  $K$ defined on $\mathbb{R}^d\times\mathbb{R}^d\backslash\triangle$ by 
\begin{eqnarray}
K(x,y)=k(x-y) ,\label{inésing}
\end{eqnarray}
where $k$ is a $\mathcal{C}^{\infty}$-function on $\mathbb{R}^d\backslash\left\{0\right\}$ such that $$A_m:=\sup_{x\in\mathbb{R}^d\backslash{\left\{0\right\}}}|x|^{d+m}|\nabla^m k(x)|<+\infty,\ \forall\ m\in\mathbb{N}\cup\left\{0\right\},$$ 
and $|\nabla^m k(x)|:=\left(\underset{|\beta|=m}\sum|({\partial}^{\beta}k)(x)|^2\right)^{\frac{1}{2}}$.


\begin{defn} [\cite{JD}, Definition 5.11] An operator $T$ is a (generalized) Calder\'on-Zygmund operator if 
\begin{enumerate}
\item $T$ is bounded on $L^2$;
\item There exists a standard kernel $K$ such that for $f\in L^2$ with compact support,
\begin{eqnarray}
T(f)(x)=\int_{\mathbb{R}^d}K(x,y)f(y)dy,\ x\notin\text{supp}(f). \label{egopcalzyg1}
\end{eqnarray}
\end{enumerate}
\end{defn}

It is well known that the Calder\'on-Zygmund operator $T$ is bounded on $L^r$, $1<r<+\infty$ (see \cite{JD}, Theorem 5.10 and \cite{LG}, Theorem 8.2.1). Our second main result is the following.

\begin{thm} \label{theoremsing1}
Let $T$ be a Calder\'on-Zygmund operator with kernel $K\in \mathcal{SK}(\mu,A)$. If $\frac{d}{d+\mu}<q\leq 1$, then $T$ extends to a bounded operator from $\mathcal{H}^{(q,p)}$ to $(L^q,\ell^p)$. 
\end{thm}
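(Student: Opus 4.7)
The plan is to combine the atomic decomposition of Theorem \ref{thafonda} with the Fefferman--Stein-type vector-valued maximal inequality of Proposition \ref{operamaxima}. By density of $\mathcal{H}_{fin}^{(q,p)}$ in $\mathcal{H}^{(q,p)}$, it suffices to prove $\|T(f)\|_{q,p}\lsim\|f\|_{\mathcal{H}^{(q,p)}}$ for $f\in\mathcal{H}_{fin}^{(q,p)}$, which I decompose as $f=\sum_n\lambda_n a_n$ into $(q,\infty,\delta)$-atoms with $\delta=\lfloor d(1/q-1)\rfloor$ and admissible parameter $\eta=q$. The crucial numerical point is that the hypothesis $q>d/(d+\mu)$ amounts exactly to $sq>1$ for $s:=(d+\mu)/d$; since $p\geq q$ also $sp>1$, so Proposition \ref{operamaxima} will be applicable in the amalgam $(L^{sq},\ell^{sp})$.

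For each atom $a$ supported on a cube $Q$ I would split $\mathbb{R}^d=\widetilde{Q}\cup(\widetilde{Q})^c$ with $\widetilde{Q}=2\sqrt{d}\,Q$. On the far region, using only the vanishing moment $\int a=0$, the H\"older condition (\ref{integralsing2}) on $K$, and $\|a\|_1\leq|Q|^{1-1/q}$, I obtain the standard estimate
\[
|T(a)(x)|\chi_{(\widetilde Q)^c}(x)\leq C|Q|^{-1/q}\left(\frac{\ell(Q)}{|x-x_Q|}\right)^{d+\mu}\leq C\|\chi_Q\|_q^{-1}\bigl[\mathfrak{M}(\chi_Q)(x)\bigr]^{s},
\]
using $\mathfrak{M}(\chi_Q)(x)\approx(\ell(Q)/|x-x_Q|)^d$ off $\widetilde Q$. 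On the near region, the $L^{r_0}$-boundedness of $T$ (for any fixed $r_0\in(1,\infty)$) gives $\|T(a)\chi_{\widetilde Q}\|_{r_0}\leq C|Q|^{1/r_0-1/q}$, and since $\mathfrak{M}(\chi_Q)\approx 1$ on $\widetilde Q$ this $L^{r_0}$-bound can be absorbed into the same envelope $\|\chi_Q\|_q^{-1}[\mathfrak{M}(\chi_Q)]^{s}$ through a Lebesgue-differentiation/Kolmogorov interpolation (controlling $|T(a)(x)|\chi_{\widetilde Q}(x)$ by $[\mathfrak{M}(|T(a)|^\tau)(x)]^{1/\tau}\chi_{\widetilde Q}(x)$ for a suitable $\tau<q$).

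With the resulting uniform envelope, the $q$-subadditivity of $L^q$ ($q\leq 1$) on each unit cube and the identity $\|g\|_{q,p}^q=\||g|^q\|_{1,p/q}$ would give
\[
\|T(f)\|_{q,p}^q\lsim\Biggl\|\sum_n\left(\frac{|\lambda_n|}{\|\chi_{Q^n}\|_q}\right)^q[\mathfrak{M}(\chi_{Q^n})]^{sq}\Biggr\|_{1,p/q}.
\]
Writing this as $\|(\sum_n[\mathfrak{M}(h_n)]^u)^{1/u}\|_{sq,sp}^u$ with $u:=sq>1$ and $h_n:=(|\lambda_n|/\|\chi_{Q^n}\|_q)^{1/s}\chi_{Q^n}$, Proposition \ref{operamaxima} transfers it to $\|(\sum_n h_n^u)^{1/u}\|_{sq,sp}^u=\|\sum_n(|\lambda_n|/\|\chi_{Q^n}\|_q)^q\chi_{Q^n}\|_{1,p/q}$, whose $(1/q)$-power is precisely the atomic quasi-norm of $f$ associated with $\eta=q$. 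Taking the infimum over atomic representations of $f$ and invoking Theorem \ref{thafonda} yields $\|T(f)\|_{q,p}\lsim\|f\|_{\mathcal{H}^{(q,p)}}$.

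The main obstacle I expect is the treatment of the near region $\widetilde{Q}$, where only an $L^{r_0}$-bound on $T(a)$ is available rather than a pointwise one; converting this into a bound truly compatible with the envelope $\|\chi_Q\|_q^{-1}[\mathfrak{M}(\chi_Q)]^{s}$ demands care and may ultimately be cleanest via a relaxed-molecule viewpoint, extending Theorem \ref{thafondammolec2} to accommodate the decay exponent $d+\mu$ rather than the stronger $2d+2\delta+3$. A minor secondary point is the justification that $T$ may be applied term-by-term to the atomic series, which follows from each atom lying in $L^{r_0}$ with compact support, so that the kernel representation (\ref{egopcalzyg1}) is valid pointwise off the atomic supports.
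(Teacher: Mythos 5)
Your overall skeleton (near/far splitting around $\widetilde{Q}=2\sqrt{d}\,Q$, the far-region pointwise bound $|T(a)(x)|\lsim\left\|\chi_{Q}\right\|_q^{-1}\left[\mathfrak{M}(\chi_{Q})(x)\right]^{(d+\mu)/d}$ from the vanishing moment and (\ref{integralsing2}), and the reduction of the resulting sum to Proposition \ref{operamaxima} with exponent $u=(d+\mu)q/d>1$) is exactly the paper's strategy. But your treatment of the near region contains a genuine gap. You claim that the $L^{r_0}$-bound $\left\|T(a)\chi_{\widetilde{Q}}\right\|_{r_0}\lsim|Q|^{1/r_0-1/q}$ can be ``absorbed into the same envelope $\left\|\chi_{Q}\right\|_q^{-1}[\mathfrak{M}(\chi_{Q})]^{s}$.'' On $\widetilde{Q}$ that envelope is comparable to the constant $|Q|^{-1/q}$, so such an absorption would amount to a pointwise bound $|T(a)(x)|\lsim|Q|^{-1/q}$ on $\widetilde{Q}$, i.e.\ an $L^\infty$ bound on $T(a)$ near the support of the atom. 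This is false for a general Calder\'on--Zygmund operator (it does not map $L^\infty$ into $L^\infty$), and the intermediate quantity $[\mathfrak{M}(|T(a)|^{\tau})(x)]^{1/\tau}$ you introduce is not dominated by that envelope either; the Lebesgue-differentiation step only gives $|T(a)|\leq[\mathfrak{M}(|T(a)|^{\tau})]^{1/\tau}$ a.e., which is the wrong direction for what you need. The ``relaxed molecule'' fallback does not repair this: Theorem \ref{thafondammolec2} concerns boundedness into $\mathcal{H}^{(q,p)}$, not $(L^q,\ell^p)$, and requires decay of order $2d+2\delta+3$, which $d+\mu$ does not supply.

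The paper resolves the near region without any pointwise envelope: it works with $(q,r,\delta)$-atoms for a \emph{finite} $r>\max\{2,p\}$ and a fixed $0<\eta<q$, keeps the local pieces $|T(\textbf{a}_n)|\chi_{\widetilde{Q^n}}$ as they are, and uses only that $\mathrm{supp}\bigl(|T(\textbf{a}_n)|\chi_{\widetilde{Q^n}}\bigr)^{\eta}\subset\widetilde{Q^n}$ together with $\left\|\bigl(|T(\textbf{a}_n)|\chi_{\widetilde{Q^n}}\bigr)^{\eta}\right\|_{r/\eta}\lsim|\widetilde{Q^n}|^{\eta/r-\eta/q}$ (from the $L^r$-boundedness of $T$); the sum is then controlled by the argument of Theorem 4.6 in \cite{AbFt}, which is exactly designed for families of functions with this $L^{r/\eta}$-size-plus-support structure under the condition $1<q/\eta\leq p/\eta<r/\eta$. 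This is also why the choice $\eta=q$ and $(q,\infty,\delta)$-atoms in your write-up is awkward: with $r=\infty$, Theorem \ref{thafondamfini} only equates the finite atomic norm with $\left\|\cdot\right\|_{\mathcal{H}^{(q,p)}}$ on $\mathcal{H}_{fin}^{(q,p)}\cap\mathcal{C}(\mathbb{R}^d)$, while invoking the infinite decomposition of Theorem \ref{thafonda} instead forces you to justify applying $T$ term by term to a series that converges only in $\mathcal{H}^{(q,p)}$ and $\mathcal{S}'$, where $T$ has no a priori continuity. Switching to finite $r$ and $\eta<q$, as the paper does, removes both difficulties at once.
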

\begin{proof}
Let $r>\max\left\{2;p\right\}$ be a real and $\delta\geq\left\lfloor d\left(\frac{1}{q}-1\right)\right\rfloor$ be an integer. Let $\mathcal H^{(q,p)}_{fin}$ be the space of finite linear combinations of $(q,r,\delta)$-atoms, and  
$f$ an element of this space. 
There exist a finite sequence $\left\{(\textbf{a}_n, Q^n)\right\}_{n=0}^j$ in $\mathcal{A}(q,r,\delta)$ and a finite sequence of scalars $\left\{{\lambda}_n\right\}_{n=0}^j$ such $f=\sum_{n=0}^j\lambda_n\textbf{a}_n$. Set $\widetilde{Q^n}:=2\sqrt{d}Q^n$, $n\in\left\{0,1,\ldots,j\right\}$, and denote by $x_n$ and $\ell_n$ respectively the center and side-length of $Q^n$. We have 
\begin{eqnarray*}
|T(f)(x)|\leq\sum_{n=0}^j|\lambda_n|\left(|T(\textbf{a}_n)(x)\chi_{_{\widetilde{Q^n}}}(x)|+|T(\textbf{a}_n)(x)\chi_{_{\mathbb{R}^d\backslash{\widetilde{Q^n}}}}(x)|\right),\ x\in\mathbb R^d.
\end{eqnarray*}
 

For $x\notin\widetilde{Q^n}$, we have 
\begin{eqnarray*}
|T(\textbf{a}_n)(x)|=\left|\int_{Q^n}K(x,y)\textbf{a}_n(y)dy\right|&=&\left|\int_{Q^n}[K(x,y)-K(x,x_n)]\textbf{a}_n(y)dy\right|\\
&\leq&\int_{Q^n}|K(x,x_n)-K(x,y)||\textbf{a}_n(y)|dy,
\end{eqnarray*}
by the vanishing condition of the atom $\textbf{a}_n$. Since, $|x-x_n|>2|y-x_n|$, for all $y\in Q^n$, it comes from (\ref{integralsing2}) that
\begin{eqnarray*}
|T(\textbf{a}_n)(x)|&\leq&A\int_{Q^n}\frac{|y-x_n|^{\mu}}{|x-x_n|^{d+\mu}}|\textbf{a}_n(y)|dy\\
&\lsim&A\frac{\ell_n^{\mu}}{|x-x_n|^{d+\mu}}\int_{Q^n}|\textbf{a}_n(y)|dy\lsim \frac{A}{\left\|\chi_{_{Q^n}}\right\|_q}\frac{\ell_n^{d+\mu}}{|x-x_n|^{d+\mu}}\cdot
\end{eqnarray*}
But, 
\begin{eqnarray}
\frac{\ell_n^{d+\mu}}{|x-x_n|^{d+\mu}}\lsim\left[\mathfrak{M}(\chi_{_{Q^n}})(x)\right]^{\frac{d+\mu}{d}}. \label{rectif3}
\end{eqnarray}
Hence 
\begin{equation}
|T(\textbf{a}_n)(x)|\lsim\frac{\left[\mathfrak{M}(\chi_{_{Q^n}})(x)\right]^{\frac{d+\mu}{d}}}{\left\|\chi_{_{Q^n}}\right\|_q},\ x\notin\widetilde{Q^n}.\label{applicattheo6}
\end{equation} 
Therefore,
\begin{equation*}
|T(f)(x)|\lsim\sum_{n=0}^j|\lambda_n|\left(|T(\textbf{a}_n)(x)|\chi_{_{\widetilde{Q^n}}}(x)+\frac{\left[\mathfrak{M}(\chi_{_{Q^{n}}})(x)\right]^{\frac{d+\mu}{d}}}{\left\|\chi_{_{Q^{n}}}\right\|_q}\right),
\end{equation*}
for all $x\in\mathbb{R}^d$, so that 
\begin{eqnarray*}
\left\|T(f)\right\|_{q,p}&\lsim&\left\|\sum_{n=0}^j|\lambda_n|\left(|T(\textbf{a}_n)|\chi_{_{\widetilde{Q^n}}}+\frac{\left[\mathfrak{M}(\chi_{_{Q^{n}}})\right]^{\frac{d+\mu}{d}}}{\left\|\chi_{_{Q^{n}}}\right\|_q}\right)\right\|_{q,p}\\
&\lsim&\left\|\sum_{n=0}^j|\lambda_n||T(\textbf{a}_n)|\chi_{_{\widetilde{Q^n}}}\right\|_{q,p}+\left\|\sum_{n=0}^j|\lambda_n|\frac{\left[\mathfrak{M}(\chi_{_{Q^{n}}})\right]^{\frac{d+\mu}{d}}}{\left\|\chi_{_{Q^{n}}}\right\|_q}\right\|_{q,p}.
\end{eqnarray*}
Set 
\begin{eqnarray*}
I=\left\|\sum_{n=0}^j|\lambda_n||T(\textbf{a}_n)|\chi_{_{\widetilde{Q^n}}}\right\|_{q,p}\ \text{ and }\ J=\left\|\sum_{n=0}^j|\lambda_n|\frac{\left[\mathfrak{M}(\chi_{_{Q^{n}}})\right]^{\frac{d+\mu}{d}}}{\left\|\chi_{_{Q^{n}}}\right\|_q}\right\|_{q,p}.
\end{eqnarray*}
 Let us fix $0<\eta<q$. Proceeding as in the proof of Theorem 4.6 in \cite{AbFt}, we obtain 
\begin{equation*}
J\lsim\left\|\sum_{n=0}^j\left(\frac{|\lambda_n|}{\left\|\chi_{_{Q^n}}\right\|_{q}}\right)^{\eta}\chi_{_{Q^{n}}}\right\|_{\frac{q}{\eta},\frac{p}{\eta}}^{\frac{1}{\eta}}
\end{equation*}
and
\begin{equation*}
I\leq\left\|\sum_{n=0}^j|\lambda_n|^{\eta}\left(|T(\textbf{a}_n)|\chi_{_{\widetilde{Q^n}}}\right)^{\eta}\right\|_{\frac{q}{\eta},\frac{p}{\eta}}^{\frac{1}{\eta}}\lsim\left\|\sum_{n=0}^j\left(\frac{|\lambda_n|}{\left\|\chi_{_{Q^n}}\right\|_{q}}\right)^{\eta}\chi_{_{Q^{n}}}\right\|_{\frac{q}{\eta},\frac{p}{\eta}}^{\frac{1}{\eta}},
\end{equation*}
since $1<\frac{q}{\eta}\leq\frac{p}{\eta}<\frac{r}{\eta}$ , $\text{supp}\left(|T(\textbf{a}_n)|\chi_{_{\widetilde{Q^n}}}\right)^{\eta}\subset\widetilde{Q^n}$ and
\begin{equation*}
\left\|\left(|T(\textbf{a}_n)|\chi_{_{\widetilde{Q^n}}}\right)^{\eta}\right\|_{\frac{r}{\eta}}=\left\|T(\textbf{a}_n)\chi_{_{\widetilde{Q^n}}}\right\|_{r}^{\eta}\leq\left\|T(\textbf{a}_n)\right\|_{r}^{\eta}
\lsim\left\|\textbf{a}_n\right\|_{r}^{\eta}\lsim|\widetilde{Q^n}|^{\frac{1}{\frac{r}{\eta}}-\frac{1}{\frac{q}{\eta}}}.
\end{equation*}
Hence 
\begin{equation*}
\left\|T(f)\right\|_{q,p}\lsim I+J\lsim\left\|\sum_{n=0}^j\left(\frac{|\lambda_n|}{\left\|\chi_{_{Q^n}}\right\|_{q}}\right)^{\eta}\chi_{_{Q^{n}}}\right\|_{\frac{q}{\eta},\frac{p}{\eta}}^{\frac{1}{\eta}}.
\end{equation*}
It follows that  
\begin{equation*}
\left\|T(f)\right\|_{q,p}\lsim\left\|f\right\|_{\mathcal{H}_{fin}^{(q,p)}}\lsim\left\|f\right\|_{\mathcal{H}^{(q,p)}},
\end{equation*}
By Theorem \ref{thafondamfini}. 
Therefore, $T$ is bounded from $\mathcal{H}_{fin}^{(q,p)}$ to $(L^q,\ell^p)$ and the density of $\mathcal{H}_{fin}^{(q,p)}$ in $\mathcal{H}^{(q,p)}$ with respect to the quasi-norm $\left\|\cdot\right\|_{\mathcal{H}^{(q,p)}}$ yields the result.
\end{proof}

If $T$ is a Calder\'on-Zymund operator with a sufficiently smooth standard kernel $K$, then the previous result (Theorem \ref{theoremsing1}) can be improved in the sense of values of $q$. More precisely, we have the following. 

\begin{thm} \label{theoremsing0}
Let $T$ be a Calder\'on-Zygmund operator with kernel $K\in \mathcal{SK}(\mu,A)$. Suppose that there exist an integer $m>0$ and a constant $C_m>0$ such that  
\begin{eqnarray}
|\partial_y^{\beta}K(x,y)|\leq\frac{C_m}{|x-y|^{d+|\beta|}}\ , \label{inegsing0}
\end{eqnarray}
for all multi-indexes $\beta$ with $|\beta|\leq m$ and all $(x,y)\in\mathbb{R}^d\times\mathbb{R}^d\backslash\triangle$. Then, for $\frac{d}{d+m}<q\leq 1$, $T$ extends to a bounded operator from $\mathcal{H}^{(q,p)}$ to $(L^q,\ell^p)$. 
\end{thm}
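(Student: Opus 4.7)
The plan is to mimic the proof of Theorem \ref{theoremsing1}, replacing the H\"older regularity exponent $\mu$ by a polynomial of degree up to $m-1$ that can be cancelled against atoms with sufficiently many vanishing moments. Since $q > d/(d+m)$, we have $d(1/q - 1) < m$, hence $\lfloor d(1/q - 1) \rfloor \leq m - 1$, and we may fix an integer $\delta$ with $\lfloor d(1/q - 1) \rfloor \leq \delta \leq m - 1$ (for instance $\delta = m - 1$). Fix $r > \max\{2, p\}$ and let $f = \sum_{n=0}^{j} \lambda_n \textbf{a}_n \in \mathcal{H}_{fin}^{(q,p)}$ be a finite combination of $(q, r, \delta)$-atoms, with cubes $Q^n$ of side-length $\ell_n$ centered at $x_n$ and $\widetilde{Q^n} = 2\sqrt{d}\,Q^n$.

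Just as in Theorem \ref{theoremsing1}, split $T(f) = \sum_n \lambda_n T(\textbf{a}_n)\chi_{_{\widetilde{Q^n}}} + \sum_n \lambda_n T(\textbf{a}_n)\chi_{_{\mathbb{R}^d\setminus\widetilde{Q^n}}}$. The local piece is handled without modification, using the $L^r$-boundedness of $T$, the size $\|T(\textbf{a}_n)\|_r \lsim |Q^n|^{1/r - 1/q}$, and the atomic estimates. The key new ingredient is the bound for the far piece: for $x \notin \widetilde{Q^n}$, the $\delta$ vanishing moments of $\textbf{a}_n$ allow us to write
\[
T(\textbf{a}_n)(x) = \int_{Q^n} \Bigl[K(x, y) - \sum_{|\beta| \leq \delta} \tfrac{(y - x_n)^\beta}{\beta!}\, \partial_y^\beta K(x, x_n)\Bigr] \textbf{a}_n(y)\, dy.
\]
By Taylor's theorem the bracket is controlled by $\sup_{|\beta| = \delta + 1} |\partial_y^\beta K(x, \xi)|\,|y - x_n|^{\delta + 1}/(\delta+1)!$ for some $\xi$ on the segment $[x_n, y]$. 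Because $\delta + 1 \leq m$, the hypothesis (\ref{inegsing0}) applies, and since $|y - x_n| \leq \tfrac{\sqrt{d}}{2}\ell_n \leq \tfrac{1}{2}|x - x_n|$ for $y \in Q^n$, we have $|x - \xi| \geq |x - x_n|/2$. Combining with H\"older's inequality ($\int_{Q^n}|\textbf{a}_n| \leq |Q^n|^{1 - 1/q}$) and (\ref{rectif3}) yields
\[
|T(\textbf{a}_n)(x)| \lsim \frac{\ell_n^{\delta + 1}}{|x - x_n|^{d + \delta + 1}} \int_{Q^n} |\textbf{a}_n(y)|\, dy \lsim \frac{1}{\|\chi_{_{Q^n}}\|_q}\cdot\frac{\ell_n^{d+\delta+1}}{|x - x_n|^{d + \delta + 1}} \lsim \frac{[\mathfrak{M}(\chi_{_{Q^n}})(x)]^{(d + \delta + 1)/d}}{\|\chi_{_{Q^n}}\|_q},
\]
which is (\ref{applicattheo6}) with the exponent $(d + \mu)/d$ replaced by $(d + \delta + 1)/d$.

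From here the argument of Theorem \ref{theoremsing1} runs without change. By construction $\delta + 1 > d(1/q - 1)$, so one may fix $\eta$ with $d/(d + \delta + 1) < \eta < q$; then $u := (d + \delta + 1)\eta/d > 1$ and Proposition \ref{operamaxima} applies in $(L^{q/\eta}, \ell^{p/\eta})$. Together with Proposition \ref{InverseHoldMink3}, Theorem \ref{thafondamfini}, and the support property $\mathrm{supp}\,T(\textbf{a}_n)\chi_{_{\widetilde{Q^n}}} \subset \widetilde{Q^n}$ for the local part, this delivers $\|T(f)\|_{q,p} \lsim \|f\|_{\mathcal{H}^{(q,p)}_{fin}} \approx \|f\|_{\mathcal{H}^{(q,p)}}$, and density of $\mathcal{H}_{fin}^{(q,p)}$ in $\mathcal{H}^{(q,p)}$ concludes. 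The only genuine obstacle is the Taylor-remainder step; once one verifies that $|x - \xi| \approx |x - x_n|$ on the relevant segment, everything else is a line-by-line transcription of the previous proof.
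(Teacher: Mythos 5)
Your proposal is correct and follows essentially the same route as the paper: both replace the H\"older estimate (\ref{integralsing2}) by a Taylor expansion of $K(x,\cdot)$ about the center of the cube, cancel the polynomial part against the vanishing moments of the atom, bound the remainder via hypothesis (\ref{inegsing0}) to get the decay $[\mathfrak{M}(\chi_{_{Q^n}})]^{(d+\delta+1)/d}/\left\|\chi_{_{Q^n}}\right\|_q$ off $\widetilde{Q^n}$, and then rerun the splitting and summation argument of Theorem \ref{theoremsing1}. The only cosmetic difference is that the paper fixes the Taylor order at $m-1$ and takes any $\delta\geq m-1$, whereas you take $\delta\leq m-1$ and expand to order $\delta$; with your suggested choice $\delta=m-1$ the two proofs coincide.
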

\begin{proof}
It is similar to the one of Theorem \ref{theoremsing1}. The only difference is that we use Taylor's formula to obtain Estimate (\ref{applicattheo6}), with $\mu$ replaced by $m$. More precisely, let 
$r>\max\left\{2;p\right\}$ be a real and $\delta\geq m-1$ be an integer and $\textbf{a}$ a $(q,r,\delta)$-atom supported in $Q(x_0,\ell)$. 
For $x\notin\widetilde{Q}=2\sqrt{d}Q$, we have 
\begin{eqnarray*}
|T(\textbf{a})(x)|&=&\left|\int_{Q}\left(K(x,y)-\sum_{|\beta|\leq m-1}\frac{(y-x_0)^{\beta}}{\beta!}\partial_y^{\beta}K(x,x_0)\right)\textbf{a}(y)dy\right|\\
&=&\left|\int_{Q}R(y,x_0)\textbf{a}(y)dy\right|\leq\int_{Q}|R(y,x_0)||a(y)|dy,
\end{eqnarray*}
with 
\begin{eqnarray*}
R(y,x_0)=m\sum_{|\beta|=m}\frac{(y-x_0)^{\beta}}{\beta!}\int_{0}^1(1-\theta)^{m-1}\partial_y^{\beta}K(x,\theta y+(1-\theta)x_0)d\theta,
\end{eqnarray*} 
according to Taylor's formula and the vanishing condition of the atom $\textbf{a}$. But then, 
\begin{eqnarray}
|R(y,x_0)|\lsim\frac{|y-x_0|^{m}}{|x-x_0|^{d+m}},\ y\in Q,\label{inegsing2}
\end{eqnarray}
 so that 
\begin{eqnarray*}
|T(\textbf{a})(x)|&\lsim&\int_{Q}\frac{|y-x_0|^{m}}{|x-x_0|^{d+m}}|\textbf{a}(y)|dy\\
&\lsim&\frac{\ell^{m}}{|x-x_0|^{d+m}}\int_{Q}|\textbf{a}(y)|dy\lsim \frac{\left[\mathfrak{M}(\chi_{_{Q}})(x)\right]^{\frac{d+m}{d}}}{\left\|\chi_{_{Q}}\right\|_q}.
\end{eqnarray*} 
\end{proof}

In Theorem \ref{theoremsing0}, we do not yet know whether (\ref{inegsing0}) can be replaced by the following condition: 
\begin{eqnarray}
|\partial_x^{\beta}K(x,y)|\leq\frac{C_m}{|x-y|^{d+|\beta|}}\ , \label{bisinegsing0}
\end{eqnarray}
for all multi-indexes $\beta$ with $|\beta|\leq m$ and all $(x,y)\in\mathbb{R}^d\times\mathbb{R}^d\backslash\triangle$. 

A consequence of Theorem \ref{theoremsing0} is the following result.

\begin{cor}\label{retoursur2}
If a Calder\'on-Zygmund operator $T$ satisfies the condition (\ref{inegsing0}) for any integer $m>0$, then $T$ extends to a bounded operator from $\mathcal{H}^{(q,p)}$ to $(L^q,\ell^p)$, for all $0<q\leq 1$. 
\end{cor}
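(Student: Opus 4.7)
The corollary is an immediate consequence of Theorem \ref{theoremsing0}, so my plan is essentially to observe that the hypothesis is uniform in $m$ and then choose $m$ large enough for any given $q$. Fix $0<q\leq 1$. The condition (\ref{inegsing0}) is assumed to hold for \emph{every} integer $m>0$, with some constant $C_m$ that may depend on $m$ but not on $q$. Since $\frac{d}{d+m}\to 0$ as $m\to +\infty$, I can select an integer $m_0>0$ such that $\frac{d}{d+m_0}<q$; for instance, any $m_0$ with $m_0>d\left(\frac{1}{q}-1\right)$ works.

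Having chosen such an $m_0$, the hypothesis of Theorem \ref{theoremsing0} is satisfied with the exponent $m=m_0$ and the constant $C_{m_0}$, and moreover $\frac{d}{d+m_0}<q\leq 1$. Applying Theorem \ref{theoremsing0} directly yields that $T$ extends to a bounded operator from $\mathcal{H}^{(q,p)}$ to $(L^q,\ell^p)$, with operator norm depending on $C_{m_0}$ and the other fixed parameters. Since $q\in(0,1]$ was arbitrary, the conclusion holds for all such $q$.

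There is no real obstacle in this proof: the whole content lies in Theorem \ref{theoremsing0}, and the corollary is the observation that when the kernel is smooth enough to satisfy the derivative bound for \emph{every} order $m$, one can absorb arbitrarily small values of $q>0$ by taking $m$ sufficiently large. The only subtle point to mention is that the bound constant implicit in the conclusion will depend on $q$ (through the choice of $m_0$ and hence $C_{m_0}$), but this is harmless since the statement is for a fixed $q$.
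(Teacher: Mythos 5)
Your proof is correct and is essentially identical to the paper's one-line argument: for a given $q$, pick an integer $m_0>d\left(\frac{1}{q}-1\right)$ (equivalently $\frac{d}{d+m_0}<q$) and invoke Theorem \ref{theoremsing0}. The remark that the implicit constant depends on $q$ through $C_{m_0}$ is a harmless, accurate addition.
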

\begin{proof}
For any $0<q\leq1$, by taking an integer $m>d\left(\frac{1}{q}-1\right)$, the result immediately follows from Theorem \ref{theoremsing0}.
\end{proof}


We can see that Proposition 5.4 in \cite{AbFt} is a consequence of Corollary \ref{retoursur2}. In fact, 
it's easy to verify that for any integer $m>0$, the kernel $K$ of the above mentioned proposition, satisfies  
\begin{eqnarray}
|\partial_y^{\beta}K(x,y)|\leq\frac{C(m)}{|x-y|^{d+|\beta|}}\ , \label{0applicainequal200}
\end{eqnarray}
for all multi-indexes $\beta$ with $|\beta|\leq m$ and all $(x,y)\in\mathbb{R}^d\times\mathbb{R}^d\backslash\triangle$, where $C(m)>0$ is a constant independent of $\beta$ and $(x,y)$.

In the next lemma, we give some sufficient smoothness conditions for the kernel $K$ of a Calder\'on-Zygmund operator $T$, under which this operator 
can be extended to a bounded operator from $\mathcal{H}^{(q,p)}$ to $\mathcal{H}^{(q,p)}$. 

\begin{lem}\label{propretoursur}
Let $T$ be a Calder\'on-Zygmund operator with the following properties:
\begin{enumerate}
\item There exist an integer $\delta\geq0$ and a constant $C_\delta>0$ such that  
\begin{eqnarray}
|\partial_y^{\beta}K(x,y)|\leq\frac{C_\delta}{|x-y|^{d+|\beta|}}\ , \label{propretoursur1}
\end{eqnarray}
for all $\beta$ with $|\beta|\leq d+2\delta+3$ and all $(x,y)\in\mathbb{R}^d\times\mathbb{R}^d\backslash\triangle$.
\item If $f\in L_{\mathrm{comp}}^2(\mathbb{R}^d)$ and $\int_{\mathbb{R}^d}x^{\beta}f(x)dx=0$, for all multi-indexes $\beta$ with $|\beta|\leq d+2\delta+2$, then
\begin{eqnarray}
\int_{\mathbb{R}^d}x^{\beta}T(f)(x)dx=0, \label{propretoursur2}
\end{eqnarray}
for all multi-indexes $\beta$ with $|\beta|\leq\delta$.
\end{enumerate}
If $\left\lfloor d\left(\frac{1}{q}-1\right)\right\rfloor\leq\delta$, then, for any $(a,Q)\in\mathcal{A}(q,\infty,d+2\delta+2)$ and any $1<r<+\infty$, $$\left(\frac{1}{c_1}T(a),Q\right)\in\mathcal{M}\ell(q,r,\delta),$$ where $c_1>0$ is a constant independent of the atom $a$. 
\end{lem}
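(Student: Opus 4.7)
The strategy is to verify, for a constant $c_1>0$ independent of $a$, that $(c_1^{-1}T(a),Q)$ meets the three conditions of Definition~\ref{defhqpatomolec} of a $(q,r,\delta)$-molecule centered at $Q$. Write $x_Q$ and $\ell(Q)$ for the center and side-length of $Q$, and set $\widetilde{Q}=2\sqrt{d}\,Q$. All implicit constants in $\lsim$ will depend only on $T$, $d$, $\delta$ and $r$, and never on the atom $a$.

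For the local $L^r$-bound (condition~(\ref{defratom11molec})), I use that any Calder\'on-Zygmund operator is bounded on $L^r$ for $1<r<+\infty$. Since $(a,Q)\in\mathcal{A}(q,\infty,d+2\delta+2)$, one has $\supp(a)\subset Q$ and $\|a\|_\infty\leq|Q|^{-1/q}$, hence $\|a\|_r\leq|Q|^{1/r-1/q}$. Consequently,
$$\left\|T(a)\chi_{_{\widetilde{Q}}}\right\|_r\leq\|T(a)\|_r\lsim\|a\|_r\leq|Q|^{1/r-1/q}.$$

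For the pointwise decay (condition~(\ref{defratom1molec})), fix $x\notin\widetilde{Q}$ and expand $y\mapsto K(x,y)$ at $y=x_Q$ via Taylor's formula to order $d+2\delta+2$. Since $a$ annihilates every polynomial in $y$ of degree at most $d+2\delta+2$, the Taylor polynomial contribution vanishes and
$$T(a)(x)=(d+2\delta+3)\sum_{|\beta|=d+2\delta+3}\frac{1}{\beta!}\int_Q (y-x_Q)^\beta\left(\int_0^1(1-\theta)^{d+2\delta+2}(\partial_y^\beta K)(x,\theta y+(1-\theta)x_Q)\,d\theta\right)a(y)\,dy.$$
For $y\in Q$ and $x\notin\widetilde{Q}$, elementary geometry yields $|x-x_Q|\geq 2|y-x_Q|$, so $|x-(\theta y+(1-\theta)x_Q)|\geq |x-x_Q|/2$ for every $\theta\in[0,1]$. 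Hypothesis (\ref{propretoursur1}), applied with $|\beta|=d+2\delta+3$, then gives $|(\partial_y^\beta K)(x,\theta y+(1-\theta)x_Q)|\lsim|x-x_Q|^{-(2d+2\delta+3)}$. Combining this with $|y-x_Q|\lsim\ell(Q)$, $\|a\|_\infty\leq|Q|^{-1/q}$ and $|Q|=\ell(Q)^d$, I get
$$|T(a)(x)|\lsim|Q|^{-1/q}\left(\frac{\ell(Q)}{|x-x_Q|}\right)^{2d+2\delta+3}\lsim|Q|^{-1/q}\left(1+\frac{|x-x_Q|}{\ell(Q)}\right)^{-(2d+2\delta+3)},$$
where the last step uses $1+|x-x_Q|/\ell(Q)\leq 2|x-x_Q|/\ell(Q)$ for $x\notin\widetilde{Q}$.

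The vanishing moments (condition~(\ref{defratom2molec})) are immediate: $a\in L^2_{\mathrm{comp}}(\mathbb{R}^d)$ has moments of order $\leq d+2\delta+2$ equal to zero, so hypothesis (\ref{propretoursur2}) yields $\int_{\mathbb{R}^d}x^\beta T(a)(x)\,dx=0$ for all $|\beta|\leq\delta$. Taking $c_1$ as the maximum of the implicit constants from the previous two steps shows that $(c_1^{-1}T(a),Q)\in\mathcal{M}\ell(q,r,\delta)$. The technical heart of the argument is the Taylor expansion step, where the chosen smoothness order $d+2\delta+3$ for $K$, the moment-vanishing order $d+2\delta+2$ for the atom and the molecular decay exponent $2d+2\delta+3$ are precisely matched to each other; every other step is either the standard $L^r$-theory of Calder\'on-Zygmund operators or a direct invocation of the hypotheses.
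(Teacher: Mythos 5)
Your proposal is correct and follows essentially the same route as the paper: the $L^r$ boundedness of $T$ for the local estimate, a Taylor expansion of $y\mapsto K(x,y)$ at $x_Q$ to order $d+2\delta+2$ combined with the atom's vanishing moments and hypothesis (\ref{propretoursur1}) at $|\beta|=d+2\delta+3$ for the pointwise decay, and hypothesis (\ref{propretoursur2}) for the moment conditions. The geometric justification $|x-x_Q|\ge 2|y-x_Q|$ for $x\notin\widetilde{Q}$, $y\in Q$, which you make explicit, is consistent with the paper's (implicit) use of $|x-x_Q|\approx \ell(Q)+|x-x_Q|$.
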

\begin{proof}
Suppose that $\left\lfloor d\left(\frac{1}{q}-1\right)\right\rfloor\leq\delta$. Let $(a,Q)\in\mathcal{A}(q,\infty,d+2\delta+2)$ and $1<r<+\infty$. Set $\widetilde{Q}:=2\sqrt{d}Q$ and denote by $x_Q$ the center of $Q$. Since $\mathcal{A}(q,\infty,d+2\delta+2)\subset\mathcal{A}(q,r,d+2\delta+2)$, we have 
\begin{eqnarray}
\left\|T(a)\chi_{_{\widetilde{Q}}}\right\|_r\leq\left\|T(a)\right\|_r\lsim\left\|a\right\|_r\lsim|Q|^{\frac{1}{r}-\frac{1}{q}}.  \label{propretoursur3}
\end{eqnarray}
Let us estimate $|T(a)(x)|$, for $x\notin\widetilde{Q}$. Fix $x\notin\widetilde{Q}$. Since $a\in L^2$ and $x\notin Q$, we have, by Taylor's formula and the vanishing condition of the atom $a$, 
\begin{eqnarray*}
|T(a)(x)|=\left|\int_{Q}K(x,y)a(y)dy\right|\leq\int_{Q}|R(y,x_Q)||a(y)|dy,
\end{eqnarray*}
with
\begin{eqnarray*}
R(y,x_Q)=\delta'\sum_{|\beta|=\delta'}\frac{(y-x_Q)^{\beta}}{\beta!}\int_{0}^1(1-\theta)^{d+2\delta+2}\partial_y^{\beta}K(x,\theta y+(1-\theta)x_Q)d\theta,
\end{eqnarray*} 
and $\delta'=d+2\delta+3$. Moreover, since 
\begin{eqnarray*}
|R(y,x_Q)|\lsim\frac{|y-x_Q|^{d+2\delta+3}}{|x-x_Q|^{2d+2\delta+3}},\  y\in Q,
\end{eqnarray*}
we have 
\begin{eqnarray*}
|T(a)(x)|&\lsim&\frac{\ell(Q)^{d+2\delta+3}}{|x-x_Q|^{2d+2\delta+3}}\left\|a\right\|_{\infty}\left\|\chi_{_{Q}}\right\|_1\\
&\lsim&|Q|^{-\frac{1}{q}}\left(\frac{\ell(Q)}{|x-x_Q|}\right)^{2d+2\delta+3}\\
&\lsim&|Q|^{-\frac{1}{q}}\left(\frac{\ell(Q)+|x-x_Q|}{\ell(Q)}\right)^{-2d-2\delta-3},
\end{eqnarray*}
since $|x-x_Q|\approx\ell(Q)+|x-x_Q|$. Thus,
\begin{eqnarray}
|T(a)(x)|\lsim|Q|^{-\frac{1}{q}}\left(1+\frac{|x-x_Q|}{\ell(Q)}\right)^{-2d-2\delta-3}, \label{propretoursur5}
\end{eqnarray}
for all $x\notin\widetilde{Q}$. Therefore, there exists a positive constant $c_1$ independent of the atom $a$ such that
\begin{eqnarray*}
\left\|\frac{1}{c_1}T(a)(x)\chi_{_{\widetilde{Q}}}\right\|_r\leq|Q|^{\frac{1}{r}-\frac{1}{q}}
\end{eqnarray*}
and 
\begin{eqnarray*}
\left|\frac{1}{c_1}T(a)(x)\right|\leq|Q|^{-\frac{1}{q}}\left(1+\frac{|x-x_Q|}{\ell(Q)}\right)^{-2d-2\delta-3},
\end{eqnarray*}
for all $x\notin\widetilde{Q}$, by (\ref{propretoursur3}) and (\ref{propretoursur5}). Also, it's clear that 
\begin{eqnarray*}
\int_{\mathbb{R}^d}x^{\beta}T(a)(x)dx=0, 
\end{eqnarray*}
for all multi-indexes $\beta$ with $|\beta|\leq\delta$, by (\ref{propretoursur2}). Hence $\frac{1}{c_1}T(a)$ is a $(q,r,\delta)$-molecule centered at $Q$.
\end{proof}

\begin{remark}\label{remarqretoursur}
Lemma \ref{propretoursur} also holds, for any $(a,Q)\in\mathcal{A}(q,s,d+2\delta+2)$ and any $1<r<\infty$, provided that $s\geq\max\left\{2;r\right\}$.
\end{remark}

\begin{thm}\label{theorretoursur}
Let $T$ be a Calder\'on-Zygmund operator satisfying (\ref{propretoursur1}) and (\ref{propretoursur2}). If $\frac{d}{d+\delta+1}<q\leq 1$, then $T$ extends to a bounded operator from $\mathcal{H}^{(q,p)}$ to $\mathcal{H}^{(q,p)}$. 
\end{thm}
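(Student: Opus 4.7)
The strategy is to combine the atomic decomposition of $\mathcal{H}^{(q,p)}$ (Theorem~\ref{thafonda}), the fact that $T$ sends atoms to multiples of molecules (Lemma~\ref{propretoursur}), and the molecular reconstruction (Theorem~\ref{thafondammolec2}), so as to establish $\|T(f)\|_{\mathcal{H}^{(q,p)}} \lsim \|f\|_{\mathcal{H}^{(q,p)}}$ on the dense subspace $\mathcal{H}_{fin}^{(q,p)}$, and then to conclude by density. Observe first that the hypothesis $q > d/(d+\delta+1)$ rewrites as $\delta + 1 > d(1/q-1)$, which forces $\delta \geq \lfloor d(1/q-1) \rfloor$; this is precisely what is needed in order to invoke Lemma~\ref{propretoursur}.

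Fix a real $r$ with $\max\{p,1\} < r < +\infty$ and $\eta$ with $0 < \eta < q$. Let $f \in \mathcal{H}_{fin}^{(q,p)}$ be a finite linear combination of $(q,\infty,d+2\delta+2)$-atoms, say $f = \sum_{n=0}^{j} \lambda_n a_n$ with $(a_n, Q^n) \in \mathcal{A}(q,\infty,d+2\delta+2)$; such combinations form a dense subspace of $\mathcal{H}^{(q,p)}$ by Theorem~\ref{thafonda}, since $d+2\delta+2 \geq \lfloor d(1/q-1) \rfloor$. Each atom $a_n$ belongs to $L_{\mathrm{comp}}^{2}(\mathbb{R}^d)$, so $T(a_n)\in L^2$ is unambiguously defined and $T(f) = \sum_n \lambda_n T(a_n)$ as an $L^2$ function. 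By Lemma~\ref{propretoursur}, for each $n$ the pair $(T(a_n)/c_1, Q^n)$ belongs to $\mathcal{M}\ell(q, r, \delta)$, where $c_1 > 0$ is independent of $a_n$. Applying Theorem~\ref{thafondammolec2} to the finite molecular decomposition $T(f) = \sum_n (c_1 \lambda_n)\bigl(T(a_n)/c_1\bigr)$ yields
\[
\|T(f)\|_{\mathcal{H}^{(q,p)}} \lsim \left\|\sum_{n=0}^{j}\left(\frac{|\lambda_n|}{\|\chi_{_{Q^n}}\|_q}\right)^{\eta}\chi_{_{Q^n}}\right\|_{\frac{q}{\eta},\frac{p}{\eta}}^{\frac{1}{\eta}},
\]
and taking the infimum over all such finite atomic decompositions of $f$, together with the equivalence of $\|\cdot\|_{\mathcal{H}_{fin}^{(q,p)}}$ and $\|\cdot\|_{\mathcal{H}^{(q,p)}}$ on $\mathcal{H}_{fin}^{(q,p)}$ (Theorem~\ref{thafondamfini}), gives $\|T(f)\|_{\mathcal{H}^{(q,p)}} \lsim \|f\|_{\mathcal{H}^{(q,p)}}$.

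To conclude, since $\mathcal{H}_{fin}^{(q,p)}$ is dense in the quasi-Banach space $\mathcal{H}^{(q,p)}$, the operator $T$ admits a unique bounded extension from $\mathcal{H}^{(q,p)}$ to itself. The crux of the argument is really Lemma~\ref{propretoursur}: the smoothness hypothesis~(\ref{propretoursur1}), combined with a Taylor expansion of $y \mapsto K(x,y)$ at $x_Q$ and the $(d+2\delta+2)$-order vanishing moments of the atom, delivers the required size and decay of $T(a)(x)$ outside $\widetilde{Q}$, while hypothesis~(\ref{propretoursur2}) supplies the $\delta$-order vanishing moments of $T(a)$. The only delicate point in assembling the argument is the bookkeeping of moment orders: one must choose atoms with sufficiently many vanishing moments (namely $d+2\delta+2$) so that Lemma~\ref{propretoursur} produces genuine $(q,r,\delta)$-molecules, to which Theorem~\ref{thafondammolec2} can then be applied.
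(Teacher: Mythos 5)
Your overall strategy is exactly the paper's: decompose $f$ into $(q,\infty,d+2\delta+2)$-atoms, use Lemma~\ref{propretoursur} to turn each $T(a_n)$ into a constant multiple of a $(q,r,\delta)$-molecule, reconstruct via Theorem~\ref{thafondammolec2}, pass to the infimum over finite decompositions, and conclude by density. The arithmetic $q>d/(d+\delta+1)\iff d(1/q-1)<\delta+1\Rightarrow\lfloor d(1/q-1)\rfloor\leq\delta$ is also correct and is the same observation the paper opens with.

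There is, however, one genuine gap in your final step. You work with $(q,\infty,d+2\delta+2)$-atoms and then invoke ``the equivalence of $\left\|\cdot\right\|_{\mathcal{H}_{fin}^{(q,p)}}$ and $\left\|\cdot\right\|_{\mathcal{H}^{(q,p)}}$ on $\mathcal{H}_{fin}^{(q,p)}$ (Theorem~\ref{thafondamfini})''. But for $r=+\infty$, Theorem~\ref{thafondamfini}(\ref{thafondamfini02}) only asserts that equivalence on $\mathcal{H}_{fin}^{(q,p)}\cap\mathcal{C}(\mathbb{R}^d)$, not on all of $\mathcal{H}_{fin}^{(q,p)}$; this restriction is not cosmetic (it reflects the well-known failure, already in the classical $H^1$ setting, of the finite $L^\infty$-atomic norm to be equivalent to the full norm on arbitrary finite combinations of $\infty$-atoms). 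So the inequality $\left\|T(f)\right\|_{\mathcal{H}^{(q,p)}}\lsim\left\|f\right\|_{\mathcal{H}_{fin}^{(q,p)}}\lsim\left\|f\right\|_{\mathcal{H}^{(q,p)}}$ is not justified for a general $f\in\mathcal{H}_{fin}^{(q,p)}$ as you have set things up. Two standard repairs, both used in the paper: either restrict to $f\in\mathcal{H}_{fin}^{(q,p)}\cap\mathcal{C}(\mathbb{R}^d)$, which is still dense in $\mathcal{H}^{(q,p)}$ (this is what the paper's proof does), or replace the $\infty$-atoms by $(q,r,d+2\delta+2)$-atoms with $\max\left\{2,p\right\}<r<+\infty$, which Remark~\ref{remarqretoursur} permits in Lemma~\ref{propretoursur} and for which Theorem~\ref{thafondamfini}(\ref{thafondamfini01}) gives the norm equivalence on all of $\mathcal{H}_{fin}^{(q,p)}$. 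With either modification your argument is complete and coincides with the paper's.
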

\begin{proof}
Noticing that $\frac{d}{d+\delta+1}<q\leq 1$ implies $\left\lfloor d\left(\frac{1}{q}-1\right)\right\rfloor\leq\delta$, we consider the space $\mathcal{H}_{fin}^{(q,p)}$ of finite linear combination of $(q,\infty,d+2\delta+2)$-atoms, and 
$f\in\mathcal{H}_{fin}^{(q,p)}\cap\mathcal{C}(\mathbb{R}^d)$. Then, there exist a finite sequence $\left\{(\textbf{a}_n, Q^n)\right\}_{n=0}^j$ of elements of $\mathcal{A}(q,\infty,d+2\delta+2)$ and a finite sequence of scalars $\left\{{\lambda}_n\right\}_{n=0}^j$ such that $f=\sum_{n=0}^j\lambda_n\textbf{a}_n$. Let $\max\left\{1;p\right\}<r<+\infty$. By Lemma \ref{propretoursur}, $\frac{1}{c_1}T(\textbf{a}_n)$ is a $(q,r,\delta)$-molecule centered at $Q^n$, for every $n\in\left\{0,1,\ldots,j\right\}$. Let us fix $0<\eta<q$. Clearly, we have  
\begin{eqnarray*}
\left\|\sum_{n=0}^j\left(\frac{|c_1\lambda_n|}{\left\|\chi_{_{Q^n}}\right\|_{q}}\right)^{\eta}\chi_{_{Q^{n}}}\right\|_{\frac{q}{\eta},\frac{p}{\eta}}^{\frac{1}{\eta}}<+\infty.
\end{eqnarray*}
Hence
\begin{eqnarray*}
T(f)=\sum_{n=0}^j\lambda_n T(\textbf{a}_n)=\sum_{n=0}^j (c_1\lambda_n)\left(\frac{1}{c_1}T(\textbf{a}_n)\right)\in\mathcal{H}^{(q,p)}
\end{eqnarray*}
and 
\begin{eqnarray*}
\left\|T(f)\right\|_{\mathcal{H}^{(q,p)}}\lsim\left\|\sum_{n=0}^j\left(\frac{|c_1\lambda_n|}{\left\|\chi_{_{Q^n}}\right\|_{q}}\right)^{\eta}\chi_{_{Q^{n}}}\right\|_{\frac{q}{\eta},\frac{p}{\eta}}^{\frac{1}{\eta}}=C\left\|\sum_{n=0}^j\left(\frac{|\lambda_n|}{\left\|\chi_{_{Q^n}}\right\|_{q}}\right)^{\eta}\chi_{_{Q^{n}}}\right\|_{\frac{q}{\eta},\frac{p}{\eta}}^{\frac{1}{\eta}},
\end{eqnarray*}
by Theorem \ref{thafondammolec2}. It follows that 
\begin{eqnarray*}
\left\|T(f)\right\|_{\mathcal{H}^{(q,p)}}\lsim\left\|f\right\|_{\mathcal{H}_{fin}^{(q,p)}}\lsim\left\|f\right\|_{\mathcal{H}^{(q,p)}},
\end{eqnarray*}
by Theorem \ref{thafondamfini}. Therefore, $T$ is bounded from $\mathcal{H}_{fin}^{(q,p)}\cap\mathcal{C}(\mathbb{R}^d)$ to $\mathcal{H}^{(q,p)}$ and the density of $\mathcal{H}_{fin}^{(q,p)}\cap\mathcal{C}(\mathbb{R}^d)$ in $\mathcal{H}^{(q,p)}$ with respect to the quasi-norm $\left\|\cdot\right\|_{\mathcal{H}^{(q,p)}}$ yields the result.
\end{proof}

By Remark \ref{remarqretoursur}, in the previous proof, we can also use the set $\mathcal{A}(q,r,d+2\delta+2)$, with $\max\left\{2;p\right\}<r<+\infty$, instead of $\mathcal{A}(q,\infty,d+2\delta+2)$. 

\begin{cor}
Let $T$ be a Calder\'on-Zygmund operator which satisfies (\ref{propretoursur1}) and (\ref{propretoursur2}), for any integer $\delta\geq0$. Then, for all $0<q\leq1$, $T$ extends to a bounded operator from $\mathcal{H}^{(q,p)}$ to $\mathcal{H}^{(q,p)}$.
\end{cor}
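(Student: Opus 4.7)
The plan is to deduce this corollary directly from Theorem \ref{theorretoursur} by choosing the smoothness/cancellation parameter $\delta$ large enough, depending on the given $q$. Indeed, Theorem \ref{theorretoursur} already gives the boundedness of $T$ from $\mathcal{H}^{(q,p)}$ to $\mathcal{H}^{(q,p)}$ under the single condition $\frac{d}{d+\delta+1}<q\leq 1$, provided $T$ satisfies (\ref{propretoursur1}) and (\ref{propretoursur2}) for that particular $\delta$. Since here we assume these two conditions hold for every integer $\delta\geq 0$, we are free to enlarge $\delta$ at will.

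First I would fix an arbitrary $q$ with $0<q\leq 1$. The threshold $\frac{d}{d+\delta+1}<q$ is equivalent to $\delta>\frac{d}{q}-d-1=d\bigl(\frac{1}{q}-1\bigr)-1$. Hence any integer $\delta_0\geq\left\lfloor d\bigl(\frac{1}{q}-1\bigr)\right\rfloor$ satisfies $\frac{d}{d+\delta_0+1}<q\leq1$; such a $\delta_0$ clearly exists and depends only on $q$ and $d$.

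Next, for this chosen $\delta_0$, the hypothesis of the corollary guarantees that $T$ satisfies both (\ref{propretoursur1}) with constant $C_{\delta_0}$ and the vanishing-moment condition (\ref{propretoursur2}) at level $\delta_0$. Therefore Theorem \ref{theorretoursur} applies with $\delta$ replaced by $\delta_0$, and yields that $T$ extends to a bounded operator from $\mathcal{H}^{(q,p)}$ into $\mathcal{H}^{(q,p)}$, with operator norm controlled by a constant depending on $q$, $p$, $d$ and $C_{\delta_0}$.

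There is no real obstacle here: the statement is a clean consequence of Theorem \ref{theorretoursur}, the only step being the observation that for every $0<q\leq 1$ the condition $\frac{d}{d+\delta+1}<q$ can be met by taking $\delta$ sufficiently large, which is allowed because the hypotheses are assumed to hold for all $\delta\geq 0$. Thus the proof reduces to a one-line invocation of the theorem after selecting $\delta_0$ as above.
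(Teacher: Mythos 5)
Your proof is correct and is essentially the argument the paper intends (the corollary is stated without proof, but the analogous Corollary \ref{retoursur2} is proved by exactly this one-line device): for a given $0<q\leq 1$ one picks an integer $\delta_0\geq\left\lfloor d\left(\frac{1}{q}-1\right)\right\rfloor$, which is equivalent to $\frac{d}{d+\delta_0+1}<q\leq 1$, and applies Theorem \ref{theorretoursur}. Nothing is missing.
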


\subsection{Convolution Operator}
Given a function $k$ defined and locally integrable on $\mathbb{R}^d\backslash\left\{0\right\}$, we say that a tempered distribution $K$ in $\mathbb{R}^d$ ($K\in\mathcal{S'}:=\mathcal{S'}(\mathbb{R}^d)$) coincides with the function $k$ on $\mathbb{R}^d\backslash\left\{0\right\}$, if 
\begin{eqnarray}
\left\langle K, \psi\right\rangle=\int_{\mathbb{R}^d}k(x)\psi(x)dx,\label{applicattheo090}
\end{eqnarray}
for all $\psi\in\mathcal{S}$, with $\text{supp}(\psi)\subset\mathbb{R}^d\backslash\left\{0\right\}$. It's clear that (\ref{applicattheo090}) is not valid for any $\psi\in\mathcal{S}$. Here, we are interested in tempered distributions $K$ in $\mathbb{R}^d$ that coincide with a function $k$ on $\mathbb{R}^d\backslash\left\{0\right\}$ and that have the form 
\begin{eqnarray}
\left\langle K, \psi\right\rangle=\lim_{j\rightarrow+\infty}\int_{|x|\geq\sigma_j}k(x)\psi(x)dx,\ \ \ \psi\in\mathcal{S},  \label{applicattheo091}
\end{eqnarray}
for some sequence $\sigma_j\downarrow 0$ as $j\rightarrow+\infty$ and independent of $\psi$. Also, we consider convolution operators $T$: $T(f)=K\ast f$, $f\in\mathcal{S}$. Thus, when $\widehat{K}\in L^{\infty}$, we have 
\begin{eqnarray}
T(f)(x)=\int_{\mathbb{R}^d}k(x-y)f(y)dy, \label{applicattheo093}
\end{eqnarray}
for all $f\in L^2$ with compact support and all $x\notin\text{supp}(f)$. For (\ref{applicattheo093}), see \cite{MA}, Chap. 3, section 3, p. 113. From now on, unless otherwise specified, the letter $K$ stands both for the tempered distribution $K$ and the associated function $k$. We recall the following theorem.

\begin{thm}[\cite{JD}, Theorem 5.1] \label{theoremsingaj}  Let $K$ be a tempered distribution in $\mathbb{R}^d$ which coincides with a locally integrable function on $\mathbb{R}^d\backslash\left\{0\right\}$ and is such that 
\begin{eqnarray}
|\widehat{K}(\xi)|\leq A, \label{applicattheo9}
\end{eqnarray} 
\begin{eqnarray}
\int_{|x|>2|y|}|K(x-y)-K(x)|dx\leq B,\ y\in\mathbb{R}^d. \label{applicattheo10}
\end{eqnarray}
then, for $1<r<+\infty$,
\begin{eqnarray*}
\left\|K\ast f\right\|_r\leq C_r\left\|f\right\|_r
\end{eqnarray*} 
and
\begin{eqnarray*}
\left|\left\{x\in\mathbb{R}^d:\ |K\ast f(x)|>\lambda\right\}\right|\leq\frac{C}{\lambda}\left\|f\right\|_1.
\end{eqnarray*} 
\end{thm}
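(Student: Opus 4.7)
The plan is to follow the classical Calder\'on--Zygmund strategy: establish $L^2$-boundedness, then the weak type $(1,1)$ inequality, then apply Marcinkiewicz interpolation and finally duality. First I would use Plancherel's theorem together with hypothesis (\ref{applicattheo9}) to obtain the $L^2$ bound $\|K\ast f\|_2 \leq A\|f\|_2$ for every $f \in \mathcal{S}$; here one needs a brief justification that $K\ast f$, initially defined as the Fourier transform of $\widehat{K}\widehat{f}$, coincides with a convolution that extends to an $L^2$-bounded operator.

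The heart of the argument is the weak-type $(1,1)$ inequality. For $f \in L^1 \cap L^2$ and $\lambda>0$, I would apply the Calder\'on--Zygmund decomposition at height $\lambda$, producing $f = g + b$, where $g$ satisfies $\|g\|_\infty \lsim \lambda$ and $\|g\|_1 \leq \|f\|_1$, and $b = \sum_j b_j$ with each $b_j$ supported in a cube $Q_j$, having mean zero and $\|b_j\|_1 \lsim \lambda |Q_j|$, while the cubes $\{Q_j\}$ are pairwise disjoint with $\sum_j |Q_j| \lsim \lambda^{-1}\|f\|_1$. Then I split
\begin{equation*}
\left|\left\{x:\ |K\ast f(x)|>\lambda\right\}\right| \leq \left|\left\{x:\ |K\ast g(x)|>\lambda/2\right\}\right| + \left|\left\{x:\ |K\ast b(x)|>\lambda/2\right\}\right|.
\end{equation*}
The good part is controlled by the $L^2$ estimate and Chebyshev's inequality, using $\|g\|_2^2 \leq \|g\|_\infty \|g\|_1 \lsim \lambda \|f\|_1$.

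The bad part is where the H\"ormander-type condition (\ref{applicattheo10}) enters and is the main obstacle. Let $Q_j^\ast$ denote the cube $2\sqrt d Q_j$ centered at $y_j$, the center of $Q_j$, and set $E^\ast = \bigcup_j Q_j^\ast$. Since $|E^\ast| \lsim \lambda^{-1}\|f\|_1$, it suffices to estimate $|\{x \notin E^\ast:\ |K\ast b(x)|>\lambda/2\}|$. For $x \notin Q_j^\ast$, the vanishing moment of $b_j$ allows writing
\begin{equation*}
(K\ast b_j)(x) = \int_{Q_j}\bigl[K(x-y)-K(x-y_j)\bigr]b_j(y)\,dy,
\end{equation*}
so after integrating in $x$ over the complement of $Q_j^\ast$ and invoking (\ref{applicattheo10}) via a change of variables, one obtains $\int_{\mathbb{R}^d\setminus E^\ast}|K\ast b(x)|\,dx \lsim B\|b\|_1 \lsim \|f\|_1$. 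Another application of Chebyshev then yields the desired weak bound. A density argument extends it from $L^1 \cap L^2$ to all of $L^1$.

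Once the weak $(1,1)$ estimate and strong $(2,2)$ estimate are in hand, Marcinkiewicz interpolation immediately produces the strong $(r,r)$ bound for $1<r<2$. For the range $2<r<+\infty$, I would invoke duality: the formal adjoint of convolution with $K$ is convolution with $\widetilde{K}$, where $\widetilde{K}(x) = \overline{K(-x)}$. Since $|\widehat{\widetilde K}| = |\widehat{K}|$ and the H\"ormander condition (\ref{applicattheo10}) is stable under the reflection $x \mapsto -x$, the hypotheses transfer to $\widetilde K$, so the already established $L^{r'}$ bound for $\widetilde K\ast\cdot$ yields by duality the $L^r$ bound for $K\ast\cdot$. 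The principal technical point to handle carefully is the truncation issue in defining $K\ast f$ via (\ref{applicattheo091}) on the various dense subclasses used at each stage, so that all the manipulations (Plancherel, adjoint, use of the pointwise kernel representation) are legitimate.
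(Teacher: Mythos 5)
The paper does not prove this statement; it is quoted verbatim as \cite{JD}, Theorem 5.1, and your proposal reproduces the standard Calder\'on--Zygmund argument that is precisely the proof given in that reference (Plancherel for the $L^2$ bound, Calder\'on--Zygmund decomposition plus the H\"ormander condition for weak $(1,1)$, Marcinkiewicz interpolation for $1<r<2$, and duality via $\widetilde{K}(x)=\overline{K(-x)}$ for $2<r<\infty$). The proposal is correct and takes essentially the same approach as the cited source.
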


Notice that $C_r:=C(d,r,A,B)$ (see \cite{JD}, p. 110) and (\ref{applicattheo9}) is understood as $|\widehat{K}(\xi)|\leq A$ for almost all $\xi\in\mathbb{R}^d$. For our results, we will need conditions stronger than (\ref{applicattheo10}) which are recalled in the following two remarks.

\begin{remark}[\cite{JD}, Proposition 5.2] \label{remarqsingaj}  The H\"ormander condition (\ref{applicattheo10}) holds, if for every $x\neq0$
\begin{eqnarray}
|\nabla K(x)|\leq\frac{C}{|x|^{d+1}}\ , \label{applicattheo11}
\end{eqnarray}
where $C>0$ is a constant independent of $x$. 
\end{remark}

\begin{remark}\label{remarqsingaj1} 
If there exist an integer $\delta>0$ and a constant $C_\delta>0$ such that 
\begin{eqnarray}
|\partial^{\beta}K(x)|\leq\frac{C_\delta}{|x|^{d+|\beta|}}\ , \label{applicattheo101}
\end{eqnarray}
for all multi-indexes $\beta$ with $|\beta|\leq\delta$ and all $x\neq0$, then (\ref{applicattheo11}) holds. Therefore, (\ref{applicattheo10}) is also satisfied. 
\end{remark}

This follows from the definition of $|\nabla K(x)|^2$.

\begin{thm} \label{theoremsing3}
Let $K$ be a tempered distribution in $\mathbb{R}^d$ which coincides wich a locally integrable function on $\mathbb{R}^d\backslash\left\{0\right\}$ and is such that 
\begin{eqnarray*}
|\widehat{K}(\xi)|\leq A,
\end{eqnarray*} 
\begin{eqnarray}
|\nabla K(x)|\leq\frac{B}{|x|^{d+1}}\ , \label{applicattheo12}
\end{eqnarray}
for all $x\neq0$. If $\frac{d}{d+1}<q\leq1$, then the operator $T(f)=K\ast f$, for all $f\in\mathcal{S}$, extends to a bounded operator from $\mathcal{H}^{(q,p)}$ to $(L^q,\ell^p)$.
\end{thm}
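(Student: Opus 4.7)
The plan is to adapt the proof of Theorem \ref{theoremsing1}, replacing the H\"older kernel condition (\ref{integralsing2}) by a mean-value-theorem argument driven by the gradient bound (\ref{applicattheo12}). Because $q > d/(d+1)$ forces $\left\lfloor d(1/q-1) \right\rfloor = 0$, it is enough to work with atoms of vanishing integral, i.e.\ with $\delta = 0$.

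First I would use Remark \ref{remarqsingaj} to promote (\ref{applicattheo12}) to the H\"ormander condition (\ref{applicattheo10}); combined with $|\widehat{K}|\leq A$, Theorem \ref{theoremsingaj} then gives $\|T(g)\|_r \lsim \|g\|_r$ for every $1 < r < +\infty$. Next, fixing $r > \max\{2,p\}$ and letting $\mathcal{H}^{(q,p)}_{fin}$ denote the space of finite linear combinations of $(q,r,0)$-atoms, I take $f = \sum_{n=0}^{j}\lambda_n\textbf{a}_n \in \mathcal{H}^{(q,p)}_{fin}$ with $\textbf{a}_n$ supported in $Q^n$, of center $x_n$ and side-length $\ell_n$, and split
\[
|T(f)(x)| \leq \sum_{n=0}^{j} |\lambda_n| |T(\textbf{a}_n)(x)| \chi_{_{\widetilde{Q^n}}}(x) + \sum_{n=0}^{j} |\lambda_n| |T(\textbf{a}_n)(x)| \chi_{_{\mathbb{R}^d\setminus\widetilde{Q^n}}}(x).
\]

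The substantive step is the pointwise far-field bound. For $x \notin \widetilde{Q^n}$ and $y \in Q^n$ one has $|y - x_n| \leq \sqrt{d}\,\ell_n/2 \leq |x - x_n|/2$, so the segment joining $x - x_n$ to $x - y$ stays at distance $\approx |x - x_n|$ from the origin. The vanishing moment of $\textbf{a}_n$, the mean value theorem, and (\ref{applicattheo12}) therefore yield
\[
|T(\textbf{a}_n)(x)| = \left| \int_{Q^n} [K(x-y) - K(x-x_n)] \textbf{a}_n(y)\,dy \right| \lsim \frac{\ell_n}{|x-x_n|^{d+1}} \int_{Q^n} |\textbf{a}_n(y)|\,dy.
\]
H\"older's inequality and the size condition $\|\textbf{a}_n\|_r \leq |Q^n|^{1/r - 1/q}$ give $\int_{Q^n} |\textbf{a}_n| \lsim |Q^n|^{1 - 1/q} \approx |Q^n|/\|\chi_{_{Q^n}}\|_q$, so that
\[
|T(\textbf{a}_n)(x)| \lsim \frac{1}{\|\chi_{_{Q^n}}\|_q}\left(\frac{\ell_n}{|x-x_n|}\right)^{d+1} \lsim \frac{[\mathfrak{M}(\chi_{_{Q^n}})(x)]^{(d+1)/d}}{\|\chi_{_{Q^n}}\|_q},
\]
by (\ref{rectif3}). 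This reproduces estimate (\ref{applicattheo6}) with $\mu$ replaced by $1$.

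From here the argument is identical to the concluding part of Theorem \ref{theoremsing1}: I fix $0 < \eta < q$ so that $q/\eta > 1$ and $p/\eta > 1$, control the near piece using the $L^r$-boundedness of $T$ and the truncated atomic scheme of Theorem 4.6 in \cite{AbFt}, and control the far piece by applying Proposition \ref{operamaxima} with $u = (d+1)/d$. Both pieces are then dominated by $\left\|\sum_{n=0}^{j} \left(|\lambda_n|/\|\chi_{_{Q^n}}\|_q\right)^{\eta} \chi_{_{Q^n}}\right\|_{q/\eta,\, p/\eta}^{1/\eta}$, and Theorem \ref{thafondamfini} gives $\|T(f)\|_{q,p} \lsim \|f\|_{\mathcal{H}^{(q,p)}_{fin}} \lsim \|f\|_{\mathcal{H}^{(q,p)}}$. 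Density of $\mathcal{H}^{(q,p)}_{fin}$ in $\mathcal{H}^{(q,p)}$ then yields the desired extension. The only genuine difficulty is the far-field pointwise bound on $T(\textbf{a}_n)$: since the Calder\'on--Zygmund framework of Theorem \ref{theoremsing1} is not directly applicable (no size condition on $K$ is hypothesised here), one must carefully use the mean value theorem on the segment $[x - y,\, x - x_n]$ and verify that it avoids the origin. Once this is done, the rest of the proof is lifted verbatim from Theorem \ref{theoremsing1}.
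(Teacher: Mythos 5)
Your proposal is correct and follows essentially the same route as the paper: $L^r$-boundedness via Remark \ref{remarqsingaj} and Theorem \ref{theoremsingaj} for the local part, and the mean value theorem applied to $K(x-x_n)-K(x-y)$ on a segment at distance at least $\frac{1}{2}|x-x_n|$ from the origin, combined with (\ref{rectif3}) and Proposition \ref{operamaxima}, for the far part. The only cosmetic difference is that you fix $\delta=0$ while the paper keeps a generic $\delta\geq\left\lfloor d\left(\frac{1}{q}-1\right)\right\rfloor$ (which equals $0$ under the hypothesis $\frac{d}{d+1}<q\leq 1$ anyway).
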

\begin{proof}
Let $\delta\geq\left\lfloor d\left(\frac{1}{q}-1\right)\right\rfloor$ be an integer, $r>\max\left\{2;p\right\}$ be a real and $\mathcal{H}_{fin}^{(q,p)}$ be the space of finite linear combination of $(q,r,\delta)$-atoms. Fix $f\in\mathcal{H}_{fin}^{(q,p)}$. Then, there exist a finite sequence $\left\{(\textbf{a}_n, Q^n)\right\}_{n=0}^j$ of elements of $\mathcal{A}(q,r,\delta)$ and a finite sequence of scalars $\left\{{\lambda}_n\right\}_{n=0}^j$ such that $f=\sum_{n=0}^j\lambda_n\textbf{a}_n$. Moreover, $T$ is bounded on $L^r$, by Remark \ref{remarqsingaj} and Theorem \ref{theoremsingaj}. Set $\widetilde{Q^n}:=2\sqrt{d}Q^n$, $n\in\left\{0,1,\ldots,j\right\}$, and denote by $x_n$ and $\ell_n$ respectively the center and side-length of $Q^n$. We have 
\begin{eqnarray*}
\left\|T(f)\right\|_{q,p}\lsim\left\|\sum_{n=0}^j|\lambda_n||T(\textbf{a}_n)|\chi_{_{\widetilde{Q^n}}}\right\|_{q,p}+\left\|\sum_{n=0}^j|\lambda_n||T(\textbf{a}_n)|\chi_{_{\mathbb{R}^d\backslash{\widetilde{Q^n}}}}\right\|_{q,p}.
\end{eqnarray*}
Set $$I=\left\|\sum_{n=0}^j|\lambda_n||T(\textbf{a}_n)|\chi_{_{\widetilde{Q^n}}}\right\|_{q,p}\ \text{ and }\ J=\left\|\sum_{n=0}^j|\lambda_n||T(\textbf{a}_n)|\chi_{_{\mathbb{R}^d\backslash{\widetilde{Q^n}}}}\right\|_{q,p}.$$
Let us fix $0<\eta<q$. Arguing as in the proof of Theorem \ref{theoremsing1}, we have
\begin{eqnarray*}
I\leq\left\|\sum_{n=0}^j|\lambda_n|^{\eta}\left(|T(\textbf{a}_n)|\chi_{_{\widetilde{Q^n}}}\right)^{\eta}\right\|_{\frac{q}{\eta},\frac{p}{\eta}}^{\frac{1}{\eta}}\lsim\left\|\sum_{n=0}^j\left(\frac{|\lambda_n|}{\left\|\chi_{_{Q^n}}\right\|_{q}}\right)^{\eta}\chi_{_{Q^n}}\right\|_{\frac{q}{\eta},\frac{p}{\eta}}^{\frac{1}{\eta}}.
\end{eqnarray*}
To estimate $J$, it suffices to show that 
\begin{eqnarray}
|T(\textbf{a}_{n})(x)|\lsim\frac{\left[\mathfrak{M}(\chi_{_{Q^n}})(x)\right]^{\frac{d+1}{d}}}{\left\|\chi_{_{Q^n}}\right\|_q}\ ,\label{applicattheo13}
\end{eqnarray}
for all $x\notin\widetilde{Q^n}$. 
For this, we consider $x\notin\widetilde{Q^n}$. It's clear that $x\notin Q^n$. Hence $x-y\neq0$ and $K(x-y)$ is well defined, for all $y\in Q^n$. Thus,
\begin{eqnarray*}
|T(\textbf{a}_n)(x)|&=&|K\ast\textbf{a}_n(x)|=\left|\int_{Q^n}K(x-y)\textbf{a}_n(y)dy\right|\\
&\leq&\int_{Q^n}|K(x-x_n)-K(x-y)||\textbf{a}_n(y)|dy,
\end{eqnarray*}
by the vanishing condition of the atom $\textbf{a}_n$. Furthermore, by the mean value Theorem,
\begin{eqnarray*}
|K(x-x_n)-K(x-y)|=|\nabla K(\xi)||x_n-y|,
\end{eqnarray*}
where $\xi=\theta(x-x_n)+(1-\theta)(x-y)$, for some $0<\theta<1$. Also, $|x-x_n|>2|y-x_n|$ and $|\xi|>\frac{1}{2}|x-x_n|>0$, for all $y\in Q^n$. Therefore,
\begin{eqnarray*}
|T(\textbf{a}_n)(x)|\lsim\int_{Q^n}\frac{|x_n-y|}{|x_n-x|^{d+1}}|\textbf{a}_n(y)|dy.
\end{eqnarray*}
Thus, arguing as in the proof of Theorem \ref{theoremsing1}, we obtain 
\begin{eqnarray*}
|T(\textbf{a}_{n})(x)|\lsim\frac{\left[\mathfrak{M}(\chi_{_{Q^n}})(x)\right]^{\frac{d+1}{d}}}{\left\|\chi_{_{Q^n}}\right\|_q}.
\end{eqnarray*}
This proves (\ref{applicattheo13}). 
 Proceeding as in the proof of Theorem 4.6 in \cite{AbFt}, we have   
\begin{eqnarray*}
J\lsim\left\|\sum_{n=0}^j|\lambda_n|\frac{\left[\mathfrak{M}(\chi_{_{Q^n}})\right]^{\frac{d+1}{d}}}{\left\|\chi_{_{Q^n}}\right\|_q}\right\|_{q,p}\lsim\left\|\sum_{n=0}^j\left(\frac{|\lambda_n|}{\left\|\chi_{_{Q^n}}\right\|_{q}}\right)^{\eta}\chi_{_{Q^{n}}}\right\|_{\frac{q}{\eta},\frac{p}{\eta}}^{\frac{1}{\eta}}.
\end{eqnarray*}
Hence 
\begin{eqnarray*}
\left\|T(f)\right\|_{q,p}\lsim\left\|\sum_{n=0}^j\left(\frac{|\lambda_n|}{\left\|\chi_{_{Q^n}}\right\|_{q}}\right)^{\eta}\chi_{_{Q^n}}\right\|_{\frac{q}{\eta},\frac{p}{\eta}}^{\frac{1}{\eta}}.
\end{eqnarray*}
And we finish the proof as the one of Theorem \ref{theoremsing1}.
\end{proof}

If $K$ is sufficiently smooth, the conclusion of Theorem \ref{theoremsing3} can be improved in the sense of values of $q$. More precisely, we have the following result which is proved as Theorem \ref{theoremsing3} with appropriate Taylor formula. The proof is not given.

\begin{thm} \label{theoremsing4}
Let $K$ be a tempered distribution in $\mathbb{R}^d$ that coincides with a locally integrable function on $\mathbb{R}^d\backslash\left\{0\right\}$ and is such that 
\begin{eqnarray*}
|\widehat{K}(\xi)|\leq A,
\end{eqnarray*} 
and, there exist an integer $\delta>0$ and a constant $B>0$ such that
\begin{eqnarray}
|\partial^{\beta}K(x)|\leq\frac{B}{|x|^{d+|\beta|}}\ , \label{applicattheo15}
\end{eqnarray}
for all $x\neq0$ and all multi-indexes $\beta$ with $|\beta|\leq\delta$. If $\frac{d}{d+\delta}<q\leq 1$, then the operator $T(f)=K\ast f$, for all $f\in\mathcal{S}$, extends to a bounded operator from $\mathcal{H}^{(q,p)}$ to $(L^q,\ell^p)$.
\end{thm}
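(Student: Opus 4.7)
The plan is to mirror the proof of Theorem \ref{theoremsing3} step by step, replacing the mean-value estimate (which exploited one derivative of $K$) by a Taylor expansion of degree $\delta-1$ whose remainder involves only derivatives of $K$ of order $\delta$, which are controlled by (\ref{applicattheo15}). The assumption $\frac{d}{d+\delta}<q\leq1$ yields $\lfloor d(1/q-1)\rfloor\leq\delta-1$, so via Theorem \ref{thafondamfini} I may reduce to estimating $\|T(f)\|_{q,p}$ for a finite decomposition $f=\sum_{n=0}^{j}\lambda_n\textbf{a}_n$ into $(q,r,\delta-1)$-atoms (with $r>\max\{2,p\}$), whose vanishing moments of order $\leq\delta-1$ are exactly what is needed to cancel the Taylor polynomial below.

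Following Theorem \ref{theoremsing3}, I would split $\|T(f)\|_{q,p}\lsim I+J$, where $I$ is the part supported in $\widetilde{Q^n}=2\sqrt{d}Q^n$ and $J$ the part supported in its complement. The term $I$ needs no new argument: Remark \ref{remarqsingaj1} provides H\"ormander's condition (\ref{applicattheo10}) from (\ref{applicattheo15}), hence $T$ is bounded on $L^r$ by Theorem \ref{theoremsingaj}; fixing $0<\eta<q$ small enough, the reasoning from the proof of Theorem 4.6 of \cite{AbFt} then yields $I\lsim\left\|\sum_{n=0}^{j}(|\lambda_n|/\|\chi_{_{Q^n}}\|_q)^\eta\chi_{_{Q^{n}}}\right\|_{q/\eta,p/\eta}^{1/\eta}$. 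For $J$, the key new pointwise estimate to establish is
\begin{eqnarray*}
|T(\textbf{a}_n)(x)|\lsim\frac{[\mathfrak{M}(\chi_{_{Q^n}})(x)]^{(d+\delta)/d}}{\|\chi_{_{Q^n}}\|_q},\qquad x\notin\widetilde{Q^n},
\end{eqnarray*}
which replaces (\ref{applicattheo13}) with exponent $(d+\delta)/d$ in place of $(d+1)/d$. Granted this, Proposition \ref{operamaxima} with $u=(d+\delta)/d>1$ (after choosing $\eta$ so that $q/\eta>1$) bounds $J$ by the same quantity as $I$, and the conclusion follows by combining with Theorem \ref{thafondamfini} and the density of $\mathcal{H}_{fin}^{(q,p)}$ in $\mathcal{H}^{(q,p)}$.

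The principal work is therefore establishing the pointwise bound on $T(\textbf{a}_n)$. Applying Taylor's formula in $y$ to $K(x-y)$ around $x_n$ up to order $\delta-1$, the polynomial part integrates against $\textbf{a}_n$ to zero by the vanishing moments, leaving
\begin{eqnarray*}
T(\textbf{a}_n)(x)=\int_{Q^n}R(y,x_n)\,\textbf{a}_n(y)\,dy,
\end{eqnarray*}
where $R(y,x_n)$ is a sum over $|\beta|=\delta$ of terms proportional to $(y-x_n)^\beta(\partial^\beta K)(x-\theta y-(1-\theta)x_n)$ integrated over $\theta\in[0,1]$. Since $y\in Q^n$, the intermediate point $\theta y+(1-\theta)x_n$ lies in $Q^n$; combined with $x\notin\widetilde{Q^n}$, a simple geometric check gives $|x-x_n|\geq 2|y-x_n|$ and thus $|x-\theta y-(1-\theta)x_n|\geq\tfrac{1}{2}|x-x_n|$. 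Hypothesis (\ref{applicattheo15}) with $|\beta|=\delta$ then yields $|R(y,x_n)|\lsim\ell_n^{\delta}/|x-x_n|^{d+\delta}$; combining with $\int_{Q^n}|\textbf{a}_n|\lsim|Q^n|^{1-1/q}$ (H\"older plus the size condition on atoms) and the bound $\ell_n^{d+\delta}/|x-x_n|^{d+\delta}\lsim[\mathfrak{M}(\chi_{_{Q^n}})(x)]^{(d+\delta)/d}$ from (\ref{rectif3}) produces the required estimate. The main technical obstacle is therefore purely this Taylor remainder bound and the geometric verification that the intermediate point stays a definite distance away from $x$; no functional-analytic input beyond what was used in Theorem \ref{theoremsing3} is needed.
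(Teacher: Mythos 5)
Your proposal is correct and is essentially the proof the paper intends: the paper itself only remarks that Theorem \ref{theoremsing4} "is proved as Theorem \ref{theoremsing3} with appropriate Taylor formula" and omits the details, and your Taylor expansion of order $\delta-1$ with remainder controlled by (\ref{applicattheo15}), the geometric bound $|x-\theta y-(1-\theta)x_n|\geq\frac{1}{2}|x-x_n|$, and the resulting pointwise estimate with exponent $\frac{d+\delta}{d}$ are exactly the intended modifications (compare the paper's proof of Theorem \ref{theoremsing0}). The hypothesis $\frac{d}{d+\delta}<q$ is correctly identified both as guaranteeing $\left\lfloor d\left(\frac{1}{q}-1\right)\right\rfloor\leq\delta-1$ and as making $q\cdot\frac{d+\delta}{d}>1$ so that Proposition \ref{operamaxima} applies in the estimate of $J$.
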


We give some consequences of Theorem \ref{theoremsing4}.

\begin{cor}\label{corollsing4}
Let $K$ be a tempered distribution in $\mathbb{R}^d$ which coincides with a locally integrable function on $\mathbb{R}^d\backslash\left\{0\right\}$ and is such that 
\begin{eqnarray*}
|\widehat{K}(\xi)|\leq A,
\end{eqnarray*} 
and, for any integer $\delta>0$, satisfies (\ref{applicattheo15}). Then, for all $0<q\leq 1$, the operator $T(f)=K\ast f$, for all $f\in\mathcal{S}$, extends to a bounded operator from $\mathcal{H}^{(q,p)}$ to $(L^q,\ell^p)$.
\end{cor}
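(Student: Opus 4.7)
The approach is a direct reduction to Theorem \ref{theoremsing4}. Note that Theorem \ref{theoremsing4} gives the boundedness $\mathcal{H}^{(q,p)} \to (L^q,\ell^p)$ provided $q > \frac{d}{d+\delta}$, and under the present hypothesis $K$ satisfies the derivative estimate (\ref{applicattheo15}) for \emph{every} positive integer $\delta$. Since $\frac{d}{d+\delta} \to 0$ as $\delta \to +\infty$, any fixed $q \in (0,1]$ can be captured by choosing $\delta$ sufficiently large.

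Concretely, fix $0 < q \leq 1$. The inequality $\frac{d}{d+\delta} < q$ is equivalent to $\delta > d\left(\frac{1}{q}-1\right)$, so I would pick any integer $\delta$ satisfying this (for instance $\delta := \left\lfloor d\left(\frac{1}{q}-1\right)\right\rfloor + 1$). By hypothesis, (\ref{applicattheo15}) holds for this particular $\delta$ with some constant $B = B_\delta > 0$, while the Fourier bound $|\widehat{K}(\xi)| \leq A$ is given outright. Hence all the hypotheses of Theorem \ref{theoremsing4} are met for the pair $(q,\delta)$, and the theorem immediately yields that $T(f) = K \ast f$ extends to a bounded operator from $\mathcal{H}^{(q,p)}$ to $(L^q,\ell^p)$.

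There is no serious obstacle here: the argument is structurally identical to the passage from Theorem \ref{theoremsing0} to Corollary \ref{retoursur2}, where exactly the same " $\delta$ (resp.\ $m$) large enough" device is used. The only point worth noting is that the operator norm of $T$ from $\mathcal{H}^{(q,p)}$ to $(L^q,\ell^p)$ furnished by this argument will depend on $q$, since the choice of $\delta$ and therefore the constant $B_\delta$ depend on $q$; this is consistent with what happens in the classical Hardy space setting.
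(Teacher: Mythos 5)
Your proof is correct and is exactly the argument the paper intends: the corollary is stated as an immediate consequence of Theorem \ref{theoremsing4}, obtained by choosing, for a fixed $q$, an integer $\delta>d\left(\frac{1}{q}-1\right)$ so that $\frac{d}{d+\delta}<q$, precisely as in the paper's proof of the analogous Corollary \ref{retoursur2}. Your remark that the resulting operator norm depends on $q$ through $B_\delta$ is a fair and accurate observation.
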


\begin{cor}\label{appliHilberttransf1}
For $d=1$, the Hilbert transform $H$ extends to a bounded operator from $\mathcal{H}^{(q,p)}$ to $(L^q,\ell^p)$, for all $0<q\leq 1$.
\end{cor}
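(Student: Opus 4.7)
The strategy is to exhibit the Hilbert transform as a convolution operator $T(f)=K\ast f$ whose kernel $K$ satisfies the hypotheses of Corollary \ref{corollsing4}, and then invoke that corollary directly. Recall that for $d=1$, the Hilbert transform is defined on $\mathcal{S}(\mathbb{R})$ by $H(f)=K\ast f$, where $K$ is the tempered distribution given by the principal value
\[
\langle K,\psi\rangle = \frac{1}{\pi}\lim_{j\to+\infty}\int_{|x|\geq \sigma_j}\frac{\psi(x)}{x}\,dx,
\]
for any sequence $\sigma_j\downarrow 0$. Thus $K$ has exactly the form (\ref{applicattheo091}) and coincides on $\mathbb{R}\setminus\{0\}$ with the locally integrable function $k(x)=\frac{1}{\pi x}$.

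Next I would verify the two conditions required by Corollary \ref{corollsing4}. For the Fourier-boundedness condition, it is a classical fact that $\widehat{K}(\xi)=-i\,\mathrm{sgn}(\xi)$, so $|\widehat{K}(\xi)|\leq 1$ almost everywhere, which gives (\ref{applicattheo9}) with $A=1$. For the derivative condition (\ref{applicattheo15}), a direct computation on $\mathbb{R}\setminus\{0\}$ gives
\[
\partial^{\beta} K(x)=\frac{(-1)^{\beta}\beta!}{\pi\,x^{\beta+1}},
\]
for every integer $\beta\geq 0$, hence $|\partial^{\beta}K(x)|\leq \frac{C_\beta}{|x|^{1+\beta}}=\frac{C_\beta}{|x|^{d+|\beta|}}$ with $d=1$. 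Consequently, for \emph{any} integer $\delta>0$, the estimate (\ref{applicattheo15}) holds for all multi-indices $\beta$ with $|\beta|\leq \delta$.

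With both conditions checked for every $\delta>0$, Corollary \ref{corollsing4} applies and yields the conclusion: for all $0<q\leq 1$ (and all $q\leq p<+\infty$), the operator $H(f)=K\ast f$ initially defined on $\mathcal{S}(\mathbb{R})$ extends to a bounded operator from $\mathcal{H}^{(q,p)}$ to $(L^q,\ell^p)$. There is essentially no obstacle here since the corollary is tailor-made for this situation; the only point worth spelling out carefully is that the principal-value representation of $K$ genuinely fits the framework (\ref{applicattheo091}), so that the formula $H(f)(x)=\int k(x-y)f(y)\,dy$ is valid for $f\in L^2$ of compact support and $x\notin\mathrm{supp}(f)$, as used throughout Section 4.2.
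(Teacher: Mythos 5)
Your proposal is correct and follows essentially the same route as the paper: identify the Hilbert kernel $K(x)=\frac{1}{\pi x}$, verify $|\widehat{K}(\xi)|\leq 1$ via $\widehat{K}(\xi)=-i\,\mathrm{sgn}(\xi)$, check the derivative bounds $|\partial^{\beta}K(x)|\lesssim |x|^{-1-\beta}$ for every order, and apply Corollary \ref{corollsing4}. The only (harmless) difference is that you make the derivative computation explicit, whereas the paper simply asserts the estimate.
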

\begin{proof}
 We recall that the Hilbert transform $H(f)$ of $f\in\mathcal{S}$ is defined by  
\begin{eqnarray}
H(f)(x)=K\ast f(x)=\lim_{\epsilon\rightarrow0}H^{\epsilon}(f)(x),\ \ x\in\mathbb{R}, \label{inegHilberttransf1}
\end{eqnarray}
where $K(x)=\frac{1}{\pi}\frac{1}{x}$ and
\begin{eqnarray*}
H^{\epsilon}(f)(x)=\frac{1}{\pi}\int_{|y|\geq\epsilon}\frac{f(x-y)}{y}dy=\frac{1}{\pi}\int_{|x-y|\geq\epsilon}\frac{f(y)}{x-y}dy. 
\end{eqnarray*}
See \cite{LGkos}, Definition 4.1.1, p. 250. Also, 
\begin{eqnarray}
\widehat{K}(\xi)=-i\ \text{sgn}(\xi), \label{inegHilberttransf5}
\end{eqnarray}
for $\xi\in\mathbb{R}$, where $\text{sgn}(\xi)=\left\{\begin{array}{lll}\frac{\xi}{|\xi|}\ ,&\text{ if }&\xi\neq 0\\
0\ ,&\text{ if }&\xi=0.\end{array}\right.$\\ For (\ref{inegHilberttransf5}), see \cite{LGkos}, (4.1.11), p. 253.  
 Hence 
$|\widehat{K}(\xi)|\leq 1,$
for all $\xi\in\mathbb{R}$, and, for any integer $\delta>0$, 
\begin{eqnarray}
|\partial^{\beta}K(x)|\leq\frac{C(\delta)}{|x|^{1+\beta}}\ , \label{inegHilberttransf6}
\end{eqnarray}
for all $x\neq0$ and all $\beta\leq\delta$. Thus, the distribution $K$ satisfies the conditions of Corollary \ref{corollsing4} and the result follows.
\end{proof}

\begin{cor}\label{appliRiesztransf1}
The Riesz transforms $R_j$, $1\leq j\leq d$, extend to bounded operators from $\mathcal{H}^{(q,p)}$ to $(L^q,\ell^p)$, for every $0<q\leq 1$.
\end{cor}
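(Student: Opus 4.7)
The plan is to model the argument precisely on the proof of Corollary \ref{appliHilberttransf1} and reduce the statement to Corollary \ref{corollsing4}. Fix $1\leq j\leq d$. The Riesz transform $R_j$ is the convolution operator $R_j(f)=K_j\ast f$ for $f\in\mathcal{S}$, where $K_j$ is the tempered distribution that coincides on $\mathbb{R}^d\setminus\{0\}$ with the locally integrable function
\[
K_j(x)=c_d\,\frac{x_j}{|x|^{d+1}},\qquad x\neq 0,
\]
for a suitable constant $c_d$, and which arises as the principal-value limit of (\ref{applicattheo091}) with $\sigma_n\downarrow 0$. This places us exactly within the framework of the convolution operators considered in Section 4.2.

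Next I would verify the Fourier-side hypothesis. It is classical that $\widehat{K_j}(\xi)=-i\,\xi_j/|\xi|$ for almost every $\xi\in\mathbb{R}^d$, so
\[
|\widehat{K_j}(\xi)|\leq 1 \quad \text{for a.e. } \xi\in\mathbb{R}^d,
\]
which gives (\ref{applicattheo9}) with $A=1$. I would then verify the derivative bounds. Since $K_j$ is $\mathcal{C}^\infty$ on $\mathbb{R}^d\setminus\{0\}$ and positively homogeneous of degree $-d$, each partial derivative $\partial^{\beta}K_j$ is $\mathcal{C}^\infty$ on $\mathbb{R}^d\setminus\{0\}$ and positively homogeneous of degree $-d-|\beta|$. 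A short scaling argument (or a direct induction on $|\beta|$) then yields, for every multi-index $\beta$, a constant $C_\beta>0$ with
\[
|\partial^{\beta}K_j(x)|\leq \frac{C_\beta}{|x|^{d+|\beta|}},\qquad x\neq 0.
\]
In particular, for every integer $\delta>0$, condition (\ref{applicattheo15}) is satisfied with $B:=\max_{|\beta|\leq\delta}C_\beta$.

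Having checked both hypotheses of Corollary \ref{corollsing4}, I would conclude directly that $R_j$ extends to a bounded operator from $\mathcal{H}^{(q,p)}$ to $(L^q,\ell^p)$ for every $0<q\leq 1$ and every admissible $p$. There is no real obstacle in this argument; the only point that requires any care is the one already handled for the Hilbert transform, namely the verification that the Riesz kernel fits the principal-value framework of (\ref{applicattheo091}), after which all the size estimates reduce to the homogeneity of $K_j$.
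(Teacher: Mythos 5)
Your proposal is correct and follows essentially the same route as the paper: identify $R_j$ as convolution with the kernel $K_j(x)=c_d\,x_j/|x|^{d+1}$, check $|\widehat{K_j}(\xi)|=|\xi_j|/|\xi|\leq 1$ and the derivative bounds $|\partial^{\beta}K_j(x)|\lsim |x|^{-d-|\beta|}$, and invoke Corollary \ref{corollsing4}. The only cosmetic difference is that you derive the derivative estimates from homogeneity of degree $-d$ by a scaling argument, whereas the paper cites them from the literature; both are fine.
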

\begin{proof}
We recall that the Riesz transform $R_j(f)$, $1\leq j\leq d$, of $f\in\mathcal{S}$ is given by  
\begin{eqnarray*}
R_j(f)(x)=(K_j\ast f)(x)=\frac{\Gamma\left(\frac{d+1}{2}\right)}{\pi^{\frac{d+1}{2}}}\text{ p.v. }\int_{\mathbb{R}^d}\frac{y_j}{|y|^{d+1}}f(x-y)dy,\ \ x\in\mathbb{R}^d,
\end{eqnarray*}
where $K_j(x)=\frac{\Gamma\left(\frac{d+1}{2}\right)}{\pi^{\frac{d+1}{2}}}\frac{x_j}{|x|^{d+1}}$ and
\begin{eqnarray*}
\text{ p.v. }\int_{\mathbb{R}^d}\frac{y_j}{|y|^{d+1}}f(x-y)dy=\lim_{\epsilon\rightarrow0}\int_{|y|\geq\epsilon}\frac{y_j}{|y|^{d+1}}f(x-y)dy.
\end{eqnarray*}
See \cite{LGkos}, Definition 4.1.13, p. 259. Also 
\begin{eqnarray}
\widehat{K}_j(\xi)=-i\frac{\xi_j}{|\xi|}\ , \label{inegRiesztransf2}
\end{eqnarray}
for $\xi\neq0$. For (\ref{inegRiesztransf2}), see \cite{LGkos}, Proposition 4.1.14, p. 260 and \cite{JD}, Chap. 4, (4.8), p. 76. Hence 
\begin{eqnarray}
\left\|\widehat{K}_j\right\|_{\infty}\leq 1, \label{inegRiesztransf3}
\end{eqnarray}
and, for any integer $\delta>0$, 
\begin{eqnarray}
|\partial^{\beta}K_j(x)|\leq\frac{C(d,\delta)}{|x|^{d+|\beta|}}\ , \label{inegRiesztransf4}
\end{eqnarray}
for all $x\neq0$ and all multi-indexes $\beta$ with $|\beta|\leq\delta$. For (\ref{inegRiesztransf4}), see \cite{SZLU}, p. 120. Thus, the distribution $K_j$ satisfies the conditions of Corollary \ref{corollsing4} and the result follows.    
\end{proof}

In the next result, we prove that the convolution operator defined in Theorem \ref{theoremsing4} is bounded in Hardy-amalgam spaces.

\begin{thm} \label{theoremsing5}
Let $K$ be a tempered distribution in $\mathbb{R}^d$ that coincides with a locally integrable function on $\mathbb{R}^d\backslash\left\{0\right\}$ and is such that
\begin{eqnarray*}
|\widehat{K}(\xi)|\leq A,
\end{eqnarray*} 
and, there exist an integer $\delta>0$ and a constant $B>0$ such that
\begin{eqnarray}
|\partial^{\beta}K(x)|\leq\frac{B}{|x|^{d+|\beta|}}\ ,\label{bisapplicattheo15}
\end{eqnarray}
for all $x\neq0$ and all multi-indexes $\beta$ with $|\beta|\leq\delta$. If $\frac{d}{d+\delta}<q\leq 1$, then the operator $T(f)=K\ast f$, for all $f\in\mathcal{S}$, extends to a bounded operator from $\mathcal{H}^{(q,p)}$ to $\mathcal{H}^{(q,p)}$.  
\end{thm}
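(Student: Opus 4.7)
My plan is to mirror the strategy of Theorem \ref{theorretoursur}: establish a convolution analog of Lemma \ref{propretoursur} asserting that $T$ maps every sufficiently smooth atom to a constant multiple of a molecule, then invoke the molecular decomposition Theorem \ref{thafondammolec2} together with the finite-atomic-norm equivalence Theorem \ref{thafondamfini}.

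First, I would fix integers $\delta_{0}\geq\left\lfloor d(1/q-1)\right\rfloor$ (which is at most $\delta$ by the hypothesis $d/(d+\delta)<q$) and $N$ large enough that the resulting Taylor remainder furnishes the molecule decay while remaining within the range of derivatives of $K$ controlled by (\ref{bisapplicattheo15}). Remarks \ref{remarqsingaj} and \ref{remarqsingaj1} combined with Theorem \ref{theoremsingaj} show that $T$ extends to a bounded operator on $L^{r}$ for every $r>\max\{2,p\}$. For a $(q,\infty,N)$-atom $\mathbf{a}$ supported in a cube $Q$ with centre $x_{Q}$, I would verify that $c_{1}^{-1}T(\mathbf{a})$ is a $(q,r,\delta_{0})$-molecule centred at $Q$ for a suitable constant $c_{1}>0$ independent of $\mathbf{a}$. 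The three defining properties of Definition \ref{defhqpatomolec} would be checked in the spirit of Lemma \ref{propretoursur}:
(i) $\left\|T(\mathbf{a})\chi_{_{\widetilde{Q}}}\right\|_{r}\leq\left\|T(\mathbf{a})\right\|_{r}\lsim\left\|\mathbf{a}\right\|_{r}\leq|Q|^{1/r-1/q}$;
(ii) for $x\notin\widetilde{Q}$, apply Taylor's formula to $y\mapsto K(x-y)$ around $x_{Q}$, use the vanishing moments of $\mathbf{a}$ to subtract the Taylor polynomial, and bound the remainder by means of the hypothesis $|\partial^{\beta}K(\xi)|\lsim|\xi|^{-d-|\beta|}$ together with $|\xi|\approx|x-x_{Q}|$ for $y\in Q$, yielding the molecule decay $|Q|^{-1/q}(1+|x-x_{Q}|/\ell(Q))^{-2d-2\delta_{0}-3}$;
(iii) the vanishing moments $\int_{\mathbb{R}^{d}}x^{\beta}T(\mathbf{a})(x)\,dx=0$ for $|\beta|\leq\delta_{0}$ would follow from the Fourier identity $\widehat{T\mathbf{a}}=\widehat{K}\widehat{\mathbf{a}}$, the vanishing of $\widehat{\mathbf{a}}$ to order $N\geq\delta_{0}$ at the origin, and the boundedness of $\widehat{K}$, the justification resting on the integrability of $T(\mathbf{a})$ secured by (ii).

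Given $f\in\mathcal{H}_{fin}^{(q,p)}\cap\mathcal{C}(\mathbb{R}^{d})$ with a finite decomposition $f=\sum_{n=0}^{j}\lambda_{n}\mathbf{a}_{n}$ into $(q,\infty,N)$-atoms, I would rewrite $T(f)=\sum_{n=0}^{j}(c_{1}\lambda_{n})\bigl(c_{1}^{-1}T(\mathbf{a}_{n})\bigr)$, fix $0<\eta<q$, and apply Theorem \ref{thafondammolec2} to obtain
$$\left\|T(f)\right\|_{\mathcal{H}^{(q,p)}}\lsim\left\|\sum_{n=0}^{j}\left(\frac{|\lambda_{n}|}{\left\|\chi_{_{Q^{n}}}\right\|_{q}}\right)^{\eta}\chi_{_{Q^{n}}}\right\|_{\frac{q}{\eta},\frac{p}{\eta}}^{\frac{1}{\eta}}.$$
Theorem \ref{thafondamfini} would then control the right-hand side by a constant multiple of $\left\|f\right\|_{\mathcal{H}^{(q,p)}}$, after which the density of $\mathcal{H}_{fin}^{(q,p)}\cap\mathcal{C}(\mathbb{R}^{d})$ in $\mathcal{H}^{(q,p)}$ extends $T$ to a bounded operator on the whole space.

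The hard part will be the bookkeeping of smoothness parameters in step (ii): ensuring that the Taylor remainder, which uses derivatives of $K$ of order at most $\delta$, supplies precisely the decay $(1+|x-x_{Q}|/\ell(Q))^{-2d-2\delta_{0}-3}$ prescribed by Definition \ref{defhqpatomolec}. This is the point where the hypothesis $d/(d+\delta)<q$ is genuinely at work, through the interplay between $\delta_{0}$ (moment order of the molecule), $N$ (moment order of the atom), and $\delta$ (allowed derivative order of $K$). The moment verification in (iii) is also delicate because $K$ is only a tempered distribution, so the interchange of integrations must be carefully justified through the decay obtained in (ii).
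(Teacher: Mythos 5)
Your molecular strategy does not close under the hypotheses of Theorem \ref{theoremsing5}, and the difficulty you flag at the end of your proposal is in fact fatal rather than a matter of bookkeeping. To make $c_{1}^{-1}T(\mathbf{a})$ a $(q,r,\delta_{0})$-molecule you must produce the decay $|Q|^{-1/q}\left(1+|x-x_{Q}|/\ell(Q)\right)^{-2d-2\delta_{0}-3}$ required by Definition \ref{defhqpatomolec}. A Taylor expansion of $y\mapsto K(x-y)$ about $x_{Q}$ exploiting $N$ vanishing moments of the atom has a remainder involving derivatives of $K$ of order $N+1$ and yields decay $\left(1+|x-x_{Q}|/\ell(Q)\right)^{-(d+N+1)}$; matching the molecular exponent forces $N\geq d+2\delta_{0}+2$ and hence requires the bound (\ref{bisapplicattheo15}) for all $|\beta|\leq d+2\delta_{0}+3$. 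But the theorem only supplies derivatives of $K$ up to order $\delta$, and the hypothesis $d/(d+\delta)<q$ guarantees no more than $\delta\geq\delta_{0}+1$ (for instance $d=1$, $q=1$, $\delta=1$ gives decay of order $2$ where order $5$ is needed). This is precisely why Lemma \ref{propretoursur} and Theorem \ref{theorretoursur} explicitly assume derivative control up to order $d+2\delta+3$ and work with atoms having $d+2\delta+2$ vanishing moments; those assumptions are absent here and cannot be derived from (\ref{bisapplicattheo15}).

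The paper's proof takes a different and cheaper route: it never builds molecules, but estimates the radial maximal function $\mathcal{M}_{0}(T(f))=\sup_{t>0}|T(f)\ast\varphi_{t}|$ directly. Writing $T(\mathbf{a}_{n})\ast\varphi_{t}=\mathbf{a}_{n}\ast K^{(t)}$ with $K^{(t)}=K\ast\varphi_{t}$, it invokes the uniform-in-$t$ bounds $\sup_{t>0}|\partial^{\beta}K^{(t)}(z)|\lsim|z|^{-d-|\beta|}$ for $|\beta|\leq\delta$ (Grafakos, Stein), Taylor-expands $K^{(t)}$ to order $\delta-1$ against the atom's moments, and obtains $\mathcal{M}_{0}(T(\mathbf{a}_{n}))(x)\lsim\left[\mathfrak{M}(\chi_{_{Q^{n}}})(x)\right]^{(d+\delta)/d}/\left\|\chi_{_{Q^{n}}}\right\|_{q}$ off $\widetilde{Q^{n}}$; the exponent $(d+\delta)/d>1/q$, equivalent to the hypothesis $q>d/(d+\delta)$, is exactly what lets the vector-valued maximal inequality (Proposition \ref{operamaxima}, via the proof of Theorem 4.6 in \cite{AbFt}) absorb this term in the amalgam norm, while the local term is handled by the $L^{r}$-boundedness of $\mathfrak{M}\circ T$. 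You would need either to adopt this maximal-function argument or to strengthen the hypotheses to the level of Lemma \ref{propretoursur} for your molecular argument to go through.
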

\begin{proof}
Under the assumption that $\frac{d}{d+\delta}<q\leq 1$, we have $\delta\geq\left\lfloor d\left(\frac{1}{q}-1\right)\right\rfloor+1$. Let $r>\max\left\{2;p\right\}$ be a real. We consider the space $\mathcal{H}_{fin}^{(q,p)}$ of finite linear combination of $(q,r,\delta)$-atoms. For $f\in\mathcal{H}_{fin}^{(q,p)}$, there exist a finite sequence $\left\{(\textbf{a}_n, Q^n)\right\}_{n=0}^j$ of elements of $\mathcal{A}(q,r,\delta)$ and a finite sequence of scalars $\left\{{\lambda}_n\right\}_{n=0}^j$ such that $f=\sum_{n=0}^j\lambda_n\textbf{a}_n$. Set $\widetilde{Q^n}:=2\sqrt{d}Q^n$, $n\in\left\{0,1,\ldots,j\right\}$, and denote by $x_n$ and $\ell_n$ respectively the center and side-length of $Q^n$. We have, for all $x\in\mathbb{R}^d$,
\begin{eqnarray*}
\mathcal{M}_{0}(T(f))(x)\leq\sum_{n=0}^j|\lambda_n|\left[\mathcal{M}_{0}(T(\textbf{a}_n))(x)\chi_{_{\widetilde{Q^n}}}(x)+\mathcal{M}_{0}(T(\textbf{a}_n))(x)\chi_{_{\mathbb{R}^d\backslash{\widetilde{Q^n}}}}(x)\right].
\end{eqnarray*}
Therefore, 
\begin{eqnarray*}
\left\|\mathcal{M}_{0}(T(f))\right\|_{q,p}\lsim I+J, 
\end{eqnarray*}
with $$I=\left\|\sum_{n=0}^j|\lambda_n|\mathcal{M}_{0}(T(\textbf{a}_n))\chi_{_{\widetilde{Q^n}}}\right\|_{q,p}\ \text{ and }\ J=\left\|\sum_{n=0}^j|\lambda_n|\mathcal{M}_{0}(T(\textbf{a}_n))\chi_{_{\mathbb{R}^d\backslash{\widetilde{Q^n}}}}\right\|_{q,p}.$$ Let us fix $0<\eta<q$. Since $\text{supp}\left(\mathcal{M}_{0}(T(\textbf{a}_n))\chi_{_{\widetilde{Q^n}}}\right)^{\eta}\subset\widetilde{Q^n}$ , $1<\frac{q}{\eta}\leq\frac{p}{\eta}<\frac{r}{\eta}$ and
\begin{eqnarray*}
\left\|\left(\mathcal{M}_{0}(T(\textbf{a}_n))\chi_{_{\widetilde{Q^n}}}\right)^{\eta}\right\|_{\frac{r}{\eta}}
&\leq&\left\|\mathcal{M}_{0}(T(\textbf{a}_n))\right\|_{r}^{\eta}\\
&\lsim&\left\|\mathfrak{M}(T(\textbf{a}_n))\right\|_{r}^{\eta}\lsim|\widetilde{Q^n}|^{\frac{1}{\frac{r}{\eta}}-\frac{1}{\frac{q}{\eta}}}, 
\end{eqnarray*}
we obtain, as in the proof of Theorem \ref{theoremsing3},  
\begin{eqnarray*}
I\leq\left\|\sum_{n=0}^j|\lambda_n|^{\eta}\left(\mathcal{M}_{0}(T(\textbf{a}_n))\chi_{_{\widetilde{Q^n}}}\right)^{\eta}\right\|_{\frac{q}{\eta},\frac{p}{\eta}}^{\frac{1}{\eta}}\lsim\left\|\sum_{n=0}^j\left(\frac{|\lambda_n|}{\left\|\chi_{_{Q^n}}\right\|_{q}}\right)^{\eta}\chi_{_{Q^n}}\right\|_{\frac{q}{\eta},\frac{p}{\eta}}^{\frac{1}{\eta}}.
\end{eqnarray*}
To estimate $J$, it suffices to prove that 
\begin{eqnarray}
\mathcal{M}_{0}(T(\textbf{a}_n))(x)\lsim\frac{\left[\mathfrak{M}(\chi_{_{Q^n}})(x)\right]^{\frac{d+\delta}{d}}}{\left\|\chi_{_{Q^n}}\right\|_q}\ , \label{applicattheo17}
\end{eqnarray}
for all $x\notin\widetilde{Q^n}$. 
 To show (\ref{applicattheo17}), we consider $x\notin\widetilde{Q^n}$. For every $t>0$, set $K^{(t)}=K\ast\varphi_t$. We have
\begin{eqnarray}
\sup_{t>0}|\widehat{K^{(t)}}(z)|\leq\left\|\widehat{\varphi}\right\|_{\infty}A\ , \label{applicattheo18}
\end{eqnarray}
for all $z\in\mathbb{R}^d$, and  
\begin{eqnarray}
\sup_{t>0}|\partial^{\beta}_z K^{(t)}(z)|\leq \frac{C_{\varphi,A,B,\delta,d}}{|z|^{d+|\beta|}}\ , \label{applicattheo19}
\end{eqnarray}
for all $z\neq0$ and all multi-indices $\beta$ with $|\beta|\leq\delta$. For (\ref{applicattheo18}) and (\ref{applicattheo19}), see \cite{LG}, (6.7.22) and (6.7.23), pp. 100-101 (also see \cite{MA}, Lemma, p. 117). Thus, using Taylor's formula at order $\delta-1$ and the vanishing condition of the atom $\textbf{a}_n$, we obtain 
\begin{eqnarray*}
|(T(\textbf{a}_n)\ast\varphi_t)(x)|&=&|\textbf{a}_n\ast K^{(t)}(x)|=\left|\int_{Q^n}K^{(t)}(x-y)\textbf{a}_n(y)dy\right|\\
&\leq&\int_{Q^n}|R^{(t)}(x-y,x-x_n)||\textbf{a}_n(y)|dy,
\end{eqnarray*}
with
\begin{eqnarray*}
|R^{(t)}(x-y,x-x_n)|&=&\left|\delta\sum_{|\beta|=\delta}\frac{(x_n-y)^{\beta}}{\beta!}\int_{0}^1(1-\theta)^{\delta-1}\partial^{\beta}_x K^{(t)}((x-x_n)+\theta(x_n-y))d\theta\right|\\
&\lsim&\frac{|x_n-y|^{\delta}}{|x-x_n|^{d+\delta}}\ ,
\end{eqnarray*}
for all $y\in Q^n$, by (\ref{applicattheo19}). Hence 
\begin{eqnarray*}
|(T(\textbf{a}_n)\ast\varphi_t)(x)|\lsim\int_{Q^n}\frac{|x_n-y|^{\delta}}{|x-x_n|^{d+\delta}}|\textbf{a}_n(y)|dy\lsim\frac{\left[\mathfrak{M}(\chi_{_{Q^n}})(x)\right]^{\frac{d+\delta}{d}}}{\left\|\chi_{_{Q^n}}\right\|_q},
\end{eqnarray*}
and consequently
\begin{eqnarray*}
J\lsim\left\|\sum_{n=0}^j|\lambda_n|\frac{\left[\mathfrak{M}(\chi_{_{Q^n}})\right]^{\frac{d+\delta}{d}}}{\left\|\chi_{_{Q^n}}\right\|_q}\right\|_{q,p}\lsim \left\|\sum_{n=0}^j\left(\frac{|\lambda_n|}{\left\|\chi_{_{Q^n}}\right\|_{q}}\right)^{\eta}\chi_{_{Q^{n}}}\right\|_{\frac{q}{\eta},\frac{p}{\eta}}^{\frac{1}{\eta}}.
\end{eqnarray*}
Finally, 
\begin{eqnarray*}
\left\|\mathcal{M}_{0}(T(f))\right\|_{q,p}\lsim\left\|\sum_{n=0}^j\left(\frac{|\lambda_n|}{\left\|\chi_{_{Q^n}}\right\|_{q}}\right)^{\eta}\chi_{_{Q^n}}\right\|_{\frac{q}{\eta},\frac{p}{\eta}}^{\frac{1}{\eta}}
\end{eqnarray*}
and we end the proof as the one of Theorem \ref{theoremsing1}.
\end{proof}

We give some consequences of Theorem \ref{theoremsing5}.

\begin{cor}\label{corollsing5}
Let $K$ be a tempered distribution in $\mathbb{R}^d$ which coincides with a locally integrable function on $\mathbb{R}^d\backslash\left\{0\right\}$ and is such that 
\begin{eqnarray*}
|\widehat{K}(\xi)|\leq A,
\end{eqnarray*} 
and, for any integer $\delta>0$, satisfies (\ref{bisapplicattheo15}). Then, the operator $T(f)=K\ast f$, for all $f\in\mathcal{S}$, extends to a bounded operator from $\mathcal{H}^{(q,p)}$ to $\mathcal{H}^{(q,p)}$, for all $0<q\leq 1$.
\end{cor}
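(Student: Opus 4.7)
The plan is to deduce this corollary as an immediate consequence of Theorem \ref{theoremsing5} by choosing $\delta$ large enough in terms of $q$, mirroring how Corollary \ref{retoursur2} is obtained from Theorem \ref{theoremsing0}.

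More precisely, given any $0<q\leq 1$, I would first pick an integer $\delta>0$ satisfying $\delta>d\left(\frac{1}{q}-1\right)$, which is equivalent to $\frac{d}{d+\delta}<q$. Such a choice is always possible since the right-hand side can be made arbitrarily small by taking $\delta$ large. By the standing hypothesis of the corollary, the distribution $K$ satisfies the bound (\ref{bisapplicattheo15}) for this specific integer $\delta$, with some constant $B=B(\delta)$, and the Fourier-side bound $|\widehat{K}(\xi)|\leq A$ holds unconditionally.

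With these two conditions in place for the chosen $\delta$, I would then invoke Theorem \ref{theoremsing5} directly: it guarantees that the convolution operator $T(f)=K\ast f$ defined on $\mathcal{S}$ extends to a bounded operator from $\mathcal{H}^{(q,p)}$ to $\mathcal{H}^{(q,p)}$. Since $q\in(0,1]$ was arbitrary, this yields the result.

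There is no real obstacle here, as the corollary is a bookkeeping consequence of Theorem \ref{theoremsing5}; the only minor point worth noting is that the constant in the final boundedness estimate depends on $q$ through the choice of $\delta$ (and thus through $B(\delta)$), but this is harmless since boundedness is claimed for each fixed $q$ separately rather than uniformly in $q$.
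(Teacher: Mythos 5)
Your proposal is correct and is exactly the intended argument: the paper leaves this corollary without an explicit proof, but the analogous Corollary \ref{retoursur2} is proved in precisely this way (choose an integer $\delta>d\left(\tfrac{1}{q}-1\right)$, equivalently $\tfrac{d}{d+\delta}<q$, and apply the corresponding theorem). Your observation that the resulting bound depends on $q$ through $B(\delta)$ is accurate and harmless, as you note.
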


\begin{cor} \label{appliRiesztransf2}
The Riesz transforms  $R_j$, $j\in\left\{1,2,\ldots,d\right\}$, extend to bounded operators on $\mathcal{H}^{(q,p)}$, for all $0<q\leq 1$. In particular, when $d=1$, the Hilbert transform $H$ extends to a bounded operator on $\mathcal{H}^{(q,p)}$, for all $0<q\leq 1$. 
\end{cor}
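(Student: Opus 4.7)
The proof plan is to apply Corollary \ref{corollsing5} directly, reusing the kernel estimates already verified in Corollary \ref{appliRiesztransf1}. The only thing that needs checking is that the Riesz kernels $K_j$ (and, in dimension one, the Hilbert kernel $K(x) = \frac{1}{\pi x}$) satisfy the two hypotheses of Corollary \ref{corollsing5}, namely a uniform bound on $\widehat{K}$ and the pointwise derivative estimates $|\partial^{\beta}K(x)| \lsim |x|^{-d-|\beta|}$ for every multi-index $\beta$.

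First I would recall the definition $K_j(x) = \frac{\Gamma((d+1)/2)}{\pi^{(d+1)/2}}\,\frac{x_j}{|x|^{d+1}}$, regarded as the principal value tempered distribution used to define $R_j(f) = K_j \ast f$ on $\mathcal{S}$. The Fourier transform is $\widehat{K_j}(\xi) = -i\xi_j/|\xi|$ for $\xi \neq 0$ (already cited as (\ref{inegRiesztransf2})), which gives $\|\widehat{K_j}\|_{\infty} \leq 1$. This handles the first hypothesis of Corollary \ref{corollsing5}.

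Second, for the derivative estimates, I would invoke the bound (\ref{inegRiesztransf4}) already established in the proof of Corollary \ref{appliRiesztransf1}: for every integer $\delta > 0$ and every multi-index $\beta$ with $|\beta| \leq \delta$,
\begin{equation*}
|\partial^{\beta}K_j(x)| \leq \frac{C(d,\delta)}{|x|^{d+|\beta|}}, \qquad x \neq 0.
\end{equation*}
Thus $K_j$ satisfies condition (\ref{bisapplicattheo15}) for every integer $\delta > 0$. Applying Corollary \ref{corollsing5} yields that $R_j$ extends to a bounded operator on $\mathcal{H}^{(q,p)}$ for all $0 < q \leq 1$ and all $q \leq p < \infty$. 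The particular case $d = 1$ gives the statement for the Hilbert transform, since its kernel $K(x) = \frac{1}{\pi x}$ already satisfies the same hypotheses (the Fourier transform bound is in (\ref{inegHilberttransf5}) and the derivative bound in (\ref{inegHilberttransf6})).

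There is essentially no obstacle here: the corollary is a bookkeeping consequence of Corollary \ref{corollsing5} combined with the classical kernel estimates already collected in Corollary \ref{appliRiesztransf1}. If anything merits a brief remark, it is only that the estimates in (\ref{inegRiesztransf4}) and (\ref{inegHilberttransf6}) hold for \emph{every} $\delta$ with a constant depending on $\delta$, which is exactly what Corollary \ref{corollsing5} requires, and that Corollary \ref{corollsing5} is applicable for each fixed $0 < q \leq 1$ by choosing $\delta$ large enough that $q > d/(d+\delta)$ in Theorem \ref{theoremsing5}.
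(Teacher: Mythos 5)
Your proposal is correct and follows exactly the paper's route: the paper proves this corollary by a one-line appeal to Corollary \ref{corollsing5}, relying on the same kernel estimates (\ref{inegRiesztransf2})--(\ref{inegRiesztransf4}) already checked in the proof of Corollary \ref{appliRiesztransf1}. Your write-up simply makes that verification explicit.
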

\begin{proof}
Corollary \ref{appliRiesztransf2} immediately follows from corollary \ref{corollsing5}.
\end{proof}

Another consequence of Corollary \ref{corollsing5} is the following.

\begin{cor}[\cite{AbFt}, Proposition 5.5] \label{propapplica1}
Let $k\in\mathcal{S}$ and $T$ be the operator defined by  
\begin{eqnarray}
T(f)=k\ast f,
\label{applicainequal2re}
\end{eqnarray}
for every $f\in L^2$. Then, $T$ extends to a bounded operator from $\mathcal{H}^{(q,p)}$ to $\mathcal{H}^{(q,p)}$, for all $0<q\leq 1$. 
\end{cor}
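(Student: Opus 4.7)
The plan is to reduce the statement to a direct application of Corollary~\ref{corollsing5} by verifying that the kernel $k \in \mathcal{S}$, viewed as a tempered distribution via $\langle k, \psi \rangle = \int_{\mathbb{R}^d} k(x)\psi(x)\,dx$, satisfies all the hypotheses required there. Since $k$ is smooth on all of $\mathbb{R}^d$, it is trivially locally integrable on $\mathbb{R}^d \setminus \{0\}$ and coincides there with itself, so the basic structural assumption is immediate. The Fourier side is handled just as easily: $\widehat{k} \in \mathcal{S} \subset L^\infty$, so we may take $A := \|\widehat{k}\|_\infty < +\infty$.

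The only substantive step is to verify that for every integer $\delta > 0$ there exists a constant $B = B(\delta) > 0$ with
\begin{equation*}
|\partial^{\beta} k(x)| \leq \frac{B}{|x|^{d+|\beta|}}, \qquad x \neq 0, \quad |\beta| \leq \delta.
\end{equation*}
I would split the argument by the size of $|x|$. For $|x| \leq 1$, one has $|x|^{d+|\beta|} \leq 1$, so the bound $|\partial^{\beta} k(x)| \leq \|\partial^{\beta} k\|_\infty$ already yields $|\partial^{\beta} k(x)| \leq \|\partial^{\beta} k\|_\infty / |x|^{d+|\beta|}$. For $|x| \geq 1$, pick $m \geq d + \delta$; then the Schwartz seminorm estimate $\mathcal{N}_m(k) \geq (1+|x|)^{m}|\partial^{\beta} k(x)| \geq |x|^{d+|\beta|}|\partial^{\beta} k(x)|$ gives the bound with constant $\mathcal{N}_m(k)$. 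Taking $B(\delta)$ to be the maximum of these finitely many constants over $|\beta| \leq \delta$ completes the check.

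With both hypotheses of Corollary~\ref{corollsing5} in hand (for every $\delta > 0$), that corollary yields directly that $T(f) = k \ast f$, defined for $f \in \mathcal{S}$, extends to a bounded operator from $\mathcal{H}^{(q,p)}$ to $\mathcal{H}^{(q,p)}$ for every $0 < q \leq 1$. The discrepancy between the statement's initial domain $L^2$ and the $\mathcal{S}$ framework of the previous results is harmless: the extension to $\mathcal{H}^{(q,p)}$ is obtained through the dense subspace $\mathcal{H}_{fin}^{(q,p)} \cap \mathcal{C}(\mathbb{R}^d)$ of finite atomic combinations, on which both prescriptions agree. I do not expect any real obstacle — the argument is essentially bookkeeping, with the only mild point being the two-case verification of the derivative estimate near and away from the origin.
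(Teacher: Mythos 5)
Your proposal is correct and follows essentially the same route as the paper: both reduce the statement to Corollary \ref{corollsing5} by checking that $\widehat{k}\in L^{\infty}$ and that $|\partial^{\beta}k(x)|\lesssim |x|^{-d-|\beta|}$ for all orders, the latter being exactly the step the paper dismisses as ``easy to verify'' and which you spell out correctly via the two-case split using $\|\partial^{\beta}k\|_{\infty}$ near the origin and the Schwartz seminorm $\mathcal{N}_{m}(k)$ with $m\geq d+\delta$ away from it.
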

\begin{proof}
Since $k\in\mathcal{S}$, we have clearly $k\in\mathcal{S'}$ and $\widehat{k}\in\mathcal{S}$. Therefore, there exists a constant $A>0$ such that  
\begin{eqnarray}
|\widehat{k}(\xi)|\leq A, \label{inécorollsing50}
\end{eqnarray}
for all $\xi\in\mathbb{R}^d$. Also, for any integer $\delta>0$, it's easy to verify that there exists a constant $B>0$ such that 
\begin{eqnarray}
|\partial^{\beta}k(x)|\leq\frac{B}{|x|^{d+|\beta|}}\ , \label{inécorollsing51}
\end{eqnarray}
for all $x\neq0$ and all multi-indexes $\beta$ with $|\beta|\leq\delta$. With (\ref{inécorollsing50}) and (\ref{inécorollsing51}), the result immediately follows from Corollary \ref{corollsing5}.
\end{proof}

\subsection{Riesz Potential operators}

In this subsection, we study the boundedness of Riesz potential operators $I_\alpha$, $0<\alpha<d$, in $\mathcal{H}^{(q,p)}$ spaces. It's well known that the operator $I_\alpha$ is bounded from $L^r$ to $L^s$, for $1<r<s<+\infty$ such that $\frac{1}{s}=\frac{1}{r}-\frac{\alpha}{d}$ (see \cite{JD}, Theorem 4.18, p. 89, \cite{LG}, Theorem 6.1.3, p. 3 and \cite{EMS}, Chap. V, Theorem 1, pp. 119-120). Also, $I_\alpha$ is bounded from $\mathcal{H}^q$ to $\mathcal{H}^p$, when $\frac{1}{p}=\frac{1}{q}-\frac{\alpha}{d}$ (see \cite{SZLU}, Theorem 3.1, p. 105 and \cite{MA}, Chap. III, p. 136). We recall that  
\begin{eqnarray}
I_\alpha(\phi)(x)=\frac{1}{\gamma_\alpha}\int_{\mathbb{R}^d}\frac{\phi(y)}{|x-y|^{d-\alpha}}dy, \label{potenRiesz0}
\end{eqnarray}
for all $\phi\in\mathcal{S}$, with $\gamma_\alpha=\pi^{\frac{d}{2}-\alpha}\frac{\Gamma\left(\frac{\alpha}{2}\right)}{\Gamma\left(\frac{d-\alpha}{2}\right)}\cdot$ For (\ref{potenRiesz0}), see \cite{JD}, Chap. 4, p. 88. Denoting by $K_\alpha$ the Riesz kernel, (\ref{potenRiesz0}) writes  
\begin{eqnarray}
I_\alpha(\phi)(x)=K_\alpha\ast\phi(x), \label{potenRiesz1}
\end{eqnarray}
with $$K_\alpha(x)=\frac{1}{\gamma_\alpha}|x|^{-(d-\alpha)},\ \  x\neq 0.$$ (\ref{potenRiesz1}) is also equal in the sense of distributions to 
\begin{eqnarray}
\widehat{I_\alpha(\phi)}(\xi)=|\xi|^{-\alpha}\widehat{\phi}(\xi), \label{potenRiesz2}
\end{eqnarray}
where 
\begin{eqnarray}
|\xi|^{-\alpha}=\widehat{K_\alpha}(\xi)\ , \label{potenRiesz3}
\end{eqnarray}
for all $\xi\neq 0$. For (\ref{potenRiesz2}) and (\ref{potenRiesz3}), see respectively \cite{JD}, Chap. 4, p. 88 and (4.3), p. 71.  

For our third  main result, we will need the following lemma. 

\begin{lem} \label{propofondaRiez}
Let $0<\sigma<d$, $0<w\leq h<+\infty$ and $0<u\leq v<+\infty$ such that 
\begin{eqnarray}
\frac{1}{u}=\frac{1}{w}-\frac{\sigma}{d} \label{potenRiesz}
\end{eqnarray}
and 
\begin{eqnarray}
\frac{1}{v}=\frac{1}{h}-\frac{\sigma}{d}\cdot \label{0potenRiesz}
\end{eqnarray}
Then, for all sequences of cubes $\left\{Q^n\right\}_{n\geq 0}$ and all sequences of nonnegative scalars $\left\{\delta_n\right\}_{n\geq 0}$, we have
\begin{eqnarray}
\left\|\sum_{n\geq0}\delta_n\ell_n^\sigma\chi_{_{Q^n}}\right\|_{u,v}\leq C\left\|\sum_{n\geq0}\delta_n\chi_{_{Q^n}}\right\|_{w,h}, \label{poten0Riesz}
\end{eqnarray}
where $\ell_n:=\ell(Q^n)$ and $C:=C(d,\sigma,w,h,u,v)>0$ is a constant independent of the sequences $\left\{Q^n\right\}_{n\geq 0}$ and $\left\{\delta_n\right\}_{n\geq 0}$.
\end{lem}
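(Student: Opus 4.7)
The plan is to dominate the left-hand side pointwise by a Riesz potential of the right-hand side, and then to invoke a Hardy--Littlewood--Sobolev estimate on amalgam spaces. For any cube $Q$ of side-length $\ell$ and any $x\in Q$, every $y\in Q$ satisfies $|x-y|\leq\sqrt{d}\,\ell$, so
$$I_\sigma(\chi_{_Q})(x)=\frac{1}{\gamma_\sigma}\int_Q\frac{dy}{|x-y|^{d-\sigma}}\geq\frac{|Q|}{\gamma_\sigma(\sqrt{d}\,\ell)^{d-\sigma}}=c_{d,\sigma}\,\ell^\sigma.$$
For $x\notin Q$, the inequality $\ell^\sigma\chi_{_Q}(x)=0\leq I_\sigma(\chi_{_Q})(x)$ is trivial, so $\ell^\sigma\chi_{_Q}\lsim I_\sigma(\chi_{_Q})$ pointwise on $\mathbb{R}^d$. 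Multiplying by the nonnegative scalars $\delta_n$ and summing (using positivity of $I_\sigma$ on nonnegative functions) yields
$$F(x):=\sum_{n\geq 0}\delta_n\ell_n^\sigma\chi_{_{Q^n}}(x)\lsim I_\sigma(G)(x),\qquad G:=\sum_{n\geq 0}\delta_n\chi_{_{Q^n}}.$$

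It therefore suffices to establish the amalgam-space Hardy--Littlewood--Sobolev estimate $\|I_\sigma G\|_{u,v}\lsim\|G\|_{w,h}$ under the scaling (\ref{potenRiesz})--(\ref{0potenRiesz}). The natural mechanism is Hedberg's trick: splitting the Riesz integral at a radius $r>0$, the near part is dominated by $r^\sigma\mathfrak{M}G(x)$ and the far part, via H\"older's inequality (using $w<d/\sigma$), by $r^{\sigma-d/w}\|G\|_w$; optimizing in $r$ gives
$$I_\sigma G(x)\lsim[\mathfrak{M}G(x)]^{1-\sigma w/d}\|G\|_w^{\sigma w/d}.$$
Since $u(1-\sigma w/d)=w$, raising to the $u$-th power, integrating over a unit cube $Q_k$, and applying the boundedness of $\mathfrak{M}$ on the local component via Proposition \ref{operamaxima}, controls $\|(I_\sigma G)\chi_{_{Q_k}}\|_u$ by an expression in which $\|G\|_w$ needs to be upgraded to an amalgam analogue; the $\ell^v$ summation over $k\in\mathbb{Z}^d$ is then closed by the identity $1/v=1/h-\sigma/d$.

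The principal technical obstacle lies in reconciling the global quantity $\|G\|_w$ arising in Hedberg's estimate with the local-global structure of the amalgam norm $\|G\|_{w,h}$. The remedy is localization: for $x\in Q_k$, decompose $G=G\chi_{_{3Q_k}}+G\chi_{_{\mathbb{R}^d\setminus 3Q_k}}$, apply Hedberg's bound to the local piece (whose $L^w$-norm is the $k$-th entry of the defining sequence of $\|G\|_{w,h}$), and estimate the tail by a direct geometric argument that reduces to a discrete Hardy--Littlewood--Sobolev inequality on $\mathbb{Z}^d$ for the convolution with the kernel $|j|^{\sigma-d}$, which has precisely the scaling to recover $\ell^v$-summability through the exponent relation $1/v=1/h-\sigma/d$.
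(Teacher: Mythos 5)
Your opening reduction is where the argument breaks down. Replacing $\ell_n^\sigma\chi_{_{Q^n}}$ by $I_\sigma(\chi_{_{Q^n}})$ is pointwise correct but strictly lossy, and the inequality you then set out to prove, $\left\|I_\sigma G\right\|_{u,v}\lsim\left\|G\right\|_{w,h}$ under the scaling (\ref{potenRiesz})--(\ref{0potenRiesz}), is \emph{false} on part of the range the lemma must cover. The hypotheses allow $w\leq 1$ and $h\leq 1$, and the lemma is in fact invoked in Case 1 of the proof of Theorem \ref{propofondaRiez1} with $w=q\leq 1$, $h=p$; in that regime your target is an endpoint Hardy--Littlewood--Sobolev estimate. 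Concretely, take $w=h=1$, so $u=v=d/(d-\sigma)$, a single cube $Q^0=Q(0,\ell)$ with $\ell\leq 1$, and $\delta_0=\ell^{-d}$. Then $\left\|\delta_0\chi_{_{Q^0}}\right\|_{1,1}\approx 1$ and the true left-hand side $\left\|\delta_0\ell^\sigma\chi_{_{Q^0}}\right\|_{u,v}\approx\ell^{-d+\sigma+d(d-\sigma)/d}=1$, so the lemma holds with a uniform constant; but $I_\sigma(\delta_0\chi_{_{Q^0}})(x)\gsim|x|^{\sigma-d}$ for $|x|\geq 2\ell$, and since $(\sigma-d)u=-d$ the local $L^u$-norm over the unit cube containing the origin is of order $(\log(1/\ell))^{(d-\sigma)/d}$, which blows up as $\ell\to 0$. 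Hence no constant $C$ makes your intermediate inequality true, even for the special functions $G=\sum_n\delta_n\chi_{_{Q^n}}$. The same endpoint failure infects the ingredients of your sketch: Hedberg's trick requires $w\geq 1$ for the H\"older step on the far part (the condition $w<d/\sigma$ is not enough), and the discrete convolution with $|j|^{\sigma-d}$ does not map $\ell^1(\mathbb{Z}^d)$ into $\ell^{d/(d-\sigma)}(\mathbb{Z}^d)$. Even in the subcritical range $1<w\leq h$ your two closing paragraphs are a program rather than a proof, but the decisive objection is that the reduction itself cannot work where the lemma is needed.

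For contrast, the paper's proof never introduces the Riesz potential and instead exploits the special form of $F=\sum_n\delta_n\chi_{_{Q^n}}$: after reducing to pairwise distinct dyadic cubes and normalizing $\left\|F\right\|_{w,h}=1$, one extracts the coefficient bounds $\delta_m\leq\ell_m^{-d/w}$ (cubes of measure at most $1$) and $\delta_m\leq\ell_m^{-d/h}$ (cubes of measure at least $2$), so that $\delta_m\ell_m^\sigma\chi_{_{Q^m}}(x)\leq\min\{\ell_m^\sigma F(x),\ell_m^{\sigma-d/w}\}$; summing the resulting geometric series over dyadic side lengths yields the pointwise bounds $G(x)\lsim F(x)^{w/u}$ and $G(x)\lsim F(x)^{h/v}$, which are converted into the norm inequality via $\|F^{w/u}\|_{u,v}=\|F\|_{w,(w/u)v}^{w/u}$ and the monotonicity of the global component. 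General cubes are then handled by comparison with dyadic ones through the Fefferman--Stein maximal inequality (Proposition \ref{operamaxima}). This is the mechanism that survives $w\leq 1$, and it is what your approach would need to replace.
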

\begin{proof} 
The idea of our proof is the one of \cite{SAY}, Lemma 5.2, p. 138. Let  $\left\{Q^n\right\}_{n\geq 0}$ be a sequence of cubes and $\left\{\delta_n\right\}_{n\geq 0}$ a sequence of nonnegative scalars.  

We first suppose  that each $Q^n$ is a dyadic cube, namely, $$Q^n=2^{-\upsilon_n}\prod_{i=1}^d[m_{n,i}\ ,\ m_{n,i}+1):=\prod_{i=1}^d[2^{-\upsilon_n}m_{n,i}\ ,\ 2^{-\upsilon_n}(m_{n,i}+1))\ ,$$ where $\upsilon_n\in\mathbb{Z}$ is the generation of $Q^n$ and $(m_{n,1},m_{n,2},\ldots,m_{n,d})\in\mathbb{Z}^d$. 
Since dyadic cubes form a grid, we can assume that $Q^n\neq Q^m$, for $0\leq n<m<+\infty$.

 Let 
$$F(x):=\sum_{n\geq0}\delta_n\chi_{_{Q^n}}(x)\ \text{ and }\ G(x):=\sum_{n\geq0}\delta_n\ell_n^\sigma\chi_{_{Q^n}}(x),\ \text{ for all }\  x\in\mathbb{R}^d.$$ By homogeneity of quasi-norms, we can assume that $\left\|F\right\|_{w,h}=1$. Thus, we have to show that $\left\|G\right\|_{u,v}\leq C$, where $C>0$ is a constant independent of the sequences $\left\{Q^n\right\}_{n\geq 0}$ and $\left\{\delta_n\right\}_{n\geq 0}$. For this purpose, we will consider separately cubes with measure less or equal to 1 and those with measure greater or equal to 2.

$1^{rst}$ case : we consider cubes $Q^m$ such that $|Q^m|\leq 1$. For each of them, we have
\begin{eqnarray*}
\sum_{k\in\mathbb{Z}^d}|Q^m\cap Q_k|^{\frac{h}{w}}=\left\|\chi_{_{Q^m}}\right\|_{w,h}^h\leq\frac{\left\|F\right\|_{w,h}^h}{\delta_m^h}=\frac{1}{\delta_m^h}\cdot
\end{eqnarray*}
Moreover, for all $k=(k_1,k_2,\ldots,k_d)\in\mathbb{Z}^d$, $Q_k$ is a dyadic cube of generation $0$ (we recall that $Q_k=\prod_{i=1}^d[k_i,k_i+1)$ ) and $\left\{Q_k\right\}_{k\in\mathbb{Z}^d}$ forms a partition of $\mathbb{R}^d$. Thus, since $Q^m$ is a dyadic cube of generation $\upsilon_m\geq0$, we have 
\begin{eqnarray*}
\frac{1}{\delta_m^h}\geq\left\|\chi_{_{Q^m}}\right\|_{w,h}^h&=&\sum_{\underset{Q^m\cap Q_k\neq\emptyset}{k\in\mathbb{Z}^d}}|Q^m\cap Q_k|^{\frac{h}{w}}\\
&=&|Q^m\cap Q_{k_0}|^{\frac{h}{w}} =|Q_m|^{\frac{h}{w}}=\ell_m^{d\frac{h}{w}},
\end{eqnarray*}
for some $k_0\in\mathbb{Z}^d$. Fix $x\in\mathbb R^d$. It comes that 
\begin{eqnarray}
\delta_m\ell_m^{\sigma}\chi_{_{Q^m}}(x)\leq\ell_m^{\sigma-\frac{d}{w}}. \label{0potenRiesz2}
\end{eqnarray}
Also, 
\begin{eqnarray}
\delta_m\ell_m^{\sigma}\chi_{_{Q^m}}(x)\leq\ell_m^{\sigma}\sum_{n\geq0}\delta_n\chi_{_{Q^n}}(x)=\ell_m^{\sigma}F(x). \label{0potenRiesz3}
\end{eqnarray}
Hence  
\begin{eqnarray*}
G(x)\leq\sum_{m\geq0}\min\left\{\ell_m^{\sigma}F(x),\ell_m^{\sigma-\frac{d}{w}}\right\},
\end{eqnarray*}
by (\ref{0potenRiesz2}) and (\ref{0potenRiesz3}). Since the cubes $\left\{Q^m\right\}_{m\geq0}$ are dyadic cubes with $Q^m\neq Q^{n}$ for $0\leq m<n<+\infty$, we obtain 
\begin{eqnarray*}
G(x)\leq C F(x)^{\frac{w}{u}}
\end{eqnarray*} 
with $C=\frac{1}{1-2^{\sigma-\frac{d}{w}}}+\frac{1}{1-2^{-\sigma}}>0$, according to Relation (\ref{potenRiesz}). It follows that 
\begin{eqnarray*}
\left\|G\right\|_{u,v}\leq C\left\|F^{\frac{w}{u}}\right\|_{u,v}=C\left\|F\right\|_{w,\frac{w}{u}v}^{\frac{w}{u}}.
\end{eqnarray*}
But, $h\leq\frac{w}{u}v$ by (\ref{potenRiesz}) and (\ref{0potenRiesz}). Hence 
\begin{eqnarray*}
\left\|G\right\|_{u,v}\leq C\left\|F\right\|_{w,\frac{w}{u}v}^{\frac{w}{u}}\leq C\left\|F\right\|_{w,h}^{\frac{w}{u}}=C.
\end{eqnarray*}

$2^{nd}$ case : we consider cubes $Q^m$ such that $|Q^m|\geq 2$. For each of these cubes, we have 
\begin{eqnarray*}
\sum_{k\in\mathbb{Z}^d}|Q^m\cap Q_k|^{\frac{h}{w}}=\left\|\chi_{_{Q^m}}\right\|_{w,h}^h\leq\frac{1}{\delta_m^h}\cdot
\end{eqnarray*}
Thus we have 
\begin{eqnarray*}
\frac{1}{\delta_m^h}\geq\left\|\chi_{_{Q^m}}\right\|_{w,h}^h&=&\sum_{\underset{Q^m\cap Q_k\neq\emptyset}{k\in\mathbb{Z}^d}}|Q^m\cap Q_k|^{\frac{h}{w}}\\
&=&\sum_{\underset{Q^m\cap Q_k\neq\emptyset}{k\in\mathbb{Z}^d}}|Q_k|^{\frac{h}{w}}=\ell_m^d,
\end{eqnarray*}
 since $Q^m$ is a dyadic cube of generation $\upsilon_m<0$. Therefore, for fixed $x\in\mathbb R^d$, we have
\begin{eqnarray}
\delta_m\ell_m^{\sigma}\chi_{_{Q^m}}(x)\leq\ell_m^{\sigma-\frac{d}{h}}. \label{0potenRiesz8}
\end{eqnarray}
Thus, 
\begin{eqnarray*}
G(x)\leq\sum_{m\geq0}\min\left\{\ell_m^{\sigma}F(x),\ell_m^{\sigma-\frac{d}{h}}\right\},
\end{eqnarray*}
by (\ref{0potenRiesz3}) and (\ref{0potenRiesz8}). Thanks to Relation (\ref{0potenRiesz}), we obtain as in the first case, 
\begin{eqnarray*}
G(x)\leq C F(x)^{\frac{h}{v}},
\end{eqnarray*}
with $C=\frac{1}{1-2^{-\sigma}}+\frac{1}{1-2^{\sigma-\frac{d}{h}}}>0$. Then, it follows that  
\begin{eqnarray*}
\left\|G\right\|_{u,v}\leq C\left\|F^{\frac{h}{v}}\right\|_{u,v}=C\left\|F\right\|_{\frac{h}{v}u,h}^{\frac{h}{v}}\leq C\left\|F\right\|_{w,h}^{\frac{h}{v}}=C, 
\end{eqnarray*}
since $\frac{h}{v}u\leq w$ according to Relations (\ref{potenRiesz}) and (\ref{0potenRiesz}). 

Combining both cases, we finally obtain 
\begin{eqnarray*}
\left\|G\right\|_{u,v}&\leq&C(u,v)\left(\left\|\sum_{n\geq0:\ \ell_n\leq1}\delta_n\ell_n^\sigma\chi_{_{Q^n}}\right\|_{u,v}+\left\|\sum_{n\geq0:\ \ell_n>1}\delta_n\ell_n^\sigma\chi_{_{Q^n}} \right\|_{u,v}\right)\\
&\leq&C(d,\sigma,w,h,u,v).
\end{eqnarray*}
The lemma is then proved in the case of dyadic cubes.

 For the general case, we consider for each cube $Q^n$ (not necessarily dyadic), a dyadic cube $R^n:=2^{-\upsilon_n}\prod_{i=1}^d[m_{n,i}\ ,\ m_{n,i}+1)$  such that $$10^d|Q^n|\geq|R^n|\ \ \text{ and }\ \ Q^n\subset 3R^n:=2^{-\upsilon_n}\prod_{i=1}^d[m_{n,i}-1\ ,\ m_{n,i}+2).$$ 
We have 
\begin{eqnarray*}
\left\|\sum_{n\geq0}\delta_n\ell_n^\sigma\chi_{_{Q^n}}\right\|_{u,v}&\leq&3^\sigma\left\|\sum_{n\geq0}\delta_n\ell(R^n)^\sigma\chi_{_{Q^n}}\right\|_{u,v}\\
&\leq&C(d,\gamma,\sigma)\left\|\sum_{n\geq0}\delta_n\ell(R^n)^\sigma\left[\mathfrak{M}\left(\chi_{_{R^n}}\right)\right]^\gamma\right\|_{u,v}\\
&\leq&C(d,u,v,\gamma,\sigma)\left\|\sum_{n\geq0}\delta_n\ell(R^n)^\sigma\chi_{_{R^n}}\right\|_{u,v},
\end{eqnarray*}
for all $\gamma>\max\left\{1,\frac{1}{u}\right\}$, by Proposition \ref{operamaxima}. Since (\ref{poten0Riesz}) is proved for dyadic cubes, we obtain 
\begin{eqnarray*}
\left\|\sum_{n\geq0}\delta_n\ell_n^\sigma\chi_{_{Q^n}}\right\|_{u,v}&\leq&C(d,\sigma,w,h,u,v)\left\|\sum_{n\geq0}\delta_n\chi_{_{R^n}}\right\|_{w,h}\\
&\leq&C(d,\sigma,w,h,u,v,\gamma)\left\|\sum_{n\geq0}\delta_n\left[\mathfrak{M}\left(\chi_{_{Q^n}}\right)\right]^\gamma\right\|_{w,h}\\
&\leq&C(d,\sigma,w,h,u,v)\left\|\sum_{n\geq0}\delta_n\chi_{_{Q^n}}\right\|_{w,h},
\end{eqnarray*}
where $\gamma=1+\frac{1}{w}$ , by Proposition \ref{operamaxima}.
This ends the proof of the lemma.   
\end{proof}

\begin{thm} \label{propofondaRiez1}
Let $0<q\leq\min(1,p)<+\infty$, $0<\alpha<d$ and $0<q_1\leq p_1<+\infty$ such that   
\begin{eqnarray}
\frac{1}{q_1}=\frac{1}{q}-\frac{\alpha}{d} \label{potenRiesz4}
\end{eqnarray}
and 
\begin{eqnarray}
\frac{1}{p_1}=\frac{1}{p}-\frac{\alpha}{d}\cdot \label{potenRiesz04}
\end{eqnarray}
Then, $I_\alpha$ extends to a bounded operator from $\mathcal{H}^{(q,p)}$ to $\mathcal{H}^{(q_1,p_1)}$.
\end{thm}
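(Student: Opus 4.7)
The approach is to transport the atomic decomposition of $\mathcal{H}^{(q,p)}$ through $I_\alpha$ into a molecular decomposition of $\mathcal{H}^{(q_1,p_1)}$, and then use Lemma \ref{propofondaRiez} to translate the resulting norm on the $(q_1,p_1)$-side back to the $(q,p)$-side. Fix integers $\delta_1 \geq \lfloor d(1/q_1 - 1)\rfloor$ and $\delta \geq d + \alpha + 2\delta_1 + 2$, a real $r$ with $r > \max\{p_1,\,1,\,d/(d-\alpha)\}$, and $0 < \eta < q$ (so automatically $\eta < q_1$, since $q<q_1$). For $f \in \mathcal{H}^{(q,p)}_{fin}$, write $f = \sum_{n=0}^j \lambda_n \textbf{a}_n$ with $(q,\infty,\delta)$-atoms $\textbf{a}_n$ supported on cubes $Q^n$ via Theorem \ref{thafonda}.

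The central technical step is to exhibit a constant $c>0$ independent of $n$ such that $c^{-1}I_\alpha(\textbf{a}_n)$ is a $(q_1,r,\delta_1)$-molecule centered at $Q^n$ in the sense of Definition \ref{defhqpatomolec}. Condition~(1) follows from the Hardy--Littlewood--Sobolev inequality $\|I_\alpha \textbf{a}_n\|_r \lesssim \|\textbf{a}_n\|_{r_0}$, where $1/r_0 = 1/r + \alpha/d$ and the choice $r>d/(d-\alpha)$ guarantees $r_0>1$, together with $\|\textbf{a}_n\|_{r_0} \leq |Q^n|^{1/r_0 - 1/q} = |Q^n|^{1/r - 1/q_1}$. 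Condition~(2) follows from Taylor-expanding $y\mapsto K_\alpha(x-y)$ around $y=x_{Q^n}$ up to order $\delta$ and exploiting the vanishing moments of $\textbf{a}_n$: for $x\notin\widetilde{Q^n}$,
\begin{equation*}
|I_\alpha \textbf{a}_n(x)| \lesssim \frac{\ell(Q^n)^{\delta+1}}{|x-x_{Q^n}|^{d-\alpha+\delta+1}}\int_{Q^n}|\textbf{a}_n(y)|\,dy \lesssim |Q^n|^{-1/q_1}\left(1+\tfrac{|x-x_{Q^n}|}{\ell(Q^n)}\right)^{-(d-\alpha+\delta+1)},
\end{equation*}
and the choice of $\delta$ ensures $d-\alpha+\delta+1 \geq 2d+2\delta_1+3$. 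Condition~(3) comes from the Fourier identity $\widehat{I_\alpha \textbf{a}_n}(\xi) = |\xi|^{-\alpha}\widehat{\textbf{a}_n}(\xi)$: since $\widehat{\textbf{a}_n}$ vanishes to order $\delta+1$ at $0$ and $\delta \geq \delta_1+\alpha$, the product vanishes to order at least $\delta_1+1$ at the origin, which together with the decay of condition~(2) justifies $\int x^\beta I_\alpha \textbf{a}_n(x)\,dx = 0$ for $|\beta|\leq \delta_1$.

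Applying Theorem \ref{thafondammolec2} to the molecular series $I_\alpha f = \sum_n (c\lambda_n)\bigl(c^{-1}I_\alpha \textbf{a}_n\bigr)$ yields
\begin{equation*}
\|I_\alpha f\|_{\mathcal{H}^{(q_1,p_1)}} \lesssim \left\|\sum_n \left(\frac{|\lambda_n|}{\|\chi_{Q^n}\|_{q_1}}\right)^\eta \chi_{Q^n}\right\|_{q_1/\eta,\,p_1/\eta}^{1/\eta}.
\end{equation*}
Since $\|\chi_{Q^n}\|_{q_1} = |Q^n|^{1/q_1}$ and $\|\chi_{Q^n}\|_{q} = |Q^n|^{1/q}$, the identity $1/q - 1/q_1 = \alpha/d$ gives $\|\chi_{Q^n}\|_{q_1}^{-1} = \ell(Q^n)^\alpha\|\chi_{Q^n}\|_q^{-1}$. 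The sum inside the norm thus equals $\sum_n \bigl(|\lambda_n|/\|\chi_{Q^n}\|_q\bigr)^\eta \ell(Q^n)^{\alpha\eta}\chi_{Q^n}$, to which Lemma \ref{propofondaRiez} applies with $\sigma=\alpha\eta\in(0,d)$, $(w,h)=(q/\eta,p/\eta)$ and $(u,v)=(q_1/\eta,p_1/\eta)$; the Sobolev relations $1/u=1/w-\sigma/d$ and $1/v=1/h-\sigma/d$ are precisely (\ref{potenRiesz4}) and (\ref{potenRiesz04}). This bounds the expression by $C\,\|\sum_n(|\lambda_n|/\|\chi_{Q^n}\|_q)^\eta\chi_{Q^n}\|_{q/\eta,\,p/\eta}^{1/\eta}$. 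Taking the infimum over atomic decompositions of $f$ and invoking the density of $\mathcal{H}^{(q,p)}_{fin}$ in $\mathcal{H}^{(q,p)}$ (Theorem \ref{thafondamfini}) gives $\|I_\alpha f\|_{\mathcal{H}^{(q_1,p_1)}} \lesssim \|f\|_{\mathcal{H}^{(q,p)}}$, yielding the bounded extension.

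\textbf{Main obstacle.} The most delicate piece is the rigorous verification of the three molecule conditions for $I_\alpha(\textbf{a}_n)$, above all the vanishing moment condition~(3): $I_\alpha \textbf{a}_n$ decays only polynomially, so the moment integrals are only conditionally summable a priori, and either the interchange with the kernel integration or the Fourier-side argument needs careful justification using the $(\delta+1)$-th order vanishing of $\widehat{\textbf{a}_n}$ at the origin. A secondary difficulty is the bookkeeping of parameters: $r$ must simultaneously satisfy $\max\{p_1,1\}<r$ (for Theorem \ref{thafondammolec2}) and $r>d/(d-\alpha)$ (for Hardy--Littlewood--Sobolev), and $\delta$ must be large enough for both the decay estimate and the moment calculation. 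In addition, if the Sobolev exponent $q_1$ exceeds $1$ one must separately reconcile this with the standing hypothesis $q\leq 1$ of Section~2.2 under which Theorem \ref{thafondammolec2} is stated, e.g.\ by appealing to the amalgam identification $\mathcal{H}^{(q_1,p_1)}=(L^{q_1},\ell^{p_1})$ in that regime.
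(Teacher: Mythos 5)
Your route is genuinely different from the paper's. You push each atom through $I_\alpha$, verify that a fixed multiple of $I_\alpha(\mathbf{a}_n)$ is a $(q_1,r,\delta_1)$-molecule centered at $Q^n$, invoke the molecular synthesis theorem (Theorem \ref{thafondammolec2}) for $\mathcal{H}^{(q_1,p_1)}$, and convert the resulting quasi-norm back to the $(q,p)$-scale with Lemma \ref{propofondaRiez}. The paper never forms molecules: it estimates $\|\mathcal{M}_0(I_\alpha f)\|_{q_1,p_1}$ (resp. $\|I_\alpha f\|_{q_1,p_1}$ when $q_1>1$) directly, splitting each term into its restriction to $\widetilde{Q^n}$ (handled by the Hardy--Littlewood--Sobolev inequality and the $L^{r/\eta}$-block argument) and its tail (dominated pointwise by $\ell_n^{\alpha}\left[\mathfrak{M}(\chi_{Q^n})\right]^{\vartheta}/\|\chi_{Q^n}\|_q$ via Taylor expansion of $K_\alpha\ast\varphi_t$), and only then applies Lemma \ref{propofondaRiez}. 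The authors explicitly note after their proof that the molecular characterization could be used when $0<q_1\leq 1$, so in that regime your argument is a legitimate alternative; your parameter choices check out, and your Fourier-side treatment of the moment condition is the standard correct one since $x^{\beta}I_\alpha\mathbf{a}_n\in L^1$ for $|\beta|\leq\delta_1$ under your choice of $\delta$. The trade-off is that the molecular route packages the three verifications once and for all but demands a larger $\delta$, while the paper's direct route avoids the moment condition entirely.

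The genuine gap is the case $q_1>1$, which occurs whenever $\alpha/d>1/q-1$ and is squarely within the scope of the theorem. Definition \ref{defhqpatomolec} and Theorem \ref{thafondammolec2} are formulated only under the standing hypothesis $0<q\leq 1$, so there is no molecular synthesis result available for $\mathcal{H}^{(q_1,p_1)}$ once $q_1>1$. Your proposed remedy, the identification $\mathcal{H}^{(q_1,p_1)}=(L^{q_1},\ell^{p_1})$, is where such a proof must begin rather than end: one still has to bound $\|I_\alpha f\|_{q_1,p_1}$ by the atomic quasi-norm of $f$, which is a separate estimate (the paper's Case 1) combining Hardy--Littlewood--Sobolev for $\sum_n|\lambda_n||I_\alpha\mathbf{a}_n|\chi_{\widetilde{Q^n}}$, the pointwise tail bound together with Proposition \ref{operamaxima}, and then Lemma \ref{propofondaRiez} again. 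As written, your proposal establishes the theorem only for $0<q_1\leq 1$. A secondary, easily repaired point: you decompose $f$ into $(q,\infty,\delta)$-atoms but then invoke Theorem \ref{thafondamfini}, whose $r=\infty$ part gives the equivalence of $\|\cdot\|_{\mathcal{H}_{fin}^{(q,p)}}$ and $\|\cdot\|_{\mathcal{H}^{(q,p)}}$ only on $\mathcal{H}_{fin}^{(q,p)}\cap\mathcal{C}(\mathbb{R}^d)$; either restrict to continuous finite combinations (still dense) or, as the paper does, work with $(q,r',\delta)$-atoms for a large finite $r'$.
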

\begin{proof}
Let $\delta\geq\max\left\{\left\lfloor d\left(\frac{1}{q}-1\right)\right\rfloor;\alpha\right\}$ be an integer and $r>\max\left\{p_1;\frac{d}{d-\alpha}\right\}$ be a real. Let $\mathcal{H}_{fin}^{(q,p)}$ be the space of finite linear combinations of $(q,r,\delta)$-atoms and $f\in\mathcal{H}_{fin}^{(q,p)}$. Then, there exist a finite sequence $\left\{(\textbf{a}_n, Q^n)\right\}_{n=0}^j$ in $\mathcal{A}(q,r,\delta)$ and a finite sequence of scalars $\left\{{\lambda}_n\right\}_{n=0}^j$ such that $f=\sum_{n=0}^j\lambda_n\textbf{a}_n$. Set $\widetilde{Q^n}:=2\sqrt{d}Q^n$, $n\in\left\{0,1,\ldots,j\right\}$, and denote by $x_n$ and $\ell_n$ respectively the center and the side-length of $Q^n$. 

Now, we distinguish the cases: $1<q_1<+\infty$ and $0<q_1\leq 1$.

Case 1: $1<q_1<+\infty$. Then, $\mathcal{H}^{(q_1,p_1)}=(L^{q_1},\ell^{p_1})$ with norms equivalence, by \cite{AbFt}, Theorem 3.2, p. 1905. With this in mind, we have
\begin{eqnarray*}
|I_\alpha(f)(x)|\leq\sum_{n=0}^j|\lambda_n|\left(|I_\alpha(\textbf{a}_n)(x)|\chi_{_{\widetilde{Q^n}}}(x)+|I_\alpha(\textbf{a}_n)(x)|\chi_{_{\mathbb{R}^d\backslash{\widetilde{Q^n}}}}(x)\right),
\end{eqnarray*}
for almost all $x\in\mathbb{R}^d$. Hence 
\begin{eqnarray*}
\left\|I_\alpha(f)\right\|_{q_1,p_1}\leq\left\|\sum_{n=0}^j|\lambda_n||I_\alpha(\textbf{a}_n)|\chi_{_{\widetilde{Q^n}}}\right\|_{q_1,p_1}+\left\|\sum_{n=0}^j|\lambda_n||I_\alpha(\textbf{a}_n)|\chi_{_{\mathbb{R}^d\backslash{\widetilde{Q^n}}}}\right\|_{q_1,p_1}.
\end{eqnarray*}
Fix $0<\eta<q$. Set $$I=\left\|\sum_{n=0}^j|\lambda_n||I_\alpha(\textbf{a}_n)|\chi_{_{\widetilde{Q^n}}}\right\|_{q_1,p_1}\ \text{ and }\ J=\left\|\sum_{n=0}^j|\lambda_n||I_\alpha(\textbf{a}_n)|\chi_{_{\mathbb{R}^d\backslash{\widetilde{Q^n}}}}\right\|_{q_1,p_1}.$$ Since $r>\frac{d}{d-\alpha}$, there exists $1<s<r$ such that 
\begin{eqnarray}
\frac{1}{r}=\frac{1}{s}-\frac{\alpha}{d} \label{potenRiesz5}
\end{eqnarray}
and 
\begin{eqnarray}
\frac{1}{r}-\frac{1}{q_1}=\frac{1}{s}-\frac{1}{q}, \label{potenRiesz6}
\end{eqnarray}
by (\ref{potenRiesz4}). Thus, 
\begin{eqnarray*}
\left\|I_\alpha(\textbf{a}_n)\chi_{_{\widetilde{Q^n}}}\right\|_{r}\leq\left\|I_\alpha(\textbf{a}_n)\right\|_{r}
&\lsim&\left\|\textbf{a}_n\right\|_{s}\\
&\lsim&|Q^n|^{\frac{1}{s}-\frac{1}{q}} = C|Q^n|^{\frac{1}{r}-\frac{1}{q_1}}=C|\widetilde{Q^n}|^{\frac{1}{r}-\frac{1}{q_1}}\ ,
\end{eqnarray*}
by (\ref{potenRiesz5}), the fact that $I_\alpha$ is bounded from $L^s$ to $L^r$, $\mathcal{A}(q,r,\delta)\subset\mathcal{A}(q,s,\delta)$ and (\ref{potenRiesz6}). Furthermore, $1<q_1\leq p_1<r<+\infty$ and $\text{supp}(|I_\alpha(\textbf{a}_n)|\chi_{_{\widetilde{Q^n}}})\subset\widetilde{Q^n}$. Hence 
\begin{eqnarray*}
I\lsim\left\|\sum_{n=0}^j\frac{|\lambda_n|}{\left\|\chi_{_{\widetilde{Q^n}}}\right\|_{q_1}}\chi_{_{\widetilde{Q^n}}}\right\|_{q_1,p_1}\lsim\left\|\sum_{n=0}^j\frac{|\lambda_n|}{\left\|\chi_{_{Q^n}}\right\|_{q_1}}\chi_{_{Q^n}}\right\|_{q_1,p_1},
\end{eqnarray*}
by Proposition 4.5 in \cite{AbFt} and the proof of Theorem 4.6 in  \cite{AbFt}. But, 
\begin{eqnarray}
\left\|\chi_{_{Q^n}}\right\|_{q_1}=|Q^n|^{\frac{1}{q_1}}=|Q^n|^{\frac{1}{q}-\frac{\alpha}{d}}=\left\|\chi_{_{Q^n}}\right\|_{q}\ell_n^{-\alpha}.\label{0potenRiesz06}
\end{eqnarray}
Hence  
\begin{eqnarray*}
I\lsim\left\|\sum_{n=0}^j\frac{|\lambda_n|}{\left\|\chi_{_{Q^n}}\right\|_{q}}\chi_{_{Q^n}}\right\|_{q,p}\ , 
\end{eqnarray*}
by Lemma \ref{propofondaRiez}. To estimate $J$, notice that
\begin{eqnarray}
|\partial^{\beta}K_\alpha(x)|\leq \frac{C_{d,\alpha,\delta}}{|x|^{d-\alpha+|\beta|}}\ , \label{potenRiesz8}
\end{eqnarray}
for all $x\in\mathbb{R}^d\backslash\left\{0\right\}$ and all multi-indices $\beta$ with $|\beta|\leq\delta+1$, where $C_{d,\alpha,\delta}>0$ is a constant independent of $\beta$ and $x$. Consider $x\notin\widetilde{Q^n}$. We have $x-y\neq0$, for all $y\in Q^n$. Thus, using Taylor's formula and the vanishing condition of the atom $\textbf{a}_n$ , we obtain 
\begin{eqnarray*}
|I_\alpha(\textbf{a}_n)(x)|=\left|\int_{Q^n}K_\alpha(x-y)\textbf{a}_n(y)dy\right|\leq\int_{Q^n}|R_\alpha(x-y,x-x_n)||\textbf{a}_n(y)|dy,
\end{eqnarray*}
with
\begin{eqnarray*}
|R_\alpha(x-y,x-x_n)|&=&\left|\delta'\sum_{|\beta|=\delta'}\frac{(x_n-y)^{\beta}}{\beta!}\int_{0}^1(1-\theta)^{\delta}\partial^{\beta}K_\alpha((x-x_n)+\theta(x_n-y))d\theta\right|\\
&\lsim&\frac{|x_n-y|^{\delta+1}}{|x-x_n|^{d-\alpha+\delta+1}}\ ,
\end{eqnarray*}
for all $y\in Q^n$, where $\delta'=\delta+1$, by (\ref{potenRiesz8}). Hence 
\begin{eqnarray*}
|I_\alpha(\textbf{a}_n)(x)|&\lsim&\int_{Q^n}\frac{|x_n-y|^{\delta+1}}{|x-x_n|^{d-\alpha+\delta+1}}|\textbf{a}_n(y)|dy\\
&\lsim&\frac{\ell_n^{d-\alpha+\delta+1}}{|x-x_n|^{d-\alpha+\delta+1}}\frac{\ell_n^\alpha}{\left\|\chi_{_{Q^n}}\right\|_q}\\ 
&\lsim&\ell_n^\alpha\frac{\left[\mathfrak{M}(\chi_{_{Q^n}})(x)\right]^{\vartheta}}{\left\|\chi_{_{Q^n}}\right\|_q}\ ,
\end{eqnarray*}
where $\vartheta=\frac{d-\alpha+\delta+1}{d}\cdot$ Moreover, since $\delta\geq\max\left\{\left\lfloor d\left(\frac{1}{q}-1\right)\right\rfloor;\alpha\right\}$, we have $1<\vartheta$ and $1<q_1\vartheta\leq p_1\vartheta<+\infty$. Therefore,
\begin{eqnarray*}
J&\lsim& \left\|\sum_{n=0}^j|\lambda_n|\ell_n^\alpha\frac{\left[\mathfrak{M}(\chi_{_{Q^n}})\right]^{\vartheta}}{\left\|\chi_{_{Q^n}}\right\|_q}\right\|_{q_1,p_1}\\
&\lsim&\left\|\sum_{n=0}^j\frac{|\lambda_n|}{\left\|\chi_{_{Q^n}}\right\|_q}\ell_n^\alpha\chi_{_{Q^n}}\right\|_{q_1,p_1}\lsim\left\|\sum_{n=0}^j\frac{|\lambda_n|}{\left\|\chi_{_{Q^n}}\right\|_q}\chi_{_{Q^n}}\right\|_{q,p}\ , 
\end{eqnarray*}
by the proof of Theorem 4.6 in  \cite{AbFt} and Lemma \ref{propofondaRiez}. Finally, we obtain  
\begin{eqnarray*}
\left\|I_\alpha(f)\right\|_{q_1,p_1}\lsim\left\|\sum_{n=0}^j\frac{|\lambda_n|}{\left\|\chi_{_{Q^n}}\right\|_q}\chi_{_{Q^n}}\right\|_{q,p}\lsim\left\|\sum_{n=0}^j\left(\frac{|\lambda_n|}{\left\|\chi_{_{Q^n}}\right\|_q}\right)^{\eta}\chi_{_{Q^n}}\right\|_{\frac{q}{\eta},\frac{p}{\eta}}^{\frac{1}{\eta}}\cdot
\end{eqnarray*}
Thus,
\begin{eqnarray*}
\left\|I_\alpha(f)\right\|_{q_1,p_1}\lsim\left\|f\right\|_{\mathcal{H}_{fin}^{(q,p)}}\lsim\left\|f\right\|_{\mathcal{H}^{(q,p)}},
\end{eqnarray*}
by Theorem \ref{thafondamfini}, since (\ref{potenRiesz04}) implies that $r>p_1>p$. Hence $I_\alpha$ is bounded from $\mathcal{H}_{fin}^{(q,p)}$ to $(L^{q_1},\ell^{p_1})$. The density of $\mathcal{H}_{fin}^{(q,p)}$ in $\mathcal{H}^{(q,p)}$ and the fact that $(L^{q_1},\ell^{p_1})=\mathcal{H}^{(q_1,p_1)}$ with norms equivalence yield the result.

Case 2: $0<q_1\leq 1$. We have
\begin{eqnarray*}
\mathcal{M}_{0}(I_\alpha(f))(x)\leq\sum_{n=0}^j|\lambda_n|\left[\mathcal{M}_{0}(I_\alpha(\textbf{a}_n))(x)\chi_{_{\widetilde{Q^n}}}(x)+\mathcal{M}_{0}(I_\alpha(\textbf{a}_n))(x)\chi_{_{\mathbb{R}^d\backslash{\widetilde{Q^n}}}}(x)\right].
\end{eqnarray*}
for all $x\in\mathbb{R}^d$. Hence 
\begin{eqnarray*}
\left\|\mathcal{M}_{0}(I_\alpha(f))\right\|_{q_1,p_1}\lsim I+J 
\end{eqnarray*}
with $$I=\left\|\sum_{n=0}^j|\lambda_n|\mathcal{M}_{0}(I_\alpha(\textbf{a}_n))\chi_{_{\widetilde{Q^n}}}\right\|_{q_1,p_1}\ \text{ and }\ J=\left\|\sum_{n=0}^j|\lambda_n|\mathcal{M}_{0}(I_\alpha(\textbf{a}_n))\chi_{_{\mathbb{R}^d\backslash{\widetilde{Q^n}}}}\right\|_{q_1,p_1}.$$ Fix $0<\eta<q$. Let us consider  $1<s<r$ as in (\ref{potenRiesz5}). We have
\begin{eqnarray*}
\left\|\left(\mathcal{M}_{0}(I_\alpha(\textbf{a}_n))\chi_{_{\widetilde{Q^n}}}\right)^{\eta}\right\|_{\frac{r}{\eta}}
&\leq&\left\|\mathcal{M}_{0}(I_\alpha(\textbf{a}_n))\right\|_{r}^{\eta}\\
&\lsim&\left\|\mathfrak{M}(I_\alpha(\textbf{a}_n))\right\|_{r}^{\eta}\\
&\lsim&\left\|I_\alpha(\textbf{a}_n)\right\|_{r}^{\eta}\\
&\lsim&\left\|\textbf{a}_n\right\|_{s}^{\eta}\lsim|Q^{n}|^{\left(\frac{1}{s}-\frac{1}{q}\right)\eta}=C|\widetilde{Q^n}|^{\frac{1}{\frac{r}{\eta}}-\frac{1}{\frac{q_1}{\eta}}},
\end{eqnarray*}
by (\ref{potenRiesz6}). Moreover, $\text{supp}\left(\mathcal{M}_{0}(I_\alpha(\textbf{a}_n))\chi_{_{\widetilde{Q^n}}}\right)^{\eta}\subset\widetilde{Q^n}$ and $1<\frac{q_1}{\eta}\leq\frac{p_1}{\eta}<\frac{r}{\eta}\cdot$
Hence
\begin{eqnarray*}
I&\leq&\left\|\sum_{n=0}^j|\lambda_n|^{\eta}\left(\mathcal{M}_{0}(I_\alpha(\textbf{a}_n))\chi_{_{\widetilde{Q^n}}}\right)^{\eta}\right\|_{\frac{q_1}{\eta},\frac{p_1}{\eta}}^{\frac{1}{\eta}}\lsim\left\|\sum_{n=0}^j\frac{|\lambda_n|^\eta}{\left\|\chi_{_{\widetilde{Q^n}}}\right\|_{\frac{q_1}{\eta}}}\chi_{_{\widetilde{Q^n}}}\right\|_{\frac{q_1}{\eta},\frac{p_1}{\eta}}^{\frac{1}{\eta}}\\ 
&\lsim& \left\|\sum_{n=0}^j\left(\frac{|\lambda_n|}{\left\|\chi_{_{Q^n}}\right\|_{q_1}}\right)^\eta\chi_{_{Q^n}}\right\|_{\frac{q_1}{\eta},\frac{p_1}{\eta}}^{\frac{1}{\eta}}\lsim\left\|\sum_{n=0}^j\left(\frac{|\lambda_n|}{\left\|\chi_{_{Q^n}}\right\|_{q}}\right)^\eta\ell_n^{\alpha\eta}\chi_{_{Q^n}}\right\|_{\frac{q_1}{\eta},\frac{p_1}{\eta}}^{\frac{1}{\eta}},
\end{eqnarray*}
by Proposition 4.5 in \cite{AbFt}, the proof of Theorem 4.6 in  \cite{AbFt} and (\ref{0potenRiesz06}). Set $\rho=\alpha\eta$ , $u=\frac{q_1}{\eta}$ ; $v=\frac{p_1}{\eta}$ ; $w=\frac{q}{\eta}$ and $h=\frac{p}{\eta}\cdot$ We have $0<\rho<d$ , $0<w\leq h<+\infty$ and $0<u\leq v<+\infty$. Furthermore, 
\begin{eqnarray*}
\frac{1}{u}=\frac{1}{w}-\frac{\rho}{d}\ \ \text{ and }\ \ \frac{1}{v}=\frac{1}{h}-\frac{\rho}{d}\ , 
\end{eqnarray*}
by (\ref{potenRiesz4}) and (\ref{potenRiesz04}). Thus, setting $\delta_n=\left(\frac{|\lambda_n|}{\left\|\chi_{_{Q^n}}\right\|_{q}}\right)^\eta$, for $n=0,1,\ldots,j$ , we have 
\begin{eqnarray*}
\left\|\sum_{n=0}^j\left(\frac{|\lambda_n|}{\left\|\chi_{_{Q^n}}\right\|_{q}}\right)^\eta\ell_n^{\alpha\eta}\chi_{_{Q^n}}\right\|_{\frac{q_1}{\eta},\frac{p_1}{\eta}}&=&\left\|\sum_{n=0}^j\delta_n\ell_n^{\rho}\chi_{_{Q^n}}\right\|_{u,v}\lsim\left\|\sum_{n=0}^j\delta_n\chi_{_{Q^n}}\right\|_{w,h}\\
&=&C\left\|\sum_{n=0}^j\left(\frac{|\lambda_n|}{\left\|\chi_{_{Q^n}}\right\|_{q}}\right)^\eta\chi_{_{Q^n}}\right\|_{\frac{q}{\eta},\frac{p}{\eta}},
\end{eqnarray*}
by Lemma \ref{propofondaRiez}. Hence 
\begin{eqnarray}
I\lsim\left\|\sum_{n=0}^j\left(\frac{|\lambda_n|}{\left\|\chi_{_{Q^n}}\right\|_{q}}\right)^\eta\chi_{_{Q^n}}\right\|_{\frac{q}{\eta},\frac{p}{\eta}}^{\frac{1}{\eta}}. \label{potenRiesz11}
\end{eqnarray}
To estimate $J$, it suffices to show that 
\begin{eqnarray}
\mathcal{M}_{0}(I_\alpha(\textbf{a}_n))(x)\lsim\ell_n^\alpha\frac{\left[\mathfrak{M}(\chi_{_{Q^n}})(x)\right]^{\vartheta}}{\left\|\chi_{_{Q^n}}\right\|_q}\ , \label{potenRiesz12}
\end{eqnarray}
for all $x\notin\widetilde{Q^n}$, where $\vartheta=\frac{d-\alpha+\delta+1}{d}\cdot$ To prove (\ref{potenRiesz12}), we consider $x\notin\widetilde{Q^n}$. For every $t>0$, set $K_\alpha^{(t)}=K_\alpha\ast\varphi_t$. We have 
\begin{eqnarray}
\sup_{t>0}|\partial^{\beta}_z K_\alpha^{(t)}(z)|\leq\frac{C_{\varphi,d,\alpha,\delta}}{|z|^{d-\alpha+\delta+1}}\ , \label{potenRiesz13}
\end{eqnarray}
for all $z\in\mathbb{R}^d\backslash\left\{0\right\}$ and all multi-indexes $\beta$ with $|\beta|=\delta+1$. The proof of (\ref{potenRiesz13}) is similar to the one of (\ref{applicattheo19}) in the proof of Theorem \ref{theoremsing5}; therefore, details are left to the reader. With (\ref{potenRiesz13}), by using Taylor's formula and the vanishing condition of the atom $\textbf{a}_n$, we have obtain, as in Case 1,
\begin{eqnarray*}
|(I_\alpha(\textbf{a}_n)\ast\varphi_t)(x)|&=&|\textbf{a}_n\ast K_\alpha^{(t)}(x)|=\left|\int_{Q^n}K_\alpha^{(t)}(x-y)\textbf{a}_n(y)dy\right|\\
&\leq&\int_{Q^n}|R_\alpha^{(t)}(x-y,x-x_n)||\textbf{a}_n(y)|dy\\
&\lsim&\int_{Q^n}\frac{|x_n-y|^{\delta+1}}{|x-x_n|^{d-\alpha+\delta+1}}|\textbf{a}_n(y)|dy\lsim\ell_n^\alpha\frac{\left[\mathfrak{M}(\chi_{_{Q^n}})(x)\right]^{\vartheta}}{\left\|\chi_{_{Q^n}}\right\|_q}\ ,
\end{eqnarray*}
where $\vartheta=\frac{d-\alpha+\delta+1}{d}\cdot$ Hence
\begin{eqnarray*}
\mathcal{M}_{0}(I_\alpha(\textbf{a}_n))(x)\lsim\ell_n^\alpha\frac{\left[\mathfrak{M}(\chi_{_{Q^n}})(x)\right]^{\vartheta}}{\left\|\chi_{_{Q^n}}\right\|_q}\ ,
\end{eqnarray*}
with $\vartheta=\frac{d-\alpha+\delta+1}{d}\cdot$ This establishes (\ref{potenRiesz12}).\\
Moreover, since $\delta\geq\max\left\{\left\lfloor d\left(\frac{1}{q}-1\right)\right\rfloor;\alpha\right\}$, we have $1<\vartheta$ and $1<q_1\vartheta\leq p_1\vartheta<+\infty$, by (\ref{potenRiesz4}). Thus, as in Case 1, we obtain 
\begin{eqnarray*}
J&\lsim&\left\|\sum_{n=0}^j|\lambda_n|\ell_n^\alpha\frac{\left[\mathfrak{M}(\chi_{_{Q^n}})\right]^{\vartheta}}{\left\|\chi_{_{Q^n}}\right\|_q}\right\|_{q_1,p_1}\\
&\lsim&\left\|\sum_{n=0}^j\frac{|\lambda_n|}{\left\|\chi_{_{Q^n}}\right\|_q}\chi_{_{Q^n}}\right\|_{q,p}\lsim \left\|\sum_{n=0}^j\left(\frac{|\lambda_n|}{\left\|\chi_{_{Q^n}}\right\|_q}\right)^{\eta}\chi_{_{Q^n}}\right\|_{\frac{q}{\eta},\frac{p}{\eta}}^{\frac{1}{\eta}}\cdot 
\end{eqnarray*}
Hence 
\begin{eqnarray*}
\left\|\mathcal{M}_{0}(I_\alpha(f))\right\|_{q_1,p_1}\lsim\left\|\sum_{n=0}^j\left(\frac{|\lambda_n|}{\left\|\chi_{_{Q^n}}\right\|_q}\right)^{\eta}\chi_{_{Q^n}}\right\|_{\frac{q}{\eta},\frac{p}{\eta}}^{\frac{1}{\eta}}\cdot
\end{eqnarray*}
Thus,
\begin{eqnarray*}
\left\|I_\alpha(f)\right\|_{\mathcal{H}^{(q_1,p_1)}}=\left\|\mathcal{M}_{0}(I_\alpha(f))\right\|_{q_1,p_1}\lsim 
\left\|f\right\|_{\mathcal{H}_{fin}^{(q,p)}}\lsim\left\|f\right\|_{\mathcal{H}^{(q,p)}},
\end{eqnarray*}
by Theorem \ref{thafondamfini}, since (\ref{potenRiesz04}) implies that $r>p_1>p$. Therefore, $I_\alpha$ is bounded from $\mathcal{H}_{fin}^{(q,p)}$ to $\mathcal{H}^{(q_1,p_1)}$ and the density of $\mathcal{H}_{fin}^{(q,p)}$ in $\mathcal{H}^{(q,p)}$ yields the result. This ends the proof of Theorem \ref{propofondaRiez1}.  
\end{proof}

\begin{remark}
The proof of Theorem \ref{propofondaRiez1} in the second case ($0<q_1\leq 1$) is still valid whenever $1<q_1<+\infty$.
\end{remark}
We mention that one can use molecular characterization of $\mathcal H^{(q_{1},p_{1})}$ in the second case ($0<q_{1}\leq 1$) as defined in \cite{AbFt}, for the proof. 

\begin{cor}
$I_\alpha$ extends to a bounded operator from $\mathcal{H}^{(q,p)}$ to $\mathcal{H}^{(q_2,p_2)}$, if 
\begin{eqnarray}
\frac{1}{q_2}\geq\frac{1}{q}-\frac{\alpha}{d}\ \text{ and }\ \frac{1}{p_2}\leq\frac{1}{p}-\frac{\alpha}{d}\cdot \label{potenRiesz4bis}
\end{eqnarray}
\end{cor}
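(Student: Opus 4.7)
The plan is to deduce this from Theorem \ref{propofondaRiez1} by an embedding argument at the level of Wiener amalgam spaces. First, I would introduce the ``critical'' exponents $q_1,p_1$ defined by $1/q_1=1/q-\alpha/d$ and $1/p_1=1/p-\alpha/d$. Since the standing hypothesis $q\le p$ forces $q_1\le p_1$, and since the conditions (\ref{potenRiesz4bis}) tacitly require $1/q-\alpha/d>0$ (otherwise $1/q_2\ge 1/q-\alpha/d$ is vacuous or non-positive), we have $0<q_1\le p_1<\infty$. Theorem \ref{propofondaRiez1} then applies and yields a bounded extension $I_\alpha:\mathcal{H}^{(q,p)}\to\mathcal{H}^{(q_1,p_1)}$.

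Next, I would observe that the hypotheses (\ref{potenRiesz4bis}) translate exactly into the two inequalities $q_2\le q_1$ and $p_2\ge p_1$. Under these orderings I would establish a continuous embedding $\mathcal{H}^{(q_1,p_1)}\hookrightarrow\mathcal{H}^{(q_2,p_2)}$. Because the Hardy-amalgam quasi-norm is $\|f\|_{\mathcal{H}^{(q,p)}}=\|\mathcal{M}_0(f)\|_{q,p}$, this reduces to the analogous embedding $(L^{q_1},\ell^{p_1})\hookrightarrow(L^{q_2},\ell^{p_2})$ at the level of Wiener amalgams. That embedding in turn rests on two elementary monotonicity facts: on each unit cube $Q_k$, H\"older's inequality (or its $q<1$ analogue via Jensen) gives
$$\|g\chi_{_{Q_k}}\|_{q_2}\le|Q_k|^{1/q_2-1/q_1}\|g\chi_{_{Q_k}}\|_{q_1}=\|g\chi_{_{Q_k}}\|_{q_1}$$
since $|Q_k|=1$; and the standard sequence-space inclusion $\ell^{p_1}\hookrightarrow\ell^{p_2}$ holds whenever $p_1\le p_2$. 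Stacking these gives $\|g\|_{q_2,p_2}\le\|g\|_{q_1,p_1}$, and applying this estimate to $g=\mathcal{M}_0(f)$ transfers the embedding to the Hardy-amalgam scale.

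Composing the bounded map $I_\alpha:\mathcal{H}^{(q,p)}\to\mathcal{H}^{(q_1,p_1)}$ with the embedding $\mathcal{H}^{(q_1,p_1)}\hookrightarrow\mathcal{H}^{(q_2,p_2)}$ yields the desired conclusion. There is essentially no real obstacle here: the content of the corollary is simply that enlarging the target space (by lowering the local exponent or raising the global one) preserves boundedness, and this is automatic from the monotonicity properties of the amalgam scale recalled in the introduction (``the class of spaces decreases with respect to the local component and increases with respect to the global component'').
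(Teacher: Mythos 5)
Your proposal is correct and follows essentially the same route as the paper: the paper likewise reduces the corollary to Theorem \ref{propofondaRiez1} via the chain $\left\|I_\alpha(f)\right\|_{\mathcal{H}^{(q_2,p_2)}}\leq\left\|I_\alpha(f)\right\|_{\mathcal{H}^{(q_2,p_1)}}\leq\left\|I_\alpha(f)\right\|_{\mathcal{H}^{(q_1,p_1)}}\lsim\left\|f\right\|_{\mathcal{H}^{(q,p)}}$ on $\mathcal{H}_{fin}^{(q,p)}$ followed by density, invoking exactly the monotonicity of the amalgam scale that you spell out. The only difference is that you make the elementary embedding argument (H\"older on unit cubes and $\ell^{p_1}\hookrightarrow\ell^{p_2}$) explicit where the paper leaves it implicit.
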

\begin{proof}
(\ref{potenRiesz4bis}) implies that $q_2\leq q_1$ and $p_1\leq p_2$. Hence, by 
 Theorem \ref{propofondaRiez1}, 
$$\left\|I_\alpha(f)\right\|_{\mathcal{H}^{(q_2,p_2)}}\leq\left\|I_\alpha(f)\right\|_{\mathcal{H}^{(q_2,p_1)}}\leq\left\|I_\alpha(f)\right\|_{\mathcal{H}^{(q_1,p_1)}}\lsim\left\|f\right\|_{\mathcal{H}^{(q,p)}},$$
 for all $f\in\mathcal{H}_{fin}^{(q,p)}$, where $\mathcal{H}_{fin}^{(q,p)}$ is the subspace of $\mathcal{H}^{(q,p)}$ consisting of finite linear combinations of $(q,r,\delta)$-atoms with $\delta\geq\max\left\{\left\lfloor d\left(\frac{1}{q}-1\right)\right\rfloor;\alpha\right\}$ and $\max\left\{p_1;\frac{d}{d-\alpha}\right\}<r<+\infty$ fixed. Then, the density of $\mathcal{H}_{fin}^{(q,p)}$ in $\mathcal{H}^{(q,p)}$ yields the result.
\end{proof}

\end{document}